\newtheorem{theorem}{Theorem}[section]
\newtheorem{proposition}[theorem]{Proposition}
\newtheorem{definition}{Definition}[section]
\newtheorem{remark}{Remark}
\title{Difference Potentials Method for Models with Dynamic Boundary Conditions and Bulk-Surface Problems}
\author{Yekaterina Epshteyn \thanks{Department of Mathematics, The University of Utah, 155 S 1400 E Room 233, Salt Lake City, Utah 84112, USA, Email: epshteyn@math.utah.edu} \and Qing Xia \thanks{Department of Mathematics, The University of Utah, 155 S 1400 E Room 233, Salt Lake City, Utah 84112, USA, Email: xia@math.utah.edu}}
\providecommand{\keywords}[1]{\noindent\textbf{Keywords} #1}
\providecommand{\subclass}[1]{\noindent\textbf{AMS Subject Classification} #1}
\begin{document}

\maketitle

%%%%%%%%%%%%%%%%%%%%%%%%%%%%%%%%%%%%%%%%%%%%%%%%%%%%%%%%%%%%%%%%%%%%%%%%%%%%%%%%%%%%%%%%%%%%%%%%%%%%%%%%
% Abstract
%%%%%%%%%%%%%%%%%%%%%%%%%%%%%%%%%%%%%%%%%%%%%%%%%%%%%%%%%%%%%%%%%%%%%%%%%%%%%%%%%%%%%%%%%%%%%%%%%%%%%%%%
\begin{abstract}
In this work, we consider parabolic models with dynamic boundary
conditions and parabolic bulk-surface problems in 3D. Such partial
differential equations based models describe phenomena that happen
both on the surface and in the bulk/domain. These problems may appear in many
applications, ranging from cell dynamics in biology, to grain growth
models in polycrystalline materials. Using Difference Potentials
framework, we develop novel numerical algorithms for the approximation
of the problems. The constructed algorithms efficiently and
accurately handle the coupling of the models in the bulk and on the surface, approximate 3D irregular
geometry in the bulk by the use of only Cartesian meshes, employ Fast
Poisson Solvers, and utilize spectral approximation on the
surface. Several numerical tests are given to illustrate the robustness of the developed numerical algorithms.
\end{abstract}

\keywords{Dynamic boundary conditions; Bulk-surface models; Difference Potentials method; Cartesian grids; Irregular geometry; Finite difference; Spectral approximation; Spherical harmonics}

\subclass{65M06, 65M12, 65M70, 35K10}

%\tableofcontents

\section{Introduction} \label{sec:intro}

The parabolic models with dynamic boundary conditions and parabolic
bulk-surface models can be found in a variety of applications in fluid
dynamics, materials science and biological applications, see for example,
\cite{MR2455779,MR2413263,Novak_2007,Cusseddu_2018,Elliott_2017,MR3129523,liu2017energetic,MR3729587,MR3606421,MR3423226,MADZVAMUSE20169,10.1007/978-3-319-71431-8_8,MR3361677}. In
many of these problems,  partial differential equations (PDE) based models are
used to capture dynamic phenomena that occur
on the surface of the domain and in the bulk/domain. For
instance, cell polarizations can be modeled by the switches of
Rho GTPases between the active forms on the membrane (surface) and inactive
forms in the cytosol (bulk) \cite{Cusseddu_2018}. Another example
is the modeling of the receptor-ligand dynamics, 
\cite{Elliott_2017}, to name a few examples here.  

In the current literature, there are only few numerical methods
developed for such problems, and most of the methods are
finite-element-based. For instance, a novel finite element scheme is
proposed and analyzed for 3D elliptic bulk-surface problems in
\cite{Elliott_2012}, where polyhedral elements are constructed in the
bulk region, and the piecewise polynomial boundary faces serve as the
approximation of the surface. The method in \cite{Elliott_2012}
employs two finite-element spaces, one in the bulk, and one on the
surface. See also the review paper
\cite{MR3038698} on the finite element methods for PDEs on curved surfaces and the references therein. Also, space and time discretizations of 2D heat equations with dynamic boundary conditions are studied in \cite{Kov_cs_2016}, in a weak formulation that fits into the standard variational framework of parabolic problems.
A flexible unfitted finite element method (cut-FEM) is proposed for 3D
elliptic bulk-surface problems in \cite{Burman_2015}. The developed
cut-FEM utilizes the same finite element space defined on a structured
background mesh to solve the PDEs in the bulk region and on the
surface. Another space-time cut-FEM approach, with continuous linear
elements in space and discontinuous piecewise linear elements in time,
is designed for 2D parabolic bulk-surface problems on time-dependent
domains in \cite{Hansbo_2016}. Furthermore, a hybrid finite-volume-finite-element method is developed for 3D bulk-surface models in \cite{MR3717149}. The hybrid method employs a monotone nonlinear finite volume method in the bulk, and the trace finite element method \cite{MR2570076,MR3806652} is used to solve equations on the reconstructed polygonal approximation of the surface.

In this work, we develop novel numerical algorithms for 3D models with
dynamic boundary conditions and bulk-surface coupling, within the
framework of Difference Potentials method (DPM). The constructed
numerical schemes efficiently and
accurately handle the coupling of the models in the bulk and on the surface, approximate 3D irregular
geometry in the bulk by the use of only Cartesian grids, employ Fast
Poisson Solvers, and apply spectral approximation on the
surface.

The paper is organized as follows. In Section~\ref{sec:model}, we discuss the two distinct yet
related model problems that are considered in the current work, the parabolic model with dynamic boundary
condition and parabolic bulk-surface problem in 3D. Next,
in Section~\ref{sec:dpm_sd}, we develop numerical methods
based on Difference Potentials for these problems,  and give the
main steps of the constructed numerical algorithms. Lastly, in
Section~\ref{sec:numerics}, we present the extensive
numerical results (convergence, 3D views of the solutions,
  etc.) that show the robustness of the developed algorithms.

%%%%%%%%%%%%%%%%%%%%%%%%%%%%%%%%%%%%%%%%%%%%%%%%%%%%%%%%%%%%%%%%%%%%%%%%%%%%%%%%%%%%%%%%%%%%%%%%%%%%%%%%%%%
%%%%%%%%%%%%%%%%%%%%%%%%%%%%%%%%%%%%%%%%%%%%%%%%%%%%%%%%%%%%%%%%%%%%%%%%%%%%%%%%%%%%%%%%%%%%%%%%%%%%%%%%%%%
\section{The Model with Dynamic Boundary Condition and Bulk-Surface Problem} \label{sec:model}
In this work, we consider the following two models in 3D:\\
{\it Heat equation with dynamic boundary condition on the surface (see
  related examples in \cite{V_zquez_2011,Kov_cs_2016}),}
\begin{align}
    u_t-\Delta u=f,&\quad (x, y, z, t)\in \Omega\times\mathbb{R}^+,\label{eqn:dynamic-eq}\\
    u_t+u+ n\cdot\nabla u=\Delta_\Gamma u+g,&\quad (x, y, z, t)\in \Gamma\times\mathbb{R}^+,\label{eqn:dynamic-bc}\\
    u(x,y,z,0) = u_0(x, y, z),&\quad (x, y, z) \in\Omega\cup\Gamma.\label{eqn:dynamic-ic}
\end{align}
{\it The bulk-surface problem (see related examples in \cite{Hansbo_2016,Elliott_2017}),}
\begin{align}
    u_t-\Delta u=f,&\quad (x, y, z, t)\in \Omega\times\mathbb{R}^+,\label{eqn:bulk}\\
    -n\cdot \nabla u=h(u,v),&\quad (x, y, z, t)\in \Gamma\times\mathbb{R}^+,\label{eqn:coupling}\\
    v_t-\Delta_\Gamma v = g+h(u,v),&\quad (x, y, z, t)\in \Gamma\times\mathbb{R}^+,\label{eqn:surface}\\
    u(x, y, z, 0) = u_0(x, y, z),&\quad (x, y, z)\in\Omega, \label{eqn:bulk-ic}\\
    v(x,y,z,0) = v_0(x, y, z),&\quad (x, y, z) \in\Gamma.    \label{eqn:surface-ic}
\end{align}
In the above models,  $\Gamma$ is a smooth boundary/surface of a bounded domain/bulk $\Omega\subset \mathbb{R}^3$, $\Delta_\Gamma$ is the Laplace-Beltrami operator defined on $\Gamma$,  $n$ denotes the outward unit normal vector. The function $h(u,v)$ is the coupling relation between the bulk and the surface, and $g$ in (\ref{eqn:dynamic-bc}) or (\ref{eqn:surface}) is
the source function on the surface. The initial data for the model
(\ref{eqn:dynamic-eq})-(\ref{eqn:dynamic-bc}) is given by function
$u_0(x, y, z), (x, y, z) \in\Omega\cup\Gamma$ and the initial data in
(\ref{eqn:bulk})-(\ref{eqn:surface}) are given by functions  $u_0(x,
y, z),
(x, y, z)\in\Omega$ and    $v_0(x, y, z),  (x, y, z)\in\Gamma$.

\section{Algorithms Based on DPM}\label{sec:dpm_sd}

The current work is a continuation of the recent work in
\cite{AlEpshSt,MR3659255,MR3954435,MR3817808}. For the time being, we will consider the model
with dynamic boundary conditions and the bulk-surface problem in a
spherical domain, but the proposed methods can be extended to domains
with more general geometry in 3D (and the main ideas of the algorithms will stay
the same, see Remark \ref{remark:derivatives-shperical-harmonics} below). We employ a finite-difference scheme for the underlying
space discretization of the models in the bulk (\ref{eqn:dynamic-eq}) or
(\ref{eqn:bulk}), combined with the idea of Difference Potentials
Method (DPM) (\cite{Ryab} and very recent work \cite{Ryaben_kii_2006,Epsh,AlEpshSt,MR3659255,MR3954435}, etc.)  that
provides flexibility to handle irregular domains and nontrivial
boundary conditions (including, but not limited to, dynamic boundary
conditions like \eqref{eqn:dynamic-bc}, or surface equations like \eqref{eqn:surface}) accurately and efficiently.

%For applications of bulk-surface coupling in fluid dynamics or molecule biology, see \cite{MR3717149,Colli_2014,Cusseddu_2018,Egger_2018,Gross_2015,MacDonald_2016,Novak_2007}.

\subsection{The Numerical Algorithm Based on DPM}\label{sec:algorithm}
%%%%%%%%%%%%%%%%%%%%%%%%%%%%%%Feb 17 2019: started to work below
%%%%%%%%%%%%%%%%%%%%%%%%%%%%%%between Feb 17
%%%%%%%%%%%%%%%%%%%%%%%%%%%%%%marks %%%%%%%%%%%%%%%%%%%%%%
\par {\bf Discretization in the Bulk:}
\begin{figure}
    \centering
    \begin{subfigure}[b]{0.4\textwidth}
        \includegraphics[width=\textwidth]{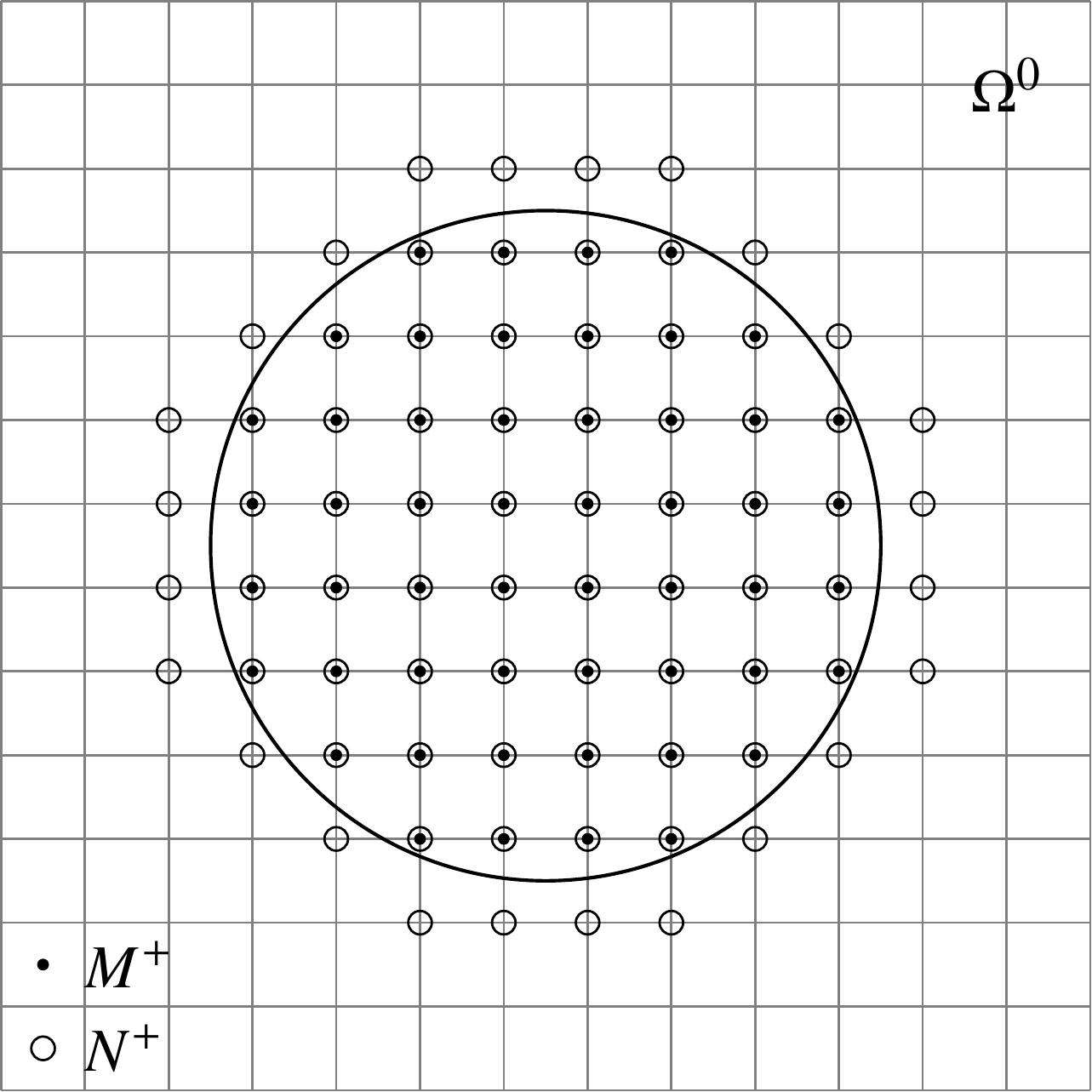}
        \caption{}
    \end{subfigure}
    ~
    \begin{subfigure}[b]{0.4\textwidth}
        \includegraphics[width=\textwidth]{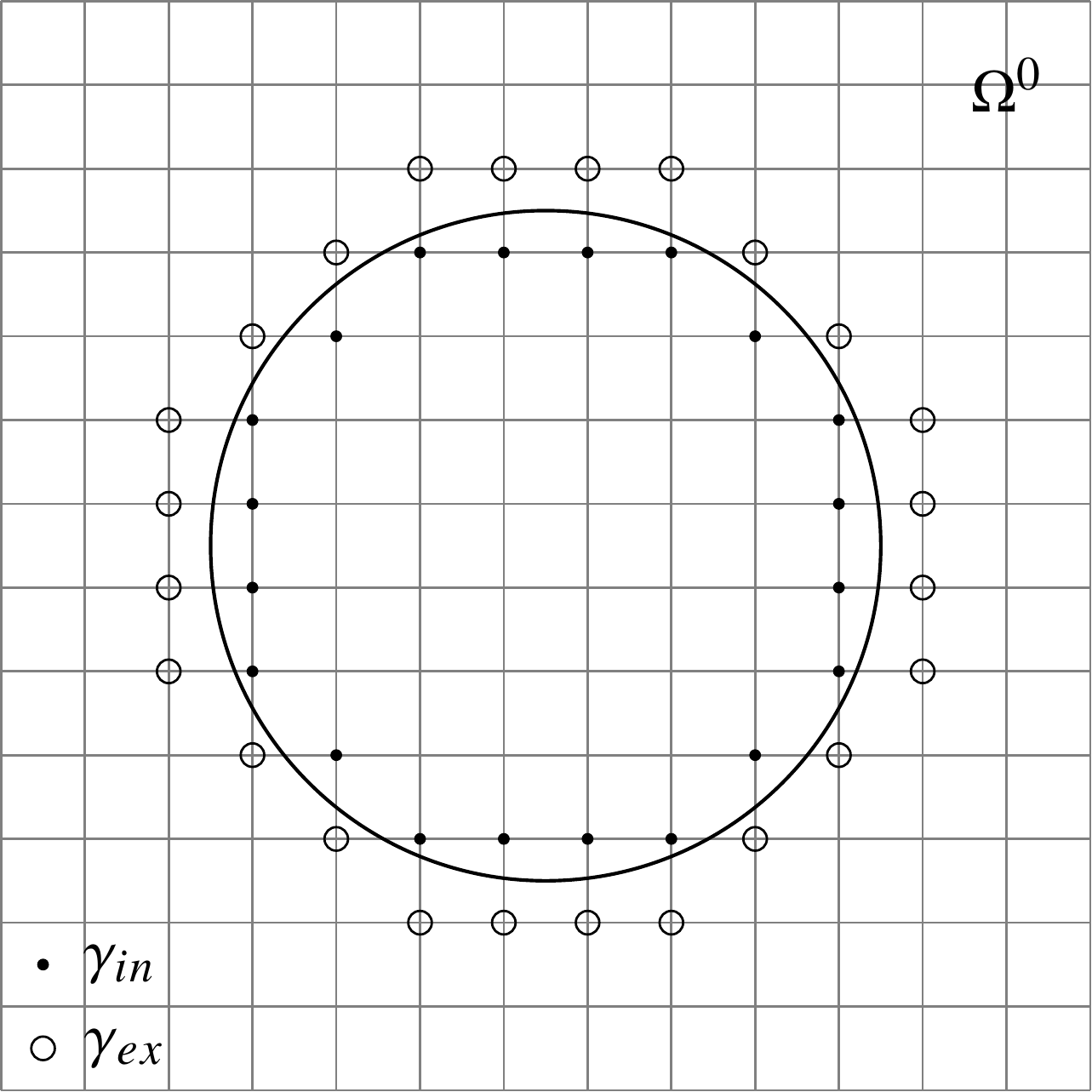}
        \caption{}
    \end{subfigure}
    \caption{Examples of point sets in the cross-sectional view: $M^+$ (solid dots)
as a subset of $N^+$ (open circles), where solid dots in open circles show the
overlap between $M^+$ and $N^+$ in the left figure; and  the discrete grid
boundary $\gamma$ as the union of $\gamma_{ex}$ (open circles) and $\gamma_{in}$
(solid dots) in the right figure. The auxiliary domain is denoted as $\Omega^0$
in both figures.}
    \label{fig:point-sets}
\end{figure}
\paragraph{Introduction of the Auxiliary Domain.} As a first step of
the numerical algorithm, we embed the original domain $\Omega$
into a computationally simple auxiliary domain
$\Omega^0\subset\mathbb{R}^3$, that we will select to be a cube in
this work. Next,
we introduce a  Cartesian mesh to discretize the auxiliary
domain $\Omega^0$, with mesh nodes $(x_j,y_k,z_l)=(x_0+j\Delta
x,y_0+k\Delta y,z_0+l\Delta z)$, ($j,k,l=0,1,2\dots,N$). Here,
$(x_0,y_0,z_0)$ is the left-bottom corner point of the cubical
auxiliary domain $\Omega^0$. For simplicity, we assume that the
Cartesian mesh is uniform, i.e., $h:=\Delta x=\Delta y=\Delta z$. To
discretize the PDE (\ref{eqn:dynamic-eq}) or
(\ref{eqn:bulk}) in the bulk, with a second order accuracy in space, we will consider the standard 7-point 
finite-difference stencil with a center placed at the point $(x_j,y_k,z_l)$:
\begin{align}\label{eqn:7-point-stencil}
\mathcal{N}_{j,k,l}^7 = \left\{(x_j,y_k,z_l),(x_{j\pm1},y_k,z_l),(x_j,y_{k\pm1},z_l),(x_j,y_k,z_{l\pm1})\right\}.
\end{align}
Next, we define the important point sets that we will use as a part of the
Difference Potentials framework (see Fig.~\ref{fig:point-sets}):\\
\begin{definition}\label{def:point_sets} 
Introduce the following point sets:\\
\begin{itemize}
    \item 

$M^0= \left\{(x_j,y_k,z_l)\mid(x_j,y_k,z_l)\in\Omega^0\right\}$
denotes the set of all mesh nodes $(x_j,y_k,z_l)$ that belong to the interior of the auxiliary domain $\Omega^0$;
    \item
      $M^+=M^0\cap\Omega=\left\{(x_j,y_k,z_l)\mid(x_j,y_k,z_l)\in\Omega\right\}$
      denotes the set of all mesh nodes $(x_j,y_k,z_l)$ that belong to the interior of the original domain $\Omega$;
    \item
    $M^-=M^0\backslash
    M^+=\{(x_j,y_k,z_l)\mid(x_j,y_k,z_l)\in\Omega^0\backslash\Omega\}$
    is the set of all mesh nodes $(x_j,y_k,z_l)$ that are inside of the auxiliary domain $\Omega^0$,  but belong to the exterior of the original domain $\Omega$;
    \item $N^+=\left\{\bigcup_{j,k,l}\mathcal{N}_{j,k,l}^{7}\mid(x_j,y_k,z_l)\in M^+\right\}$;
    \item $N^-=\left\{\bigcup_{j,k,l}\mathcal{N}_{j,k,l}^{7}\mid(x_j,y_k,z_l)\in M^-\right\}$;
    \item
      $N^0=\left\{\bigcup_{j,k,l}\mathcal{N}_{j,k,l}^{7}\mid(x_j,y_k,z_l)\in
        M^0\right\}$;\\ The point sets $N^\pm$ and $N^0$ are the sets
      of all mesh nodes covered by the stencil $\mathcal{N}^7_{j,k,l}$
      for every mesh node $(x_j,y_k,z_l)$ in $M^\pm$ and $M^0$ respectively;
    \item $\gamma=N^+\cap N^-$ defines a thin layer of mesh nodes that straddles the continuous boundary $\Gamma$ and is called the discrete grid boundary;
    \item $\gamma_{in}=M^+\cap\gamma$ and $\gamma_{ex}=M^-\cap\gamma$ are subsets of the discrete grid boundary that lie inside and outside of the spherical domain $\Omega$ respectively.
\end{itemize}
\end{definition}

\paragraph{Construction of the System of Discrete Equations for Models (\ref{eqn:dynamic-eq}) and (\ref{eqn:bulk}).} 
In this work, we will use the trapezoidal time stepping (Crank-Nicolson scheme) to
illustrate the approach based on Difference Potentials for the models with
dynamic boundary conditions and for the bulk-surface problems. In general, any
other stable
time marching scheme can be employed in a similar way. 

For the spatial discretization, we will employ the second-order finite-difference scheme using the 7-point stencil $\mathcal{N}_{j,k,l}^7$ as
defined above. Assume now, that $u^{i}_{j, k, l}$ denotes a discrete solution
computed at the time level $t^i$ at the mesh node $(x_j, y_k, z_l)$. Then, the
discrete system of equations  for (\ref{eqn:dynamic-eq}) and
(\ref{eqn:bulk}) obtained using trapezoidal time
approximation combined with the second-order central finite-difference
approximation in space is,
\begin{align}\label{fully-discrete-cn}
L_{h,\Delta
  t} u^{i+1}_{j,k,l}=F^{i+1}_{j, k, l},&\quad (x_j,y_k,z_l)\in M^+,
\end{align}
where, we introduced the discrete linear difference operator $L_{h,\Delta
  t}\equiv \Delta_h-\sigma I$ with $\sigma=2/\Delta t$, $\Delta_h$--the discrete Laplace operator defined on point set $M^+$, $I$--the identity matrix of the same size as $\Delta_h$, the right-hand side function
$F^{i+1}_{j, k, l}\equiv-(\Delta_h+\sigma I) u^i_{j,k,l}-f^{i+1}_{j,k,l}-f^{i}_{j,k,l}$, and $u^{i+1}_{j,k,l}\approx u(x_j,y_k,z_l,t^{i+1})$.
\paragraph{The Discrete Auxiliary Problem (AP).}
One of the important steps of DPM-based methods is the introduction of
the auxiliary problem (AP). The discrete APs play a key role in construction of the \textit{Particular Solution} and the
\textit{Difference Potentials} operators as a part of DPM-based algorithm proposed in this work.
\begin{definition}
At time $t^{i+1}$, given the grid function $q^{i+1}$ on $M^0$, the following difference equations~\eqref{eqn:discrete_ap}--\eqref{eqn:discrete_ap_boundary} are defined as the discrete Auxiliary Problem (AP):
\begin{align}
L_{h,\Delta t}w_{j,k,l}^{i+1}&=q^{i+1}_{j,k,l},\quad(x_j,y_k,z_l)\in M^0,\label{eqn:discrete_ap}\\
w^{i+1}_{j,k,l}&=0,\quad(x_j,y_k,z_l)\in N^0\backslash M^0.\label{eqn:discrete_ap_boundary}
\end{align}
\end{definition}
Here, the discrete linear operator $L_{h,\Delta t}=\Delta_h-\sigma I$
is the linear operator similar to the one introduced in \eqref{fully-discrete-cn}, but is defined now on a larger point set $M^0$.

\begin{remark}
The homogeneous Dirichlet boundary condition~\eqref{eqn:discrete_ap_boundary} in the AP is chosen merely for efficiency of our algorithm, i.e. we employ Fast Poisson Solvers to solve the APs. In general, other boundary conditions can be selected for the AP as long as the defined AP is well-posed and can be solved computationally efficiently.
\end{remark}

\paragraph{Construction of the Particular Solution.} Let us denote by
$G_{h,\Delta t}F_{j,k,l}^{i+1},\;(x_j,y_k,z_l)\in N^+$, the {\it
  Particular Solution} of the fully
discrete problem~\eqref{fully-discrete-cn}.  The Particular Solution is defined on $N^+$ at time level $t^{i+1}$, and is obtained by solving the AP \eqref{eqn:discrete_ap}--\eqref{eqn:discrete_ap_boundary} with the following right hand side:
\begin{align}\label{rhs:particular_solution}
q^{i+1}_{j,k,l}=\left\{
\begin{array}{ll}
F_{j,k,l}^{i+1},&\quad(x_j,y_k,z_l)\in M^+,\\
0,& \quad(x_j,y_k,z_l)\in M^-,
\end{array}
\right.
\end{align}
and by restricting the computed solution from $N^0$ to $N^+$.

\paragraph{Construction of the Difference Potentials and Boundary
  Equations with Projections.} To construct the Difference Potentials, let us first define a linear space $W_{\gamma}$ of all grid functions $w^{i+1}_{\gamma} (x_j,y_k, z_l)$ at $t^{i+1}$ on $\gamma$. The functions are extended by zero to other points in $N^0$ set. These grid functions $w^{i+1}_{\gamma}$ are  called densities on the discrete grid boundary $\gamma$ at the time level $t^{i+1}$.
\begin{definition}\label{def:difference-potentials}
The {\it Difference Potential} associated with a given density $w^{i+1}_{\gamma}\in W_{\gamma}$ is the grid function $P_{N^+\gamma}w_{\gamma}^{i+1}$ defined on $N^+$ at the time level $t^{i+1}$, and is obtained by solving the AP~\eqref{eqn:discrete_ap}--\eqref{eqn:discrete_ap_boundary} with the following right hand side:
\begin{align}\label{rhs:difference_potentials}
q^{i+1}_{j,k,l}&=\left\{
\begin{array}{ll}
0,&\quad(x_j,y_k,z_l)\in M^+,\\
L_{h,\Delta t}[w^{i+1}_{\gamma}],&\quad(x_j,y_k,z_l)\in M^-,
\end{array}
\right.
\end{align}
\end{definition}
and by restricting the solution from $N^0$ to $N^+$.

Next, we will introduce the trace operator.  Given a grid function $w^{i+1}$ defined on the point set $N^+$, we denote by $Tr_{\gamma}w^{i+1}$ the trace or restriction of $w^{i+1}$ from $N^+$ to the discrete grid boundary $\gamma$. Similarly, we define $Tr_{\gamma_{in}}w^{i+1}$ as the trace or restriction of $w^{i+1}$ from $N^+$ to  $\gamma_{in}\subset \gamma$.  We are ready to define an operator $P_{\gamma}:W_{\gamma}\rightarrow W_{\gamma}$ such that $P_{\gamma}w^{i+1}_{\gamma}:=Tr_{\gamma}P_{N^+\gamma}w^{i+1}_{\gamma}$. The operator $P_{\gamma}$ is a projection operator. Now, we will state the key theorem for Difference Potentials Method, which allows us to reformulate the difference equation \eqref{fully-discrete-cn} defined on $M^+$ into equivalent {\it Boundary Equations with Projections} (BEP) defined on the discrete grid boundary $\gamma$ only.
\begin{theorem}[Boundary Equations with Projections (BEP)]\label{thm:full_BEP}
At time $t^{i+1}$, the discrete density $u^{i+1}_{\gamma}$ is the trace of some solution $u^{i+1}$ on $N^+$ to the difference equation~\eqref{fully-discrete-cn}, i.e. $u^{i+1}_{\gamma}:=Tr_{\gamma}u^{i+1}$, if and only if the following BEP holds:
\begin{align}\label{eqn:full_BEP}
    u^{i+1}_{\gamma}-P_{\gamma}u^{i+1}_{\gamma}=G_{h,\Delta t}F^{i+1}_{\gamma}, \quad  (x_j, y_k, z_l)\in \gamma,
\end{align}
where $G_{h,\Delta t}F^{i+1}_{\gamma}:=Tr_{\gamma}G_{h,\Delta t}F^{i+1}_{j, k, l}$ is the trace of the Particular Solution on the discrete grid boundary $\gamma$.
\end{theorem}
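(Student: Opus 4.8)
The plan is to prove both implications at once by first establishing a discrete analogue of the Calderón/Green identity: any grid function $u^{i+1}$ on $N^+$ is recovered from its ``interior residual'' $L_{h,\Delta t}u^{i+1}$ on $M^+$ together with its trace $Tr_\gamma u^{i+1}$, via the Particular Solution operator $G_{h,\Delta t}$ and the Difference Potential $P_{N^+\gamma}$. Everything then follows from linearity of the discrete Auxiliary Problem and uniqueness of its solution. Preliminarily, I would record two facts. First, the AP \eqref{eqn:discrete_ap}--\eqref{eqn:discrete_ap_boundary} is well posed: since $L_{h,\Delta t}=\Delta_h-\sigma I$ with $\sigma=2/\Delta t>0$, the homogeneous Dirichlet conditions on $N^0\setminus M^0$ give an invertible (shifted discrete Laplacian) system, so the solution operator $q^{i+1}\mapsto w^{i+1}$ is well defined and linear. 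Second, stencil locality: if $(x_j,y_k,z_l)\in M^+$ then $\mathcal N_{j,k,l}^7\subset N^+$ by the definition of $N^+$; if $(x_j,y_k,z_l)\in M^-$ then $\mathcal N_{j,k,l}^7\subset N^-$, and $N^+\cap N^-=\gamma$; and, under the standing assumption that $\Omega^0$ is taken large enough (at least one mesh layer of padding around $\Omega$), $N^+\subset M^0$, so the zero extension to $N^0$ of any grid function on $N^+$ vanishes on $N^0\setminus M^0$.

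For necessity ($\Rightarrow$): let $u^{i+1}$ on $N^+$ solve \eqref{fully-discrete-cn}, and let $\tilde u$ be its extension by zero to $N^0$. Applying $L_{h,\Delta t}$ on $M^0$ and using locality, the resulting right-hand side equals $L_{h,\Delta t}u^{i+1}=F^{i+1}$ on $M^+$ (values there involve only $N^+$ nodes), and on $M^-$ it equals $L_{h,\Delta t}[u^{i+1}_\gamma]$ where $u^{i+1}_\gamma:=Tr_\gamma u^{i+1}$ is extended by zero (values there involve only $N^-$ nodes, and $\tilde u$ agrees with that extension on $N^-$). Since also $\tilde u=0$ on $N^0\setminus M^0$, the function $\tilde u$ solves the AP with that right-hand side; by uniqueness it \emph{is} the AP solution. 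Splitting the right-hand side into its $M^+$-supported and $M^-$-supported parts and invoking linearity of the AP solution operator, the restriction of $\tilde u$ to $N^+$ decomposes, by comparison with \eqref{rhs:particular_solution} and \eqref{rhs:difference_potentials}, as $u^{i+1}=G_{h,\Delta t}F^{i+1}_{j,k,l}+P_{N^+\gamma}u^{i+1}_\gamma$ on $N^+$. Taking $Tr_\gamma$ and rearranging yields exactly \eqref{eqn:full_BEP}.

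For sufficiency ($\Leftarrow$): given $u^{i+1}_\gamma\in W_\gamma$ satisfying \eqref{eqn:full_BEP}, define $u^{i+1}:=G_{h,\Delta t}F^{i+1}_{j,k,l}+P_{N^+\gamma}u^{i+1}_\gamma$ on $N^+$. Applying $Tr_\gamma$ and using \eqref{eqn:full_BEP} gives $Tr_\gamma u^{i+1}=G_{h,\Delta t}F^{i+1}_\gamma+P_\gamma u^{i+1}_\gamma=u^{i+1}_\gamma$, so $u^{i+1}$ has the prescribed trace. To see it solves \eqref{fully-discrete-cn}, apply $L_{h,\Delta t}$ at a node of $M^+$: by locality the result involves only $N^+$-values, and since $G_{h,\Delta t}F^{i+1}_{j,k,l}$ and $P_{N^+\gamma}u^{i+1}_\gamma$ are the $N^+$-restrictions of AP solutions whose right-hand sides are $F^{i+1}$ and $0$ respectively on $M^+$, one obtains $L_{h,\Delta t}u^{i+1}=F^{i+1}$ on $M^+$, as required.

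The main obstacle is the bookkeeping in the necessity direction: one must verify carefully that $\tilde u$ is genuinely zero on $N^-\setminus\gamma$ (so that only the $\gamma$-density survives when $L_{h,\Delta t}$ is evaluated at $M^-$, matching \eqref{rhs:difference_potentials}), that stencils centered in $M^+$ never reach outside $N^+$ (so the $M^+$-part matches \eqref{rhs:particular_solution}), and that $N^+\subset M^0$ so the homogeneous Dirichlet condition of the AP is consistent with the zero extension. Once these set-theoretic/locality facts are pinned down, the remainder is just linearity and uniqueness of the AP.
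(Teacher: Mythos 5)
Your proof is correct and is essentially the classical difference-potentials argument (the discrete Green/Calder\'on identity $u^{i+1}=P_{N^+\gamma}u^{i+1}_{\gamma}+G_{h,\Delta t}F^{i+1}$ obtained from linearity and uniqueness of the AP, plus stencil locality), which is exactly the argument of the references \cite{Ryab,MR3954435} that the paper cites in lieu of giving its own proof. The one standing hypothesis you isolate, $N^+\subset M^0$ (so the zero extension is compatible with the AP's Dirichlet condition on $N^0\setminus M^0$), is indeed needed and is satisfied by the paper's auxiliary cube, which pads the sphere by $R/5$ on each side.
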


\begin{proof}
See  \cite{Ryab} or \cite{MR3954435}.\qed
\end{proof}

\begin{remark}
Note, using that Difference Potential is a linear operator, we can recast (\ref{eqn:full_BEP}) as
\begin{align}\label{eqn:full_BEP1}
    u^{i+1}_{m}-\sum_{\mathfrak{n}\in \gamma}A_{\mathfrak{n}m} u^{i+1}_{\mathfrak{n}}=G_{h,\Delta t}F^{i+1}_m, \quad m\in \gamma,
\end{align}
where $m$ is the index of a grid point in the set $\gamma$, and $G_{h,\Delta t}F^{i+1}_m$ is the value of the Particular Solution at the grid point with index $m$ in the set $\gamma$.
\end{remark}

\begin{proposition}\label{prop:rank}
The rank of linear equations in BEP~\eqref{eqn:full_BEP} is $|\gamma_{in}|$, which is the cardinality of the point set $\gamma_{in}$.
\end{proposition}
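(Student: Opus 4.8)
The plan is to realize the system \eqref{eqn:full_BEP} as the linear map $I-P_\gamma$ acting on the density space $W_\gamma$ and to compute its rank by the rank-nullity theorem, after identifying its kernel with a space of discrete homogeneous solutions whose dimension turns out to be $|\gamma_{ex}|$.

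First I would record two set-theoretic facts implicit in Definition~\ref{def:point_sets}. Since $\Omega$ lies strictly inside the auxiliary cube $\Omega^0$ with at least one mesh cell of margin, every $7$-point stencil centered at a node of $M^+$ is contained in $M^0$, so $N^+\subseteq M^0=M^+\sqcup M^-$; consequently $\gamma=N^+\cap N^-\subseteq M^0$ is the disjoint union of $\gamma_{in}=M^+\cap\gamma$ and $\gamma_{ex}=M^-\cap\gamma$, whence $\dim W_\gamma=|\gamma|=|\gamma_{in}|+|\gamma_{ex}|$. Moreover, a node of $N^+$ that is not in $M^+$ lies in $M^-$ and, being simultaneously covered by an $M^+$-stencil and by its own $M^-$-stencil, lies in $\gamma$; hence $N^+\setminus M^+=\gamma_{ex}$, i.e. $N^+=M^+\sqcup\gamma_{ex}$.

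Next I would identify the kernel of $I-P_\gamma$. Applying Theorem~\ref{thm:full_BEP} with $F^{i+1}_{j,k,l}\equiv 0$ (so that $G_{h,\Delta t}F^{i+1}_{\gamma}=0$ by uniqueness of the AP), a density $v_\gamma\in W_\gamma$ lies in $\ker(I-P_\gamma)$ if and only if $v_\gamma=Tr_\gamma u$ for some grid function $u$ on $N^+$ with $L_{h,\Delta t}u=0$ on $M^+$. Thus $\ker(I-P_\gamma)=Tr_\gamma\big(\mathcal{H}\big)$ with $\mathcal{H}:=\{u\ \text{on}\ N^+:\ L_{h,\Delta t}u=0\ \text{on}\ M^+\}$. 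Using $N^+=M^+\sqcup\gamma_{ex}$, prescribing $u$ on $\gamma_{ex}$ reduces $L_{h,\Delta t}u=0$ on $M^+$ to a square system for the values of $u$ on $M^+$, whose matrix is the restriction of $L_{h,\Delta t}=\Delta_h-\sigma I$ to $M^+$ with homogeneous Dirichlet data on $\gamma_{ex}$; since $\sigma=2/\Delta t>0$ this matrix is strictly diagonally dominant, hence invertible. Therefore $\mathcal{H}$ is parametrized bijectively by the data on $\gamma_{ex}$, so $\dim\mathcal{H}=|\gamma_{ex}|$, and the trace map $u\mapsto Tr_\gamma u$ is injective on $\mathcal{H}$ (its $\gamma_{ex}$-component already recovers $u$). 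Hence $\dim\ker(I-P_\gamma)=|\gamma_{ex}|$.

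Finally, by rank-nullity the rank of \eqref{eqn:full_BEP} equals $\dim W_\gamma-\dim\ker(I-P_\gamma)=|\gamma|-|\gamma_{ex}|=|\gamma_{in}|$, as claimed. (Alternatively, since $P_\gamma$ is a projection, $I-P_\gamma$ is the complementary projection, so $\operatorname{rank}(I-P_\gamma)=\dim W_\gamma-\operatorname{rank}(P_\gamma)$ with $\operatorname{rank}(P_\gamma)=\dim\operatorname{Im}(P_\gamma)=\dim\operatorname{Fix}(P_\gamma)=|\gamma_{ex}|$ by the same computation.) I expect the only genuinely substantive step to be the dimension count for $\mathcal{H}$---namely the structural identity $N^+=M^+\sqcup\gamma_{ex}$ together with the invertibility of the interior operator obtained by restricting $L_{h,\Delta t}$ to $M^+$; everything else reduces to an appeal to Theorem~\ref{thm:full_BEP} and elementary linear algebra.
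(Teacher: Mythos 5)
Your proposal is correct and follows essentially the same route as the paper's proof: both arguments parametrize the solution set of the BEP~\eqref{eqn:full_BEP} by the density values on $\gamma_{ex}$ (via unique solvability of the interior difference equations once the exterior data are prescribed) and then conclude the rank is $|\gamma|-|\gamma_{ex}|=|\gamma_{in}|$. You merely supply more detail than the paper does, namely the identity $N^+=M^+\sqcup\gamma_{ex}$, the strict diagonal dominance of $\Delta_h-\sigma I$ guaranteeing unique solvability, and the explicit rank--nullity bookkeeping.
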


\begin{proof}
The proof follows the lines of the proof in \cite{Ryab,MR3954435},
and we will present it below for reader's convenience. If the density
$u^{i+1}_{\gamma_{ex}}$ on $\gamma_{ex}$ to the difference
equation~\eqref{fully-discrete-cn} is given, then such discrete system
will admit a unique solution $u^{i+1}_{j, k, l}$ defined on a set
$N^{+}$. Hence, the BEP~\eqref{eqn:full_BEP} will have a unique
solution, if $u^{i+1}_{\gamma_{ex}}$ is given. Thus,  the solution $u^{i+1}_{\gamma}$ to BEP~\eqref{eqn:full_BEP} has dimension $|\gamma_{ex}|$, which is the cardinality of set $\gamma_{ex}$. As a consequence, the BEP \eqref{eqn:full_BEP} has rank $|\gamma|-|\gamma_{ex}|=|\gamma_{in}|$.\qed
\end{proof}

Next, we introduce the reduced BEP \eqref{eqn:reduced_BEP} defined only
on $\gamma_{in}$ that can be shown to be equivalent to the BEP~\eqref{eqn:full_BEP} defined on $\gamma$.
\begin{theorem}\label{thm:reduced_BEP}
The BEP~\eqref{eqn:full_BEP} defined on $\gamma$ in Theorem~\ref{thm:full_BEP} is equivalent to the following BEP~\eqref{eqn:reduced_BEP} defined on a smaller subset $\gamma_{in}\subset \gamma$:
\begin{align}\label{eqn:reduced_BEP}
    u^{i+1}_{\gamma_{in}}-Tr_{\gamma_{in}}P_{\gamma}u_{\gamma}^{i+1}=Tr_{\gamma_{in}}G_{h,\Delta t}F^{i+1}_{\gamma}, \quad (x_j, y_k, z_l)\in \gamma_{in}
\end{align}
Moreover, the reduced BEP~\eqref{eqn:reduced_BEP} contains only linearly independent equations.
\end{theorem}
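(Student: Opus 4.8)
The plan is to establish the equivalence of the two systems, that is, the equality of their solution sets in $W_{\gamma}$, and then to deduce the linear independence of the $|\gamma_{in}|$ equations of \eqref{eqn:reduced_BEP} from a dimension count built on Proposition~\ref{prop:rank}. One inclusion is trivial: since \eqref{eqn:reduced_BEP} is literally the restriction of \eqref{eqn:full_BEP} to the nodes of $\gamma_{in}\subset\gamma$, every density solving the full BEP solves the reduced BEP. The content is the converse (together with the rank statement), and throughout I use the partition $\gamma=\gamma_{in}\sqcup\gamma_{ex}$ and the fact $N^+=M^+\cup\gamma_{ex}$.

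For the converse, let $u^{i+1}_{\gamma}\in W_{\gamma}$ solve \eqref{eqn:reduced_BEP} and put $u^{i+1}:=P_{N^+\gamma}u^{i+1}_{\gamma}+G_{h,\Delta t}F^{i+1}_{j,k,l}$ on $N^+$. By Definition~\ref{def:difference-potentials} and the construction of the Particular Solution, $L_{h,\Delta t}[P_{N^+\gamma}u^{i+1}_{\gamma}]=0$ and $L_{h,\Delta t}[G_{h,\Delta t}F^{i+1}_{j,k,l}]=F^{i+1}_{j,k,l}$ at every node of $M^+$: the $7$-point stencil centered in $M^+$ stays inside $N^+$, so these identities are inherited from the $N^0$-solutions of the AP \eqref{eqn:discrete_ap}--\eqref{eqn:discrete_ap_boundary}. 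Hence $L_{h,\Delta t}u^{i+1}=F^{i+1}_{j,k,l}$ on $M^+$, i.e. $u^{i+1}$ solves \eqref{fully-discrete-cn}, so by Theorem~\ref{thm:full_BEP} $Tr_{\gamma}u^{i+1}$ solves \eqref{eqn:full_BEP}; and by the definitions of $P_{\gamma}$ and of $G_{h,\Delta t}F^{i+1}_{\gamma}$ and linearity of the trace, $Tr_{\gamma}u^{i+1}=P_{\gamma}u^{i+1}_{\gamma}+G_{h,\Delta t}F^{i+1}_{\gamma}$. It therefore suffices to show $u^{i+1}_{\gamma}=Tr_{\gamma}u^{i+1}$ on all of $\gamma$, for then $u^{i+1}_{\gamma}$ itself satisfies \eqref{eqn:full_BEP}.

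To prove that identity, let $\widetilde D:=S-u^{i+1}_{\gamma}$ on $N^0$, where $u^{i+1}_{\gamma}$ is extended by zero and $S$ is the $N^0$-AP solution with right-hand side $F^{i+1}_{j,k,l}$ on $M^+$ and $L_{h,\Delta t}[u^{i+1}_{\gamma}]$ on $M^-$; by linearity of the AP, $u^{i+1}=S|_{N^+}$. A direct stencil computation gives $L_{h,\Delta t}\widetilde D=0$ on $M^-$ and $\widetilde D=0$ on $N^0\setminus M^0$; moreover $\widetilde D=0$ on $\gamma_{in}$, because on $\gamma_{in}$ equation \eqref{eqn:reduced_BEP} combined with $Tr_{\gamma}u^{i+1}=P_{\gamma}u^{i+1}_{\gamma}+G_{h,\Delta t}F^{i+1}_{\gamma}$ reads precisely $Tr_{\gamma_{in}}u^{i+1}=u^{i+1}_{\gamma_{in}}$. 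Since the stencil neighbours of $M^-$ lying outside $M^-$ belong to $\gamma_{in}\cup(N^0\setminus M^0)$, the restriction $\widetilde D|_{M^-}$ solves the homogeneous system $L_{h,\Delta t}\widetilde D=0$ on $M^-$ with zero data on $N^-\setminus M^-$; as $L_{h,\Delta t}=\Delta_h-\sigma I$ with $\sigma=2/\Delta t>0$ is strictly diagonally dominant (and so is the relevant principal submatrix), this matrix is nonsingular, whence $\widetilde D=0$ on $M^-$, in particular on $\gamma_{ex}\subset M^-$. Thus $\widetilde D\equiv0$ on $\gamma$, i.e. $u^{i+1}_{\gamma}=Tr_{\gamma}u^{i+1}$, which gives the equivalence. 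For the linear independence, Theorem~\ref{thm:full_BEP} makes \eqref{eqn:full_BEP} consistent (prescribe arbitrary data on $\gamma_{ex}$, solve \eqref{fully-discrete-cn} on $N^+$, and take the trace), and Proposition~\ref{prop:rank} gives it rank $|\gamma_{in}|$, so its affine solution set has dimension $|\gamma|-|\gamma_{in}|=|\gamma_{ex}|$; by the equivalence the solution set of \eqref{eqn:reduced_BEP} is that same set, and since \eqref{eqn:reduced_BEP} consists of $|\gamma_{in}|$ equations in $|\gamma|$ unknowns, its rank must equal $|\gamma|-|\gamma_{ex}|=|\gamma_{in}|$, i.e. all $|\gamma_{in}|$ equations are linearly independent.

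I expect the only genuinely non-routine step to be $\widetilde D=0$ on $\gamma_{ex}$. It rests on the precise local geometry of the point sets near $\Gamma$ — that $N^+=M^+\cup\gamma_{ex}$, that the outward stencil fringe of $M^-$ meets $M^+$ only in $\gamma_{in}$, and that $L_{h,\Delta t}$ restricted to a subdomain with Dirichlet-type fringe data is invertible — rather than on any soft functional-analytic fact. Everything else is bookkeeping with the AP, the Difference Potential, the trace operators, and a rank/dimension count anchored by Proposition~\ref{prop:rank}.
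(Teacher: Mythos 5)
Your proof is correct and follows essentially the same route as the paper: your auxiliary function $\widetilde D$ coincides (by linearity of the AP) with the paper's $\Phi^{i+1}=P^{i+1}+G^{i+1}-u^{i+1}_{\gamma}$, and the key step — that the homogeneous problem $L_{h,\Delta t}\widetilde D=0$ on $M^-$ with zero data on $\gamma_{in}\cup(N^0\setminus M^0)$ forces $\widetilde D=0$ on $\gamma_{ex}$ — together with the rank count from Proposition~\ref{prop:rank} is exactly the paper's argument. The only addition is your explicit justification of uniqueness via strict diagonal dominance of $\Delta_h-\sigma I$ (with $\sigma=2/\Delta t>0$), which the paper asserts without proof.
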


\begin{proof}
The proof follows the lines of the proof in \cite{Ryab,MR3954435}
and we will present it below for reader's convenience. First, define the grid function:
\begin{align}\label{eqn:grid-function}
\Phi^{i+1}:=P^{i+1}+G^{i+1}-u^{i+1}_{\gamma},\quad \mbox{on } N^0,
\end{align}
where $P^{i+1}$ is a solution to the AP~\eqref{eqn:discrete_ap}--\eqref{eqn:discrete_ap_boundary} on $N^0$ with right hand side \eqref{rhs:difference_potentials} using density $u^{i+1}_{\gamma}$, $G^{i+1}$ is a solution to the AP~\eqref{eqn:discrete_ap}--\eqref{eqn:discrete_ap_boundary} on $N^0$ with right hand side~\eqref{rhs:particular_solution}, and $u^{i+1}_\gamma$ is extended from $\gamma$ to $N^0$ by zero. By the construction of $\Phi^{i+1}$, one can see that $\Phi^{i+1}$ is a solution to the following difference equation:
\begin{equation}\label{eqn:full-laplace-equation}
\begin{aligned}
L_{h,\Delta t}[\Phi^{i+1}]
&=\left\{
\begin{array}{ll}
F^{i+1}-L_{h,\Delta t}[u^{i+1}_{\gamma}],&\quad\mbox{on } M^+,\\
0, & \quad\mbox{on }M^-.
\end{array}
\right.
\end{aligned}
\end{equation}
Therefore, we conclude that $\Phi^{i+1}$ solves the following homogeneous
difference equations on the set $M^-$:
\begin{align}\label{eqn:laplace-on-Mm}
L_{h,\Delta t}\Phi^{i+1}=0,\quad \mbox{on } M^-.
\end{align}
Additionally, by construction of functions $\Phi^{i+1},P^{i+1}$ and $G^{i+1}$, the grid function $\Phi^{i+1}$ satisfies the following boundary condition:
\begin{align}\label{eqn:bc}
\Phi^{i+1}=0,\quad \mbox{on } N^0\backslash M^0.
\end{align}

Next, observe that the BEP~\eqref{eqn:full_BEP} and the reduced BEP~\eqref{eqn:reduced_BEP} can be reformulated using grid function $\Phi^{i+1}$ in \eqref{eqn:grid-function} as follows:
\begin{align}\label{eqn:equiv-full-BEP}
\Phi^{i+1}=0, \quad\mbox{on }\gamma, \quad(\mbox{BEP }\eqref{eqn:full_BEP}),
\end{align}
and
\begin{align}\label{eqn:equiv-reduced-BEP}
\Phi^{i+1}=0, \quad\mbox{on }\gamma_{in},\quad (\mbox{BEP }\eqref{eqn:reduced_BEP}).
\end{align}
Hence, it is enough to show that \eqref{eqn:equiv-full-BEP} is equivalent to \eqref{eqn:equiv-reduced-BEP} to prove the equivalence between the BEP~\eqref{eqn:full_BEP} and the reduced BEP~\eqref{eqn:reduced_BEP}. First, note that if \eqref{eqn:equiv-full-BEP} is true, then \eqref{eqn:equiv-reduced-BEP} is obviously satisfied. 

Now, assume that \eqref{eqn:equiv-reduced-BEP} is true and let us show that \eqref{eqn:equiv-full-BEP} holds. Consider problem~\eqref{eqn:laplace-on-Mm}: $L_{h,\Delta t}\Phi^{i+1}=0$ on $M^-$, subject to boundary conditions \eqref{eqn:bc} and \eqref{eqn:equiv-reduced-BEP}, since the set $\gamma_{in}\cup(N^0\backslash M^0)$ is the boundary set for set $M^-$. Then we have the following discrete boundary value problem:
\begin{align}\label{eqn:discrete-bvp}
L_{h,\Delta t}\Phi^{i+1}&=0,\quad\mbox{on }M^-,\\
\Phi^{i+1}&=0,\quad \mbox{on }N^0\backslash M^0,\\
\Phi^{i+1}&=0,\quad \mbox{on }\gamma_{in},
\end{align}
which admits a unique zero solution: $\Phi^{i+1}=0$ on $M^-$. Since $\gamma_{ex}\subset M^-$, we conclude that $\Phi^{i+1}=0$ on $\gamma_{ex}$, as well as on $\gamma\equiv\gamma_{ex}\cup\gamma_{in}$, which shows that \eqref{eqn:equiv-reduced-BEP} implies \eqref{eqn:equiv-full-BEP}. 

Thus, we showed that \eqref{eqn:equiv-full-BEP} is equivalent to \eqref{eqn:equiv-reduced-BEP}, and therefore, BEP~\eqref{eqn:full_BEP} is equivalent to the reduced BEP~\eqref{eqn:reduced_BEP}. Moreover, due to Proposition~\ref{prop:rank}, the reduced BEP~\eqref{eqn:reduced_BEP} consists of only linearly independent equations.
\qed
\end{proof}

Similarly to (\ref{eqn:full_BEP})-(\ref{eqn:full_BEP1}), the reduced BEP~\eqref{eqn:reduced_BEP} can be recast as 
\begin{align}\label{eqn:reduced_BEP1}
    u^{i+1}_{m}-\sum_{\mathfrak{n}\in \gamma}A_{\mathfrak{n}m} u^{i+1}_{\mathfrak{n}}=G_{h,\Delta t}F^{i+1}_m, \quad m\in \gamma_{in}.
\end{align}

\begin{remark}
The BEP~\eqref{eqn:full_BEP} or \eqref{eqn:reduced_BEP} reduces degrees of freedom from $\mathcal{O}(h^{-3})$ in the difference equation \eqref{fully-discrete-cn} to $\mathcal{O}(h^{-2})$. In addition, the reduced BEP~\eqref{eqn:reduced_BEP} defined on $\gamma_{in}$ reduces the number of equations in BEP \eqref{eqn:full_BEP} by approximately one half, since $|\gamma_{in}|\approx|\gamma|/2$. Thus, using the reduced BEP~\eqref{eqn:reduced_BEP} will further improve the computational cost in our numerical algorithm, especially in 3D, and we will use the reduced BEP as a part of the proposed numerical algorithm.
\end{remark}
Additionally, let us note that the BEP~\eqref{eqn:full_BEP} or the
BEP~\eqref{eqn:reduced_BEP} will admit multiple solutions since the
system of equations \eqref{eqn:full_BEP} (and hence~\eqref{eqn:reduced_BEP}) is equivalent to the system of
difference equations \eqref{fully-discrete-cn} without imposed
boundary conditions yet. Therefore, to construct a unique solution to
BEP~\eqref{eqn:reduced_BEP}, we need to supply the BEP
\eqref{eqn:reduced_BEP} with either the dynamic boundary
condition~\eqref{eqn:dynamic-bc}, or the coupling 
conditions on the surface~\eqref{eqn:coupling}-\eqref{eqn:surface}. To
impose these conditions efficiently into BEP, we will introduce the
extension operator \eqref{eqn:extension_operator} and combine \eqref{eqn:extension_operator} with the
spectral approach discussed below for the approximation of the boundary
conditions/surface equations.\\
\begin{definition}
The extension operator $\pi_{\gamma \Gamma}[u^{i+1}]$ of the function $u(x,y,z,t^{i+1})$ from a point $(x,y,z)\in\Gamma$ to $(x_j,y_k,z_l)\in\gamma$ is defined as:
\begin{align}\label{eqn:extension_operator}
    \pi_{\gamma \Gamma}[u^{i+1}]|_{(x_j, y_k,z_l)}:=u^{i+1}(x,y,z)|_{\Gamma}+\left.d\frac{\partial u^{i+1}(x,y,z)}{\partial n}\right|_{\Gamma}+\left.\frac{d^2}{2}\frac{\partial^2u^{i+1}(x,y,z)}{\partial n^2}\right|_{\Gamma},
\end{align}
where $n$ is the unit outward normal vector on $\Gamma$, $d$ is the signed distance between a point $(x_j,y_k,z_l)\in \gamma$ and the
point of its orthogonal projection $(x,y,z)$ on the continuous boundary $\Gamma$ in the direction of $n$. 
\end{definition}
Basically, the extension operator~\eqref{eqn:extension_operator}
defines values of $\pi_{\gamma\Gamma}[u^{i+1}]$ at the point of the
discrete grid boundary $(x_j, y_k, z_l)\in \gamma$ with the desired
accuracy through the values of the continuous solution and its
gradients at time $t^{i+1}$ at the continuous boundary $\Gamma$ of the
domain. In particular, we consider the extension operator \eqref{eqn:extension_operator} defined in $(x_j,y_k,z_k)\in\gamma_{in}$ when we solve the reduced BEP~\eqref{eqn:reduced_BEP}. In addition, note that $d$ and $n$ need not to be known precisely, see Tables \ref{table:d-perturbed}--\ref{table:all-perturbed} in Section \ref{sec:numerics-dbc}.\\
\par\noindent {\bf Discretization on the Surface: }
\par Here, for
simplicity, we assume that the surface $\Gamma$ is a sphere with
radius $R$. However, the proposed numerical algorithms can be extended
to more general smooth domains and, hence, more general surfaces, and
the main steps of the methods will stay the same (see Remark \ref{remark:derivatives-shperical-harmonics} below).
\paragraph{Case 1: Dynamic Boundary Conditions
  \eqref{eqn:dynamic-bc}.}
We will use trapezoidal in time scheme for \eqref{eqn:dynamic-bc},
but other time discretizations can be employed as well. 
Since, in this work $\Gamma$ is a sphere, we have that the normal derivative satisfies,
\begin{align}
\frac{\partial u(x, y, z, t)}{\partial n}=\frac{\partial u(x, y, z,t)}{\partial r},\quad (x, y, z)\in\Gamma,
\end{align}
where $n$ is the unit outward normal vector and $r$ is the variable radius in the spherical coordinates, and similarly, $u_{nn}=u_{rr}$.

The discrete in time dynamic boundary condition~\eqref{eqn:dynamic-bc} is
\begin{align}\label{eqn:discrete-dynamic-bc}
&\frac{u^{i+1}(x, y, z)-u^i(x, y, z)}{\Delta t}\nonumber\\
=&\frac{1}{2}\Big(\Delta_\Gamma u^{i+1}(x, y, z)-u^{i+1}(x, y, z)-\frac{\partial u^{i+1}(x, y, z)}{\partial r}+g^{i+1}(x, y, z)\\
&+\Delta_{\Gamma} u^{i}(x, y, z)-u^{i}(x, y, z)-\frac{\partial u^{i}(x, y, z)}{\partial r}+g^{i}(x, y, z) \Big),\nonumber
\end{align}
for $(x, y, z)\in\Gamma$. Here, $u^{i+1}(x, y, z)$ is an approximation in time
of $u(x, y, z, t^{i+1})$, and $g^{i+1}(x, y, z)$ is an approximation of
$g(x, y, z, t^{i+1})$ at time level $t^{i+1}$.
Also, note that, the Laplace-Beltrami operator on the sphere $\Gamma$
with a radius $R$ at time $t^{i+1}$ can be obtained as,
\begin{align}\label{eqn:laplace-beltrami}
\Delta_\Gamma u^{i+1}(x, y,
  z)=\frac{1}{R^2\sin\theta}\frac{\partial}{\partial\theta}\left(\sin\theta\frac{\partial u^{i+1}(x, y,
  z)}{\partial\theta}\right)+\frac{1}{R^2\sin^2\theta}\frac{\partial^2u^{i+1}(x,
  y, z)}{\partial\varphi^2},
\end{align}
where $(\theta,\varphi)$ are the polar and azimuthal angles for a point $(x,y,z)\in\Gamma$. \\
Next, from~\eqref{eqn:discrete-dynamic-bc}, we can express the term $u^{i+1}_r(x, y, z)$ as,
\begin{align}\label{eqn:discrete-dynamic-bc1}
\frac{\partial u^{i+1}(x, y, z)}{\partial r}&=\Delta_\Gamma u^{i+1}(x,
                                              y, z)-(1+\sigma)u^{i+1}(x,y,z)+\sigma
                                              u^{i}(x, y,
                                              z)+g^{i+1}(x, y, z)\nonumber\\
&\quad+\Delta_\Gamma u^{i}(x, y, z)-u^{i}(x, y, z)-\frac{\partial
  u^{i}(x, y, z)}{\partial r}+g^{i}(x, y, z),\quad (x, y, z) \in\Gamma\nonumber\\
&=\Delta_\Gamma u^{i+1}(x, y, z)-(1+\sigma)u^{i+1}(x, y, z)\\
&\quad+\sigma u^{i}(x, y, z)+g^{i+1}(x, y, z)+u_t^{i}(x, y, z),\quad (x,
  y, z)\in\Gamma,\nonumber
\end{align}
where $\sigma=2/\Delta t$ as before, and $u^i_t(x, y, z)$ denotes the time derivative
of $u(x, y, z, t)$ at time level $t^{i}$,
\begin{align}
u_t^{i}(x, y, z)=\Delta_\Gamma u^{i}(x, y,z)-u^{i}(x, y,
  z)-\frac{\partial u^{i}(x, y, z)}{\partial r}+g^{i}(x, y, z).
\end{align}
We assume that $u^{0}_t(x, y, z)$ is known initially, since
$u^0(x, y, z)$ and $g^0(x, y, z)$ are known at the initial time. Note, that the time
derivative $u^{i+1}_t(x, y, z)$ at the next time level $t^{i+1}$, can be updated efficiently using the following
formula (consequence of \eqref{eqn:dynamic-bc} and \eqref{eqn:discrete-dynamic-bc}),
\begin{align}\label{eqn:time-derivative-update}
u^{i+1}_t(x, y, z) = \sigma u^{i+1}(x, y,z)-\sigma u^i(x, y, z)-u^i_t(x, y, z),\quad (x, y, z)\in \Gamma,
\end{align}
once we have computed $u^{i+1}(x, y, z)$ at time level $t^{i+1}$.

Furthermore, we note that $u^{i+1}_{rr}(x, y, z)$ can be expressed in terms of $u^{i+1}_r(x, y, z)$ if one subtracts \eqref{eqn:dynamic-bc} from \eqref{eqn:dynamic-eq} by extending \eqref{eqn:dynamic-eq} outside of domain $\Omega$:
\begin{align}
\frac{\partial^2 u^{i+1}(x,y,z)}{\partial r^2}=-u^{i+1}(x,y,z)-\left(1+\frac{2}{R}\right)\frac{\partial u^{i+1}(x,y,z)}{\partial r}-f^{i+1}(x,y,z)+g^{i+1}(x,y,z),\label{eqn:u_rr-exten-u_r}
\end{align}
for $(x,y,z)\in\Gamma$.
Also, note that the normal derivative $u^{i+1}_r(x, y, z)$ depends linearly on $u^{i+1}(x, y, z)$ as in \eqref{eqn:discrete-dynamic-bc1}, hence we only need to determine one unknown term $u^{i+1}(x, y, z)$ in the extension operator \eqref{eqn:extension_operator}.

\paragraph{Spectral Approach.}
To combine extension operator (\ref{eqn:extension_operator}) accurately
and efficiently with dynamic boundary condition
\eqref{eqn:discrete-dynamic-bc} (and, hence with
\eqref{eqn:discrete-dynamic-bc1}), we will introduce the spectral
approximations at each time level $t^{i+1}$ of the following two terms:
\begin{align}
u^{i+1}(x, y, z)&\approx \sum_{\kappa=1}^{L}a^{i+1}_{\kappa}\phi_{\kappa}(\theta,\varphi),\quad (x, y, z)\in \Gamma,\label{eqn:dbc-spectral-1}
\end{align}
where $(\theta,\varphi)$ are the polar and the azimuthal angles for a
point $(x, y, z)\in\Gamma$.

\begin{remark}
Here, the number of spherical harmonics $L$ does not depend on the underlying mesh sizes and depends on the properties of the solutions to the models.
% In general, number of harmonics for \eqref{eqn:dbc-spectral-1} can be different
% than number of harmonics for \eqref{eqn:dbc-spectral-2} (depends on the
% regularity). Here, for simplicity, we assume the same number of
% harmonics for \eqref{eqn:dbc-spectral-1} and \eqref{eqn:dbc-spectral-2}.
\end{remark}

Now, combining relations \eqref{eqn:discrete-dynamic-bc1}, \eqref{eqn:u_rr-exten-u_r} and
\eqref{eqn:dbc-spectral-1} with the
extension operator~\eqref{eqn:extension_operator}, we obtain
\begin{align}
\pi_{\gamma\Gamma}[u^{i+1}]|_{(x_j,y_k,z_l)} &=u^{i+1}(x, y, z)|_{\Gamma}+d\left.\frac{\partial
                 u^{i+1}(x, y, z)}{\partial
                 r}\right|_{\Gamma}+\frac{d^2}{2}\left.\frac{\partial^2
                 u^{i+1}(x, y, z)}{\partial r^2}\right|_{\Gamma}\\
&=\left(1-d(1+\sigma)+\frac{d^2}{2}\left(\left(\frac{2}{R}+1\right)(1+\sigma)-1\right)\right)u^{i+1}(x, y, z)\nonumber\\
&\quad+\left(d-\frac{d^2}{2}\left(\frac{2}{R}+1\right)\right)\Delta_\Gamma u^{i+1}(x, y, z)\nonumber\\
&\quad+d(\sigma u^i(x, y, z)+g^{i+1}(x, y,
  z)+u_t^{i}(x, y, z))\\
&\quad-\frac{d^2}{2}\left(\left(\frac{2}{R}+1\right)(\sigma u^i(x, y, z)+g^{i+1}(x, y,
  z)+u_t^{i}(x, y, z))\right)\nonumber\\
&\quad+\frac{d^2}{2}\left(-f^{i+1}(x,y,z)+g^{i+1}(x,y,z)\right)\nonumber\\
&\approx u^{i+1}_\gamma (x_j,y_k,z_l)\\
&=A\bm{a}^{i+1}+\bm{c}^{i+1},\quad  (x_j,y_k,z_l)\in
  \gamma \mbox{ and } (x ,y, z)\in \Gamma,\label{eqn:extension-operator-reformulated}
\end{align}
where $\bm{a}^{i+1}$ is the vector of the unknown spectral coefficients $a^{i+1}_{\kappa}$,
% $\bm{b}^{i+1}$ is the vector of the unknown spectral coefficients $b^{i+1}_{\kappa}$ 
$\bm{c}^{i+1}$ denotes the known vector: 
\begin{align}
\bm{c}^{i+1}=&d(\sigma u^i(x, y, z)+g^{i+1}(x, y, z)+u_t^{i}(x, y, z))\nonumber\\
&-\frac{d^2}{2}((\frac{2}{R}+1)(\sigma u^i(x, y, z)+g^{i+1}(x, y, z)+u_t^{i}(x, y, z)))\nonumber\\
&+\frac{d^2}{2}(-f^{i+1}(x,y,z)+g^{i+1}(x,y,z)),
\end{align}
and $d$ is the signed distance from
the point  $(x_j, y_k, z_l)$ in $\gamma$ to its foot point $(x, y, z)$ on the continuous boundary $\Gamma$. The coefficient matrix $A$ is assembled using the basis functions, i.e.,
\begin{align}
A_{m,\kappa} &=
\left(1-d_m(1+\sigma)+\frac{d_m^2}{2}\left(\left(\frac{2}{R}+1\right)(1+\sigma)-1\right)\right)\phi_{\kappa}(\theta_m,\varphi_m)\nonumber\\
&\quad+\left(d_m-\frac{d_m^2}{2}\left(\frac{2}{R}+1\right)\right)\Delta_\Gamma \phi_{\kappa}(\theta_m,\varphi_m)
\end{align}
where $m$ is the index that represents a point in $\gamma$, 
$(\theta_m,\varphi_m)$ are the polar and azimuthal angles for the foot
point $(x, y, z) \in \Gamma$ of a point $m$ in
$\gamma$, and $d_m$ is the signed distance for this point. Note, $A$ is assembled using whole
  point set $\gamma$. However, only the rows corresponding to
  $\gamma_{in}$ will be used in our algorithm when we solve the reduced BEP~\eqref{eqn:reduced_BEP}.

\begin{remark}\label{remark:derivatives-shperical-harmonics}
a) In the special case of a sphere, the surface laplacian of a spherical harmonic is conveniently obtained by the following eigenvalue-eigenfunction relation:
\begin{align}\label{eqn:eigen-approach}
\Delta_{\Gamma}Y_{\ell}^{\mathfrak{m}}(\theta,\varphi) = -\ell(\ell+1)R^2Y_{\ell}^{\mathfrak{m}}(\theta,\varphi)
\end{align}
where $Y_\ell^{\mathfrak{m}}(\theta,\varphi)$ is the spherical harmonic function of degree $\ell$ and order ${\mathfrak{m}}$ (see detailed formulas \eqref{eqn:real-spherical-harmonics}--\eqref{eqn:kappa-relation} in Section \ref {sec:numerics-setup}) and $R$ is the radius of the sphere, 
see also \cite{MR3875514,MR3794068}. Another equivalent approach is to use \eqref{eqn:laplace-beltrami} and \eqref{eqn:dbc-spectral-1}, where the derivatives of the spherical harmonics
$\frac{\partial\phi_{\kappa}^{i+1}}{\partial\theta}$,
$\frac{\partial^2\phi_{\kappa}^{i+1}}{\partial\theta^2}$,$\frac{\partial^2\phi_{\kappa}^{i+1}}{\partial\varphi^2}$
can be obtained using recursive formula \cite{MR1225604}. 
In the numerical section, we adopt the relation \eqref{eqn:eigen-approach} for the efficiency of the codes.

b) In this work, we showcase the versatility of the DPM framework for dynamic BC and bulk-surface problems, and we illustrate the ideas of the method using spherical geometry in 3D. We should note that, the basis functions in the spectral approximation of the terms in the extension operator  in the DPM framework are not limited to spherical harmonics. For example, in the case of smooth geometry other than spheres, local radial basis functions can be employed instead of spherical harmonics. In addition, DPM-based algorithms were developed for models on domains with corners (2D) \cite{MR3651387} or wedges (3D) \cite{MR3954435}. Furthermore, one possible future direction is to replace the spectral approximation on the surface with a more general method that can handle arbitrary geometry, for instance using ideas of the trace finite element method (trace-FEM) \cite{MR3345245,MR3717149} that utilizes the restriction (trace) of a volumetric finite element space of piecewise continuous trilinear functions, to solve surface equations. In addition, the choice of the discretization of the bulk equation in the DPM framework has also a flexibility (and can be selected to be FEM, for example).

\end{remark}

\paragraph{Case 2: Bulk-Surface Coupling \eqref{eqn:coupling}-\eqref{eqn:surface}.} As for the bulk-surface problems, we assume here
that the surface $\Gamma$ is also a sphere with radius $R$, and thus, the
Laplace-Beltrami operator $\Delta_\Gamma$ at time $t^{i+1}$ is computed using the eigenvalue approach \eqref{eqn:eigen-approach}. Again, the first order normal
derivative is computed as $u_n(x, y, z, t)\equiv \nabla u(x, y, z,
t)\cdot n = u_r(x, y, z, t)$ for $(x, y, z)\in\Gamma$. 
\par To discretize in time equation on the surface
\eqref{eqn:surface}, we will use trapezoidal in time scheme
as it is used in the bulk \eqref{fully-discrete-cn}. The discrete in
time surface equation is (as a result of \eqref{eqn:surface}):
\begin{align}\label{eqn:semi-discrete-surface-eq}
\frac{v^{i+1}(x, y, z)-v^i(x, y, z)}{\Delta t}
  &=\frac{1}{2}(\Delta_\Gamma v^{i+1}(x, y, z)+g^{i+1}(x, y,
  z)+h(u^{i+1}(x, y, z),v^{i+1}(x, y, z))\nonumber\\
&+v^{i}_t(x, y, z)), \quad (x,
  y, z)\in\Gamma, 
\end{align}
where $v^{i}_t=\Delta_\Gamma v^{i}+g^{i}+h(u^{i},v^{i})$. Note that, to compute the term $v^i_t$ efficiently, we use the formula,
\begin{align}\label{eqn:vti}
v^{i}_t(x, y, z)=\sigma v^{i}(x, y,z)-\sigma v^{i-1}(x, y, z)-v^{i-1}_t(x, y, z), \quad (x, y, z)\in \Gamma,
\end{align} 
which is consequence of the discretization
\eqref{eqn:semi-discrete-surface-eq} and \eqref{eqn:surface}.
Moreover, since from \eqref{eqn:coupling}, we have that $h(u^{i+1},
v^{i+1})=-u^{i+1}_r$, we obtain the following expression for
$u^{i+1}_r$,
\begin{align}
\frac{\partial u^{i+1}(x, y, z)}{\partial r} &= -\sigma v^{i+1}(x, y,
                                               z)+\Delta_\Gamma
                                               v^{i+1}(x, y,
                                               z)\nonumber\\
&+\sigma v^i(x, y, z)+g^{i+1}(x,
                                               y, z)+v^i_t(x, y, z),
                                               \quad (x, y, z)\in\Gamma, \label{eqn:normal-derivative-in-terms-of-v}
\end{align}
where as before, $\sigma=2/\Delta t$.
\par a) {\it Linear Bulk-Surface Coupling.} For simplicity, we first consider case
of linear coupling function $h(u, v)$ in  \eqref{eqn:coupling} similar
to, for example, \cite{Burman_2015} and \cite{Elliott_2012},
\begin{align}\label{eqn:lin-coup}
h(u, v) = u-v, \mbox{ on } \Gamma.
\end{align}
Since $h(u^{i+1},v^{i+1})=u^{i+1}-v^{i+1}$ at time level $t^{i+1}$, and using equation
\eqref{eqn:semi-discrete-surface-eq}, we have that,
\begin{align}
u^{i+1}(x, y, z) &= (1+\sigma)v^{i+1}(x, y, z)-\Delta_\Gamma
                   v^{i+1}(x, y, z)\nonumber\\
&-\sigma v^i(x, y, z) - g^{i+1}(x,
                   y, z)-v^i_t(x, y, z), \quad (x, y, z)\in\Gamma.\label{eqn:in-terms-of-v}
%\end{align}
%\begin{align}
\end{align}
\paragraph{Spectral Approach.} Similarly to model with dynamic boundary conditions, to couple accurately and efficiently discretization of the bulk
equations, hence, the reduced BEP \eqref{eqn:reduced_BEP} with the
discretization of the surface equation \eqref{eqn:surface} combined
with coupling function \eqref{eqn:lin-coup}, we will
employ idea of extension operator \eqref{eqn:extension_operator}
together with the spectral approximation of the functions $v^{i+1}(x,
  y, z)$ and $\frac{\partial^2 u^{i+1}(x, y, z)}{\partial r^2}$, $(x,
  y, z)\in \Gamma$ at each time level $t^{i+1}$.
\par Hence, for the density $u^{i+1}_\gamma$, we combine the extension
operator~\eqref{eqn:extension_operator} together with relations
\eqref{eqn:normal-derivative-in-terms-of-v}-\eqref{eqn:in-terms-of-v},
to obtain:
\begin{align}
\pi_{\gamma\Gamma}[u^{i+1}]|_{(x_j,y_k,z_l)} =& u^{i+1}(x, y, z) + d\frac{\partial
                                 u^{i+1}(x, y, z)}{\partial
                                 r}+\frac{d^2}{2}\frac{\partial^2
                                 u^{i+1}(x, y, z)}{\partial r^2}\\
=&[(1+\sigma)v^{i+1}-\Delta_\Gamma v^{i+1}]+d\left[-\sigma v^{i+1}+\Delta_\Gamma v^{i+1}\right]+\frac{d^2}{2}\frac{\partial^2 u^{i+1}}{\partial r^2}\nonumber\\
&+[-\sigma v^i- g^{i+1}-v^i_t]+d\left[\sigma
  v^i+g^{i+1}+v^i_t\right],\label{eqn:extension-operator-bs}
\end{align}
where $(x, y, z) \in \Gamma$ is the foot point of a point $(x_j, y_k, z_l)$ in
the discrete grid boundary $\gamma$, and $d$ is the signed distance
from a point $(x_j, y_k, z_l)$ in $\gamma$ to its foot point $(x, y,
z) \in\Gamma$.

\par Next, similarly to the approximation of the dynamic boundary
conditions, to construct density $u^{i+1}_\gamma $ efficiently for the bulk model \eqref{eqn:bulk}, we
assume spectral approximations of the terms $v^{i+1}(x, y, z)$ and, also of the term
$\frac{\partial^2 u^{i+1}(x, y, z)}{\partial r^2}$ in the extension operator~\eqref{eqn:extension-operator-bs}, i.e.,
\begin{align}
v^{i+1}(x, y, z)&\approx\sum_{\kappa=1}^{L}a^{i+1}_{\kappa}\phi_{\kappa}(\theta,\varphi), \quad (x, y, z)\in \Gamma,\\
\frac{\partial^2 u^{i+1}(x, y, z)}{\partial r^2}&\approx\sum_{\kappa=1}^{L}b^{i+1}_{\kappa}\phi_{\kappa}(\theta,\varphi), \quad (x, y, z)\in \Gamma,
\end{align}
where $\theta$ and $\varphi$ are the polar and the azimuthal angles of
the point $(x, y, z) \in \Gamma$. Then, after we replace
$v^{i+1}$ and $\frac{\partial^2 u^{i+1}(x, y, z)}{\partial r^2}$ in \eqref{eqn:extension-operator-bs} using the spectral approximations above, the approximation to the extension operator \eqref{eqn:extension-operator-bs} is given by,
\begin{align}\label{eqn:extension-operator-bs-reformulated}
\pi_{\gamma\Gamma}[u^{i+1}]|_{(x_j,y_k,z_l)} \approx u^{i+1}_\gamma = A\bm{a}^{i+1}+B\bm{b}^{i+1}+\bm{c}^{i+1},
\end{align}
where $\bm{a}^{i+1}$, $\bm{b}^{i+1}$ are the vectors that store the
unknown spectral coefficients, and $\bm{c}^{i+1}$ denotes the known term:
\begin{align}
\bm{c}^{i+1}&=[-\sigma v^i(x, y, z) - g^{i+1}(x, y, z)-v^i_t(x, y,
  z)]\nonumber\\
&+d\left[\sigma v^i(x, y, z)+g^{i+1}(x, y, z)+v^i_t(x, y,
  z)\right], \quad (x, y, z) \in \Gamma.
\end{align}
The coefficient matrices $A$ and $B$ are computed as,
\begin{align}
A_{m,\kappa} &= (1+\sigma)\phi_{\kappa}(\theta_m,\varphi_m)-\Delta_\Gamma \phi_{\kappa}(\theta_m,\varphi_m)\nonumber\\
&\quad+ d_m\left[-\sigma \phi_{\kappa}(\theta_m,\varphi_m)+\Delta_\Gamma \phi_{\kappa}(\theta_m,\varphi_m)\right],\\
B_{m,\kappa} &= \frac{d^2_{m}}{2}\phi_{\kappa}(\theta_m,\varphi_m).
\end{align}
Here $m$ is the index that represents a point in $\gamma$,
$(\theta_m,\varphi_m)$ are the polar and azimuthal angles for the foot
point $(x, y, z)\in \Gamma$ of a point $m$ in
$\gamma$, and $d_m$ is the signed distance for this point. Similarly, matrices $A$ and $B$ are assembled
  using the whole point set $\gamma$, but only the rows corresponding
  to the $\gamma_{in}$ set will be used in our algorithm to solve the reduced BEP~\eqref{eqn:reduced_BEP}.
\par b) {\it Nonlinear Bulk-Surface Coupling.} Here, we consider the example of
nonlinear coupling function $h(u, v)$ in \eqref{eqn:coupling}, similar
to, for example, \cite{Elliott_2017},
\begin{align}\label{eqn:nonlinear-coupling}
h(u,v) = uv.
\end{align}
And, as before, at time level $t^{i+1}$, we will have
$u^{i+1}v^{i+1}=-u^{i+1}_r$.
\paragraph{Spectral Approach.}
Similar to model with linear bulk-surface coupling \eqref{eqn:lin-coup}, to couple accurately and efficiently discretization of the bulk
equations, hence, the reduced BEP \eqref{eqn:reduced_BEP} with the
discretization of the surface equation \eqref{eqn:surface}, we will
employ idea of extension operator \eqref{eqn:extension_operator}
together with the spectral approximation of the functions $v^{i+1}(x,
  y, z)$, $u^{i+1}(x,
  y, z)$ and $\frac{\partial^2 u^{i+1}(x, y, z)}{\partial r^2}$, $(x,
  y, z)\in \Gamma$ at the time level $t^{i+1}$, i.e.,
\begin{align}
v^{i+1}(x, y,
  z)&\approx\sum_{\kappa=1}^{L}a^{i+1}_{\kappa}\phi_{\kappa}(\theta,\varphi),\quad
        (x, y, z) \in \Gamma,
      \label{sp1}\\
u^{i+1}(x, y,
  z)&\approx\sum_{\kappa=1}^{L}c^{i+1}_{\kappa}\phi_{\kappa}(\theta,\varphi),\quad
        (x, y, z) \in \Gamma,
  \label{sp2}\\
 \frac{\partial^2 u^{i+1}(x, y, z)}{\partial
  r^2}&\approx\sum_{\kappa=1}^{L}b^{i+1}_{\kappa}\phi_{\kappa}(\theta,\varphi), \label{sp3}\quad
        (x, y, z) \in \Gamma,
\end{align}
where, as before, ($\theta$, $\varphi$) are the polar and the azimuthal angles of
the point $(x, y, z) \in \Gamma$. \\
Then, the extension operator \eqref{eqn:extension_operator} becomes,
\begin{align}
\pi_{\gamma\Gamma}[u^{i+1}]|_{(x_j,y_k,z_l)} 
 =& u^{i+1}(x, y, z)+
                                  d\frac{\partial u^{i+1}(x, y,
                                  z)}{\partial
                                  r}+\frac{d^2}{2} \frac{\partial^2
                                  u^{i+1}(x, y, z)}{\partial r^2}\\
=&u^{i+1}+d\left(-\sigma v^{i+1}+\Delta_\Gamma v^{i+1}+\sigma v^i+g^{i+1}+v^i_t\right)+\frac{d^2}{2}\frac{\partial^2 u^{i+1}}{\partial r^2}\\
=&u^{i+1}+d\left(-\sigma v^{i+1}+\Delta_\Gamma v^{i+1}\right)+\frac{d^2}{2}\frac{\partial^2 u^{i+1}}{\partial r^2} +d(\sigma v^i+g^{i+1}+v^i_t)\\
\approx&u^{i+1}_\gamma (x_j, y_k, z_l) \nonumber\\
=&A\bm{a}^{i+1}+B\bm{b}^{i+1}+C\bm{c}^{i+1}+\bm{d}^{i+1},  (x_j, y_k, z_l)\in \gamma  \mbox{ and
  }(x, y, z)\in\Gamma,\label{eqn:extension-op2}
\end{align}
where the coefficient matrices $A,B,C$ for the unknown spectral coefficients $\bm{a}^{i+1},\bm{b}^{i+1},\bm{c}^{i+1}$ are computed as,
\begin{align}
A_{m,\kappa} &= d_m\left[-\sigma \phi_{\kappa}(\theta_m,\varphi_m)+\Delta_\Gamma \phi_{\kappa}(\theta_m,\varphi_m)\right],\\
B_{m,\kappa} &= \frac{d^2_{m}}{2}\phi_{\kappa}(\theta_m,\varphi_m),\\
C_{m,\kappa} &= \phi_{\kappa}(\theta_m,\varphi_m).
\end{align}
Here, $m$ is the index that represents a point in $\gamma$,
$(\theta_m,\varphi_m)$ are the polar and azimuthal angles for the foot
point $(x, y, z)\in \Gamma$ of a point $m$ in $\gamma$, and $d_m$ is the signed distance for this point. The vector
$\bm{d}^{i+1}$ in \eqref{eqn:extension-op2} represents the known quantity,
\begin{align}
\bm{d}^{i+1}=d(\sigma v^i(x,y,z)+g^{i+1}(x,y,z)+v^i_t(x,y,z)),\quad (x,y,z)\in\Gamma,
\end{align}
and is computed at the same foot point $(x, y, z)\in \Gamma$ of a point $m$ in $\gamma$. Again, matrices $A$, $B$ and $C$ are assembled for the entire point set $\gamma$, but only the rows corresponding to the $\gamma_{in}$ set will be used to solve the reduced BEP~\eqref{eqn:reduced_BEP}.
\paragraph{Linearization of the nonlinear coupling~\eqref{eqn:nonlinear-coupling}.}
To efficiently combine the coupling
equation~\eqref{eqn:nonlinear-coupling} with the BEP
\eqref{eqn:reduced_BEP} and with the discretization of the surface
equation \eqref{eqn:semi-discrete-surface-eq}, we will consider
linearization of \eqref{eqn:nonlinear-coupling} at time level $t^{i+1}$.
 
To linearize, we replace $v^{i+1}(x, y, z)$ in \eqref{eqn:nonlinear-coupling} at the time level $t^{i+1}$ by the following
approximation in time
\begin{align}\label{eqn:2-term-approx}
v^{i+1}(x, y, z)= v^i(x, y, z)+\Delta t v^i_t(x, y, z)+\mathcal{O}(\Delta t^2)
\end{align}
where $\Delta t=\mathcal{O}(h)$. Then, the linearization of \eqref{eqn:nonlinear-coupling} gives us,
\begin{align}
-n\cdot\nabla u^{i+1}(x, y, z) \approx u^{i+1}(x, y, z)(v^i(x, y, z)+\Delta
  t v^i_t(x, y, z)),\quad (x, y, z)\in\Gamma,
\end{align}
where $v_t^i$ term is computed via the relation \eqref{eqn:vti}. 
Note, that using \eqref{eqn:normal-derivative-in-terms-of-v} together
with spectral approximation in \eqref{sp1}-\eqref{sp2}, we can
formulate coupling relation \eqref{eqn:nonlinear-coupling} at
$t^{i+1}$ as,
\begin{align}
\Rightarrow -(-\sigma v^{i+1}(x, y, z)+\Delta_\Gamma v^{i+1}(x, y, z)+\sigma v^i(x, y, z)+g^{i+1}(x, y, z)+v_t^i(x, y, z))\nonumber\\
=u^{i+1}(x, y, z)(v^{i}(x, y, z)+\Delta tv^i_t(x, y, z)),\\
\Rightarrow A'a^{i+1}-\sigma v^i(x, y, z)-g^{i+1}(x, y, z)-v^i_t(x, y, z) = C'c^{i+1},\\
\Rightarrow -A'a^{i+1}+C'c^{i+1} = -\sigma
  v^i(x, y, z)-g^{i+1}(x, y, z)-v^i_t(x, y, z). \label{eqn:nonlinear-coupling-spect}
\end{align}
The expression (\ref{eqn:nonlinear-coupling-spect}) gives the linear relation between unknown spectral coefficients $a^{i+1}_\kappa$ and $c^{i+1}_\kappa$. Here, the matrices $A'$ and $C'$ are defined as,
\begin{align}
A'_{m,\kappa} &= -(-\sigma \phi_{\kappa}(\theta_m,\varphi_m)+\Delta_\Gamma \phi_{\kappa}(\theta_m,\varphi_m)),\\
C'_{m,\kappa} &= \phi_{\kappa}(\theta_m,\varphi_m)(v^{i}+\Delta tv^i_t).
\end{align}
Here,  $(\theta_m, \varphi_m)$ corresponds to the angles of the foot
point $(x, y, z)\in \Gamma$ of a point $m$ in $\gamma_{in}$ (since we employ the reduced BEP), and $v^i+\Delta t v^i_t$ is the
corresponding value for the same foot point $m$.

\begin{remark}
One possible improvement is to approximate $v^{i+1}$ in \eqref{eqn:nonlinear-coupling} at $t^{i+1}$ using the
following higher order in time approximation:
\begin{align}\label{eqn:3-term-approx}
v^{i+1}(x, y, z)\approx v^{i}(x, y, z)+\Delta t v^i_t(x, y,
  z)+\frac{\Delta t^2}{2}v^i_{tt}(x, y, z), \quad (x, y, z)\in \Gamma, 
\end{align}
where $v^{i}_{tt}(x, y, z)$ can be approximated using the finite difference approximation in time.
\end{remark}
\noindent\par{\bf Reconstruction of the Solutions at time $t^{i+1}$:}
\paragraph{Case 1: Dynamic Boundary Conditions.}
Next, we use the reduced BEP
\eqref{eqn:reduced_BEP} combined with the approximation of the extension operator in the form \eqref{eqn:extension-operator-reformulated}, to obtain the least squares (LS) system of dimension $|\gamma_{in}|\times L$ for the unknown spectral coefficients $\bm{a}^{i+1}$,
\begin{align}
[A-P_\gamma A]\bm{a}^{i+1} =
  G_{h,\Delta t}F^{i+1}_\gamma-(\bm{c}^{i+1}-P_\gamma\bm{c}^{i+1}), \mbox{ on }
  \gamma_{in}. \label{eqn:BEP-reformulated}
\end{align}
After that, we solve for the unknown spectral coefficients $\bm{a}^{i+1}$, using the normal equation of the reformulated
BEP~\eqref{eqn:BEP-reformulated}. 

\paragraph{Case 2: a) Linear Bulk-Surface Coupling.}
Similarly to the model with dynamic boundary conditions, we combine the reduced BEP
\eqref{eqn:reduced_BEP} and the approximation of the extension operator in the form
\eqref{eqn:extension-operator-bs-reformulated}, to obtain the LS system of dimension $|\gamma_{in}|\times(2L)$ for
the unknown spectral coefficients $\bm{a}^{i+1}$
and $\bm{b}^{i+1}$,
\begin{align}
[A-P_{\gamma}A]\bm{a}^{i+1}+[B-P_{\gamma}B]\bm{b}^{i+1}
  = G_{h,\Delta t}F^{i+1}_\gamma-(\bm{c}^{i+1}-P_{\gamma}\bm{c}^{i+1}), \mbox{
  on } \gamma_{in}.\label{eqn:linear-coupling-reformulated-bep}
\end{align}
Again, we solve for the unknown spectral coefficients $\bm{a}^{i+1}$
and $\bm{b}^{i+1}$ using the normal equation of the reformulated
BEP~\eqref{eqn:linear-coupling-reformulated-bep}.

\paragraph{Case 2: b) Nonlinear Bulk-Surface Coupling.}
Similarly to the model with dynamic boundary conditions and bulk-surface
model with linear coupling, we combine the reduced BEP
\eqref{eqn:reduced_BEP}, the approximation to the extension operator in the form
\eqref{eqn:extension-op2} and the coupling
condition \eqref{eqn:nonlinear-coupling-spect}, to obtain the LS system of dimension $2|\gamma_{in}|\times(3L)$ for the unknown spectral coefficients $\bm{a}^{i+1}$, $\bm{b}^{i+1}$ and $\bm{c}^{i+1}$,
\begin{align}
[A-P_\gamma A]\bm{a}^{i+1}+[B-P_\gamma B]\bm{b}^{i+1}+[C-P_\gamma C]\bm{c}^{i+1}&=G_{h,\Delta t}F^{i+1}_\gamma-(\bm{d}^{i+1}-P_\gamma \bm{d}^{i+1}),\mbox{ on } \gamma_{in},\label{eqn:LS-system}\\
-A'\bm{a}^{i+1}+C'\bm{c}^{i+1} &= -\sigma v^i-g^{i+1}-v^i_t, \mbox{ on } \gamma_{in}\label{eqn:LS-system-2}.
\end{align}
Similarly, we solve for the unknown spectral coefficients $\bm{a}^{i+1}$, $\bm{b}^{i+1}$ and $\bm{c}^{i+1}$, using the normal equation of the LS system \eqref{eqn:LS-system}--\eqref{eqn:LS-system-2}.

\begin{remark}
For the LS system in \textit{Case 1}, \textit{Case 2: a)} and \textit{Case 2: b)} described above, the normal equation approach reduces the computational cost of the algorithms significantly, since the size of the normal matrices will be $L\times L$, $2L\times(2L)$ or $3L\times(3L)$, and $|\gamma_{in}|\gg L$. 
As for the condition numbers of the normal matrices, they can be reduced to the magnitude of approximately $10^3$ on all meshes when one, for example, uses a simple preconditioner based on the maximum value in the column scaling in the LS system, i.e. for LS system $Ax=b$, the normal matrix is $P^TA^TAP$ where $P$ is a diagonal matrix with $P_{ii}=1/max(A_i)$, where $A_i$ is the $i$-th column of the matrix $A$. See Tables \ref{table:condition-number} for examples of the condition numbers.
\end{remark}

\par Once we get the spectral coefficients (see \textit{Case 1}, \textit{Case 2: a)} and \textit{Case 2: b)}), we will be able to reconstruct (i) the solutions $u^{i+1}(x,y,z)$ or $v^{i+1}(x,y,z)$ for $(x,y,z)$ on the surface at the time level $t^{i+1}$ using the spectral approximations; and (ii)
the density $u^{i+1}_\gamma$ at time level $t^{i+1}$ using \eqref{eqn:extension-operator-reformulated}
(dynamic boundary conditions),
\eqref{eqn:extension-operator-bs-reformulated} (bulk-surface model
with linear coupling), or \eqref{eqn:extension-op2} (bulk-surface model
with nonlinear coupling).
Finally, the approximated solution $u^{i+1}_{j,k,l}$, $(x_j,y_k,z_l)\in N^+$  to the model
\eqref{eqn:dynamic-eq}-\eqref{eqn:dynamic-ic} or \eqref{eqn:bulk}-\eqref{eqn:surface-ic} at the time level $t^{i+1}$ is obtained using the
discrete generalized Green's formula
\eqref{eqn:generalized_greens_formula} below.

\paragraph{Discrete Generalized Green's Formula.}  The final step of
DPM is to use the computed density $u^{i+1}_{\gamma}$ to construct the
approximation to the continuous solution in the bulk of the model
\eqref{eqn:dynamic-eq}-\eqref{eqn:dynamic-ic}, or of \eqref{eqn:bulk}-\eqref{eqn:surface-ic}.
\begin{proposition}[Discrete Generalized Green's formula.]\label{prop:discrete_gene_green_formula}
The discrete solution $u^{i+1}_{j, k, l}$ on $N^{+}$ constructed using {\it Discrete Generalized Green's formula},
\begin{align}\label{eqn:generalized_greens_formula}
    u^{i+1}_{j,k,l}=P_{N^+\gamma}u^{i+1}_{\gamma}+G_{h,\Delta t}F^{i+1}_{j,k,l},\quad(x_j,y_k,z_l)\in N^+,
\end{align}
is the approximation to the exact solution $u$ at $(x_j, y_k, z_l) \in
\Omega$ at time  $t^{i+1}$ of the continuous model
\eqref{eqn:dynamic-eq}-\eqref{eqn:dynamic-ic}, or of \eqref{eqn:bulk}-\eqref{eqn:surface-ic}.  We also conjecture that we have the following accuracy of the proposed numerical scheme,
\begin{equation}\label{eqn:accuracy}
    \left|\left|u^{i+1}_{j,k,l}-u(x_j,y_k,z_l,t^{i+1})\right|\right|_{\infty}=\mathcal{O}(h^2+\Delta t^2).
\end{equation}
\end{proposition}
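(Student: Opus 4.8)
The plan is to split the statement into its provable algebraic core and its conjectural quantitative tail. The algebraic core is that the grid function produced by formula~\eqref{eqn:generalized_greens_formula} is exactly the discrete solution of the bulk scheme~\eqref{fully-discrete-cn} whose grid-boundary trace is the density $u^{i+1}_\gamma$ returned by the reduced BEP together with the extension operator and the spectral ansatz; this follows directly from Definition~\ref{def:difference-potentials} and Theorems~\ref{thm:full_BEP}--\ref{thm:reduced_BEP}. The bound~\eqref{eqn:accuracy} is labelled a conjecture, so for it I would only lay out the consistency contributions and point to the missing stability estimate.

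First I would verify that the reconstruction solves~\eqref{fully-discrete-cn} on $M^+$. Apply $L_{h,\Delta t}$ to~\eqref{eqn:generalized_greens_formula} at an arbitrary node of $M^+$. Since the $7$-point stencil centred in $M^+$ stays inside $N^+$, restricting the relevant AP solutions from $N^0$ to $N^+$ does not change the equations at nodes of $M^+$; hence $L_{h,\Delta t}\big[P_{N^+\gamma}u^{i+1}_\gamma\big]=0$ on $M^+$ because the right-hand side~\eqref{rhs:difference_potentials} vanishes there, and $L_{h,\Delta t}\big[G_{h,\Delta t}F^{i+1}\big]=F^{i+1}$ on $M^+$ by construction of the Particular Solution with right-hand side~\eqref{rhs:particular_solution}. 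Summing the two yields $L_{h,\Delta t}u^{i+1}_{j,k,l}=F^{i+1}_{j,k,l}$ on $M^+$. Next I would check the trace: taking $Tr_\gamma$ of~\eqref{eqn:generalized_greens_formula} and using $Tr_\gamma P_{N^+\gamma}u^{i+1}_\gamma=P_\gamma u^{i+1}_\gamma$ and $Tr_\gamma G_{h,\Delta t}F^{i+1}=G_{h,\Delta t}F^{i+1}_\gamma$ gives $Tr_\gamma u^{i+1}_{j,k,l}=P_\gamma u^{i+1}_\gamma+G_{h,\Delta t}F^{i+1}_\gamma$, which equals $u^{i+1}_\gamma$ precisely because $u^{i+1}_\gamma$ solves the BEP~\eqref{eqn:full_BEP} (equivalently the reduced BEP~\eqref{eqn:reduced_BEP}, by Theorem~\ref{thm:reduced_BEP}). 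Thus $u^{i+1}_{j,k,l}$ is a solution of~\eqref{fully-discrete-cn} on $M^+$ whose trace on $\gamma$ is $u^{i+1}_\gamma$; since $u^{i+1}_\gamma$ was determined (via the normal equation of the least-squares BEP system) so as to satisfy the discrete dynamic boundary condition~\eqref{eqn:discrete-dynamic-bc} in Case~1, or the discrete coupling and surface relations~\eqref{eqn:coupling} and~\eqref{eqn:semi-discrete-surface-eq} in Case~2, through the extension operator~\eqref{eqn:extension_operator}, the reconstructed $u^{i+1}_{j,k,l}$ is the discrete solution of the fully discretized model. This proves the first assertion.

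For the conjectured rate~\eqref{eqn:accuracy} the plan is to combine four consistency estimates with one stability estimate: the Crank--Nicolson discretizations of~\eqref{eqn:dynamic-eq}/\eqref{eqn:bulk} and of the surface equations are $\mathcal{O}(\Delta t^2)$; the $7$-point operator $\Delta_h-\sigma I$ is $\mathcal{O}(h^2)$ on $M^+$; the extension operator~\eqref{eqn:extension_operator}, which carries the normal Taylor expansion through the $\tfrac{d^2}{2}\partial_n^2$ term, has node-wise truncation error $\mathcal{O}(d^3)=\mathcal{O}(h^3)$ since $d=\mathcal{O}(h)$, consistent with an $\mathcal{O}(h^2)$ global error; the spectral truncation in~\eqref{eqn:dbc-spectral-1} and~\eqref{sp1}--\eqref{sp3} is made negligible (spectrally small for smooth data) by choosing $L$ independently of $h$; and the linearization~\eqref{eqn:2-term-approx} of the nonlinear coupling is $\mathcal{O}(\Delta t^2)$ under $\Delta t=\mathcal{O}(h)$. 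The hard part, and the reason~\eqref{eqn:accuracy} is only conjectured, is the missing stability ingredient: a bound, uniform in $h$, for the discrete solution operator of the auxiliary problem (hence for $G_{h,\Delta t}$ and $P_{N^+\gamma}$) in the relevant grid norms, together with a uniformly bounded inverse for the least-squares BEP system. Absent such an estimate, I would instead present the numerical convergence study in Section~\ref{sec:numerics} as empirical support for~\eqref{eqn:accuracy}.
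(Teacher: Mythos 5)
Your argument is sound, and it supplies more than the paper does: the paper states this proposition without any proof, relegating the accuracy claim \eqref{eqn:accuracy} to a conjecture supported by the numerical experiments and a pointer to \cite{Ryab} for the general DPM theory. Your direct verification of the algebraic core---applying $L_{h,\Delta t}$ to \eqref{eqn:generalized_greens_formula}, using that the Difference Potential is annihilated by $L_{h,\Delta t}$ on $M^+$ (the right-hand side \eqref{rhs:difference_potentials} vanishes there) while the Particular Solution reproduces $F^{i+1}$ via \eqref{rhs:particular_solution}, and then recovering the trace identity from the BEP---is the standard DPM argument and is correct; it is essentially the constructive half of Theorem~\ref{thm:full_BEP}. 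Your treatment of \eqref{eqn:accuracy} is also appropriately calibrated: you enumerate the consistency contributions and correctly isolate the missing uniform stability bound (for the discrete AP solution operator and for the least-squares system) as the reason the rate remains a conjecture. One caveat worth making explicit: in the actual algorithm the density $u^{i+1}_\gamma$ is produced by a least-squares solve of the overdetermined system \eqref{eqn:BEP-reformulated} (or \eqref{eqn:linear-coupling-reformulated-bep}, or \eqref{eqn:LS-system}--\eqref{eqn:LS-system-2}), so the BEP is satisfied only up to the least-squares residual; consequently $Tr_{\gamma}u^{i+1}=u^{i+1}_{\gamma}$ holds only approximately, and that residual is an additional error source---beyond the consistency and stability terms you list---that any rigorous proof of \eqref{eqn:accuracy} would have to control.
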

\begin{remark}
The accuracy (\ref{eqn:accuracy}) is observed in all numerical
experiments presented in Section~\ref{sec:numerics}. The reader can consult
\cite{Ryab} for the detailed theoretical foundation of DPM.
\end{remark}

%%%%%%%%%%%%%%%%%%%%%%%%%%%%%%%%%%%%%%%%%%%%%%%%%%%%%%%%%%%%%%%%%%%%%%%%%%%%%%%%%%%%%%
%\paragraph{An Outline of Main Steps of the DPM-based Algorithm.}
\begin{algorithm}
\caption{An Outline of Main Steps of the DPM-based Algorithm}\label{alg:DPM-based-algorithm}
\begin{algorithmic}[1]
\STATE Construct point sets $M^\pm,M^0,N^\pm,N^0$, $\gamma_{ex}$ and $\gamma_{in}$ from uniform meshes on the auxiliary domain $\Omega^0$, which embeds $\Omega$
\STATE Assemble matrices for the reduced BEP:
    \begin{ALC@g}
    \IF {\textit{Case 1}}
        \STATE Assemble $A$, then compute $A-P_\gamma A$ with restriction to the point set $\gamma_{in}$ in \eqref{eqn:BEP-reformulated}
    \ELSIF {\textit{Case 2: a)}}
        \STATE Assemble $A$ and $B$, then compute $A-P_\gamma A,B-P_\gamma B$ with restriction to the point set $\gamma_{in}$ in \eqref{eqn:linear-coupling-reformulated-bep}
    \ELSIF {\textit{Case 2: b)}}
        \STATE Assemble $A$, $B$ and $C$, then compute $A-P_\gamma A,B-P_\gamma B,C-P_\gamma C$ with restriction to the point set $\gamma_{in}$ in \eqref{eqn:LS-system}, and assemble $A'$ in $\gamma_{in}$
    \ENDIF
    \end{ALC@g}
\IF{\textit{Case 1} \OR \textit{Case 2: a)}}
    \STATE Precompute the inverse of the coefficient matrix in the normal equation of the LS system \eqref{eqn:BEP-reformulated} or \eqref{eqn:linear-coupling-reformulated-bep}, using Cholesky decomposition
\ENDIF
\STATE Initialize the bulk/surface solutions using the initial conditions
\WHILE {$t^{i+1}\leq T_{final}$}
\IF {\textit{Case 2: b)}}
\STATE Assemble matrix $C'$ in $\gamma_{in}$ and compute the Cholesky decomposition of the coefficient matrix of the normal equation corresponding to the LS system \eqref{eqn:LS-system}--\eqref{eqn:LS-system-2}
\ENDIF
\STATE Construct the Particular Solution $G_{h,\Delta t}F^{i+1}_{j,k,l}$ on $N^+$ using the discrete AP
\STATE Solve the BEP for the unknown spectral coefficients using the normal equations
\STATE Reconstruct the density $u^{i+1}_\gamma$ using extension operator~\eqref{eqn:extension-operator-reformulated} for \textit{Case 1}, \eqref{eqn:extension-operator-bs-reformulated} for \textit{Case 2: a)}, or \eqref{eqn:extension-op2} for \textit{Case 2: b)}
\STATE Obtain bulk solution $u^{i+1}$ using the discrete generalized Green's formula~\eqref{eqn:generalized_greens_formula}, and surface solution $u^{i+1}$ or $v^{i+1}$ using the spectral approximation
\STATE Update and march in time
\ENDWHILE
\end{algorithmic}
\end{algorithm}

\begin{remark}
We solve the LS systems \eqref{eqn:BEP-reformulated} in \textit{Case 1}, \eqref{eqn:linear-coupling-reformulated-bep} in \textit{Case 2: a)}, and \eqref{eqn:LS-system}--\eqref{eqn:LS-system-2} in \textit{Case 2: b)} using the normal equation approach.
For the normal equations of the resulting algebraic systems, the inverse matrices of the normal matrices are pre-computed outside of the time loop for \textit{Case 1} and \textit{Case 2: a)} using Cholesky decomposition.

For \textit{Case 2: b)}, the normal matrix needs to be assembled and the Cholesky decomposition is performed at each time step since the matrix $C'$ is updated at each time level inside the time loop. However, if the size of the normal matrix is large, for efficiency,  one can exploit the block structures of the normal matrix and update only the blocks associated with $C'$ at each time step.
\end{remark}

%%%%%%%%%%%%%%%%%%%%%%%%%%%%%%%%%%%%%%%%%%%%%%%%%%%%%%%%%%%%%%%%%%%%%%%%%%%%%%%%%%%%%%%%%%%%%%%%%%%%%%%
%%%%%%%%%%%%%% Numerical Section
%%%%%%%%%%%%%%%%%%%%%%%%%%%%%%%%%%%%%%%%%%%%%%%%%%%%%%%%%%%%%%%%%%%%%%%%%%%%%%%%%%%%%%%%%%%%%%%%%%%%%%%
\section{Numerical Results}\label{sec:numerics}
In this section, we illustrate {setup of the numerical tests and present the numerical results (errors and convergence rates, 3D views of the bulk/surface solutions, etc.) for the models with dynamic boundary condition (BC) \eqref{eqn:dynamic-eq}--\eqref{eqn:dynamic-ic}, and for the bulk-surface problems \eqref{eqn:bulk}--\eqref{eqn:surface-ic}. In this work, we restrict our discussion to a spherical domain with radius $R$ centered at the origin. For a general domain in 3D, the proposed algorithms can be extended in a straightforward way, for example, by selecting a different set of basis functions or replacing the spectral approach on the surface with the trace-FEM \cite{MR3345245,MR3717149} (see Remark \ref{remark:derivatives-shperical-harmonics} in Section \ref{sec:algorithm}), which will be reported in future work.

\subsection{Setup of Numerical Tests}\label{sec:numerics-setup}

The auxiliary domain is chosen to be a cube, i.e., $[-R-R/5,R+R/5]\times [-R-R/5,R+R/5] \times [-R-R/5,R+R/5]$. Then, the auxiliary domain is discretized using meshes of dimension $N\times N\times N$ and the grid spacing of the mesh is $h=2(R+R/5)/N$. We adopt the notation $N\times N\times N$ for meshes throughout this numerical section. Note that, other choices of the auxiliary domains will also work.

For the basis functions $\phi_\kappa(\theta,\varphi)$, we use the following spherical harmonics:
\begin{align}\label{eqn:real-spherical-harmonics}
Y_\ell^\mathfrak{m}(\theta,\varphi) = \left\{
\begin{array}{lc}
P_\ell^\mathfrak{m}(\cos\theta), &\quad \mathfrak{m}=0,\\
P_\ell^\mathfrak{m}(\cos\theta)\cos(m\varphi), &\quad \mathfrak{m}>0,\\
P_\ell^{|\mathfrak{m}|}(\cos\theta)\sin(|m|\varphi), &\quad \mathfrak{m}<0,\\
\end{array}\right.
\quad\mbox{for }-\ell\leq \mathfrak{m} \leq \ell.
\end{align}
where $Y_\ell^{\mathfrak{m}}(\theta,\varphi)$ is the spherical harmonic function of degree $\ell$ and order ${\mathfrak{m}}$. 
For the index $\kappa$ in $\phi_\kappa(\theta,\varphi)$, it is related to $(\ell,\mathfrak{m})$, i.e.,
\begin{align}\label{eqn:kappa-relation}
\kappa=\left\{
\begin{array}{lc}
\ell^2+2\mathfrak{m}+1,& \mathfrak{m}\geq0,\\
\ell^2+2|\mathfrak{m}|,& \mathfrak{m}<0,\\
\end{array}
\right.
\end{align}

The total number of spherical harmonics used in the tests is determined by the exact solutions $u(x,y,z,t)$ and $v(x,y,z,t)$ on the boundary $\Gamma$. Generally, the spectral coefficients of the spherical harmonic basis functions for the initial data of $u$ and $v$ can be computed. This helps to determine the degree and the order of the spherical harmonics to be included in the spectral approximations. Thus, the total number of harmonics used in the numerical tests is independent of the grid spacing $h$. The only constraint on the number of the harmonics is that, the total number of unknown spectral coefficients in the BEPs (\eqref{eqn:BEP-reformulated} for \textit{Case 1}, \eqref{eqn:linear-coupling-reformulated-bep} for \textit{Case 2: a)}, and \eqref{eqn:LS-system} for \textit{Case 2: b)}) is much less than $|\gamma_{in}|$. Generally, this condition is easily satisfied due to the abundance of mesh nodes in $\gamma_{in}$ in 3D, and the relative small number of basis functions required to resolve $u$ and $v$ on the boundary.

In all the numerical tests in this section, we set the final time to
be $T=0.1$. For the time approximation of the models, we adopt the
second-order trapezoidal scheme, and we use the time step $\Delta t = h$, since we consider the second-order approximation in space. There is no particular reason of the choice of the trapezoidal rule, and other second-order implicit time stepping techniques can also be employed. For example, one can use the second-order implicit Runge-Kutta scheme, and the numerical results will not be significantly different from the ones obtained with the trapezoidal rule.

\subsection{The Bulk/Surface Errors}

The approximation to the $\infty$-, $L^2$- and $H^1$-norm errors in the bulk are computed using the following formulas respectively:
\begin{align}
||u-u^i_h||_{\infty(\Omega)}\approx E_{\infty(\Omega)} = \max_{i,j,k,l}{1}_{M^+}\left|u^e(x_j,y_k,z_l,t^i)-u^i_{j,k,l}\right|\label{eqn:bulk_infty}
\end{align}
\begin{align}
||u-u^i_h||_{L^2(\Omega)}\approx E_{L^2(\Omega)} = \max_{i}\left[\sum_{j,k,l}{1}_{M^+}\left(u(x_j,y_k,z_l,t^i)-u^i_{j,k,l}\right)^2h^3\right]^{\frac{1}{2}}\label{eqn:bulk_l2}
\end{align}
\begin{align}
&||u-u^i_h||_{H^1(\Omega)}\approx E_{H^1(\Omega)} =\max_{i}\Bigg[\sum_{j,k,l}{1}_{M^+}\left(u(x_j,y_k,z_l,t^i)-u^i_{j,k,l}\right)^2h^3\nonumber\\
&\quad+{1}_{M^+}\left(\frac{u(x_j+h,y_k,z_l,t^i)-u(x_j-h,y_k,z_l,t^i)}{2h}-\frac{u^i_{j+1,k,l}-u^i_{j-1,k,l}}{2h}\right)^2h^3\nonumber\\
&\quad+{1}_{M^+}\left(\frac{u(x_j,y_k+h,z_l,t^i)-u(x_j,y_k+h,z_l,t^i)}{2h}-\frac{u^i_{j,k+1,l}-u^i_{j,k-1,l}}{2h}\right)^2h^3\nonumber\\
&\quad+{1}_{M^+}\left(\frac{u(x_j,y_k,z_l+h,t^i)-u(x_j,y_k,z_l-h,t^i)}{2h}-\frac{u^i_{j,k,l+1}-u^i_{j,k,l-1}}{2h}\right)^2h^3\Bigg]^{\frac{1}{2}}\label{eqn:bulk_h1}
\end{align}
where $u^{i}_{j,k,l}\approx u(x_j,y_k,z_l,t^{i})$ and $u^i_h$ denotes also the numerical approximation to the exact solution at time $t^i$ using grid spacing $h$. Also, ${1}_{M^+}$ is the characteristic function for the point set $M^+$.

Additionally, we consider the $\infty$-norm error for the components in the gradient of the bulk solution $u^{i}_{j,k,l}$ at time level $t^i$. For example, the $\infty$-norm error of the $x$-component can be computed using the following formula:
\begin{align}\label{eqn:x-component}
 E_{\infty(\Omega)}=\max_{i,j,k,l}{1}_{M^+}\left|\frac{u(x_j+h,y_k,z_l,t^i)-u(x_j-h,y_k,z_l,t^i)}{2h}-\frac{u^i_{j+1,k,l}-u^i_{j-1,k,l}}{2h}\right|,
\end{align}
and the errors in $y,z$-components are computed similarly.

The approximations to the $\infty$-, $L^2$-norm and $H^1$-norm errors on the surface are computed using the following formulas respectively:
\begin{align}
||v-v^i_h||_{\infty(\Gamma)}\approx E_{\infty(\Gamma)} = \max_{i,j,k}\left|v(R,\theta_j,\varphi_k,t^i)-v^i_{j,k}\right|\label{eqn:surf_infty}
\end{align}

\begin{align}
||v-v^i_h||_{L^2(\Gamma)}\approx E_{L^2(\Gamma)} = \max_{i}\left[\sum_{j,k}\left(v(R,\theta_j,\varphi_k,t^i)-v^i_{j,k}\right)^2\sin\theta_j\Delta\theta\Delta\varphi\right]^{\frac{1}{2}}\label{eqn:surf_l2}
\end{align}

\begin{align}
&||v-v^i_h||_{H^1(\Gamma)}\approx E_{H^1(\Gamma)} = \max_{i}\Bigg[\sum_{j,k}\left(v(R,\theta_j,\varphi_k,t^i)-v^i_{j,k}\right)^2\sin\theta_j\Delta\theta\Delta\varphi\nonumber\\
&\quad+\left(\frac{v(R,\theta_j+\Delta\theta,\varphi_k,t^i)-v(R,\theta_j,\varphi_k,t^i)}{R\Delta\theta}-\frac{v^i_{j+1,k}-v^i_{j,k}}{R\Delta\theta}\right)^2\sin\theta_j\Delta\theta\Delta\varphi\nonumber\\
&\quad+\left(\frac{v(R,\theta_j,\varphi_k+\Delta\varphi,t^i)-v(R,\theta_j,\varphi_k,t^i)}{R\sin\theta_j\Delta\varphi}-\frac{v^i_{j,k+1}-v^i_{j,k}}{R\sin\theta_j\Delta\varphi}\right)^2\sin\theta_j\Delta\theta\Delta\varphi\Bigg]^{\frac{1}{2}}\label{eqn:surf_h1}
\end{align}
where $v^{i}_{j,k}\approx v(R,\theta_j,\varphi_k,t^{i})$ and $v^i_h$ denotes also the numerical approximation of the exact solution at time $t^i$. The increments in the discretization of $\theta$ and $\varphi$ are $\Delta\theta$ and $\Delta\varphi$ respectively. Moreover, in \eqref{eqn:surf_h1}, we require $\sin\theta_j\neq0$. For the surface errors of the model \eqref{eqn:dynamic-eq}--\eqref{eqn:dynamic-ic} with dynamic boundary condition, one simply replaces $v$ with $u$ in the formulas \eqref{eqn:surf_infty}--\eqref{eqn:surf_h1}.

Note that, for all the $\infty$-, $L^2$- and $H^1$-norm errors in space, the $\infty$-norm is taken in time.

%%%%%%%%%%%%%%%%%%%%%%%%%%%%%%%%%%%%%%%%%%%%%%%%%%%%%%%%%%%%%%%%%%%%%%%%%%%%%%%%%%%%%%%%%%%%%%%%%%%%%%%%%
\subsection{Dynamic Boundary Conditions}\label{sec:numerics-dbc}
In this subsection, we present the numerical results for models \eqref{eqn:dynamic-eq}--\eqref{eqn:dynamic-ic} with dynamic boundary conditions in a spherical domain with radius $R=0.5$.

\subsubsection{Test 1}

For the first test, we employ the exact solution $u(x,y,z,t)=e^t(x^2+2y^2+3z^2)$. The consideration of such a test problem is that it offers both simplicity and asymmetry in space.

\begin{table}
\centering
\footnotesize
\sisetup{
  output-exponent-marker = \text{ E},
  exponent-product={},
  retain-explicit-plus
}
\begin{tabular}{c S[table-format=1.4e2] c S[table-format=1.4e2] c S[table-format=1.4e2] c }
\toprule
{$N\times N\times N$} & {$E_{\infty(\Omega)}: u $} & Rate   & {$E_{L^2(\Omega)}: u$} & Rate & {$E_{H^1(\Omega)}: u$} & Rate\\
\midrule
$ 31 \times 31 \times 31$      & 5.7519E-06 & \textemdash & 3.4724E-06 & \textemdash & 4.7239E-06 & \textemdash\\
$ 63 \times 63 \times 63$      & 1.6449E-06 & 1.81        & 9.3307E-07 & 1.90        & 1.2730E-06 & 1.89\\
$ 127 \times 127 \times 127$   & 4.0469E-07 & 2.02        & 2.3127E-07 & 2.01        & 3.1149E-07 & 2.03\\
$ 255 \times 255 \times 255$   & 1.0445E-07 & 1.95        & 5.8647E-08 & 1.98        & 7.9459E-08 & 1.97\\
\midrule
{$N\times N\times N$} & {$E_{\infty(\Gamma)}: u$} & Rate   & {$E_{L^2(\Gamma)}: u$} & Rate & {$E_{H^1(\Gamma)}: u$} & Rate\\
\midrule
$ 31 \times 31 \times 31$      & 5.8021E-06 & \textemdash & 8.9307E-06 & \textemdash & 9.6440E-06 & \textemdash\\
$ 63 \times 63 \times 63$      & 1.6613E-06 & 1.80        & 2.4687E-06 & 1.86        & 2.7621E-06 & 1.80\\
$ 127 \times 127 \times 127$   & 4.0576E-07 & 2.03        & 6.1087E-07 & 2.01        & 6.7467E-07 & 2.03\\
$ 255 \times 255 \times 255$   & 1.0469E-07 & 1.95        & 1.5629E-07 & 1.97        & 1.7384E-07 & 1.96\\
\midrule
{$N\times N\times N$} & {$E_{\infty(\Omega)}:\nabla_xu$} & Rate   & {$E_{\infty(\Omega)}:\nabla_y u$} & Rate & {$E_{\infty(\Omega)}:\nabla_z u$} & Rate\\
\midrule
$ 31 \times 31 \times 31$      & 1.8856E-06 & \textemdash & 4.6662E-06 & \textemdash & 7.7164E-06 & \textemdash\\
$ 63 \times 63 \times 63$      & 4.1092E-07 & 2.20        & 1.1122E-06 & 2.07        & 1.8944E-06 & 2.03\\
$ 127 \times 127 \times 127$   & 9.8629E-08 & 2.06        & 2.7337E-07 & 2.02        & 4.6526E-07 & 2.03\\
$ 255 \times 255 \times 255$   & 2.4337E-08 & 2.02        & 6.7944E-08 & 2.01        & 1.1611E-07 & 2.00\\
\bottomrule
\end{tabular}
\caption{Convergence of the $\infty$-, $L^2$- and $H^1$-norm errors of the solutions in the bulk/surface, and the ${\infty}$-norm errors of the gradients in the bulk for the dynamic BC model \eqref{eqn:dynamic-eq}--\eqref{eqn:dynamic-ic} with exact solution $u=e^t(x^2+2y^2+3z^2)$ until final time $T=0.1$ in the sphere of $R=0.5$. The number of spherical harmonics for term $u$ is 9.}
\label{table:dynamicBC-T1}
\end{table}

In Table~\ref{table:dynamicBC-T1}, we observe that in the bulk, the $L^2$-norm errors are smaller than the $\infty$ errors, which is as expected. However, on the surface, the $L^2$-norm errors are larger than the $\infty$-norm errors. This can be explained by the following estimate of the $L^2$-norm errors:
\begin{align}
E_{L^2(\Gamma)}&=\max_i\left[\sum_{j,k}\left(u(R,\theta_j,\varphi_k,t^i)-u^i_{j,k}\right)^2\sin\theta_j\Delta\theta\Delta\varphi\right]^{\frac{1}{2}}\label{eqn:inequality1}\\
&\leq \max_{i,j,k}\left|u(R,\theta_j,\varphi_k,t^i)-u^i_{j,k}\right|\left[\sum_{j,k}\sin\theta_j\Delta\theta\Delta\varphi\right]^{\frac{1}{2}}\\
&\approx\sqrt{4\pi R^2}\max_{i,j,k}\left|u(R,\theta_j,\varphi_k,t^i)-u^i_{j,k}\right|\\
&=2R\sqrt{\pi}E_{{\infty}(\Gamma)}\label{eqn:inequality4}
\end{align}
We observe the overall second-order convergence in all norms of the
errors for solutions, both on the surface and in the bulk. Note that, the $\infty$-norm errors of the gradients in the bulk also obey the second-order convergence, as well as the $H^1$-norm errors in the bulk and on the surface.

%%%%%%%%%%%%%%%%%%%%%%%%%%%%%%%%%%%%%%%%%%%%%%%%%%%%%%%%%%%%%%%%%%%%%%%%%
\begin{table}
\centering
\footnotesize
\sisetup{
  output-exponent-marker = \text{ E},
  exponent-product={},
  retain-explicit-plus
}
\begin{tabular}{c S[table-format=1.4e2] c S[table-format=1.4e2] c S[table-format=1.4e2] c }
\toprule
{$N\times N\times N$} & {$E_{\infty(\Omega)}: u $} & Rate   & {$E_{L^2(\Omega)}: u$} & Rate & {$E_{H^1(\Omega)}: u$} & Rate\\
\midrule
$ 31 \times 31 \times 31$      & 1.6347E-04 & \textemdash & 4.8011E-05 & \textemdash & 2.9914E-04 & \textemdash\\
$ 63 \times 63 \times 63$      & 2.4156E-05 & 2.76        & 6.4637E-06 & 2.89        & 4.9519E-05 & 2.59\\
$ 127 \times 127 \times 127$   & 3.4620E-06 & 2.80        & 9.1209E-07 & 2.83        & 8.3727E-06 & 2.56\\
$ 255 \times 255 \times 255$   & 5.5230E-07 & 2.65        & 1.4151E-07 & 2.69        & 1.4736E-06 & 2.51\\
\midrule
{$N\times N\times N$} & {$E_{\infty(\Gamma)}: u$} & Rate   & {$E_{L^2(\Gamma)}: u$} & Rate & {$E_{H^1(\Gamma)}: u$} & Rate\\
\midrule
$ 31 \times 31 \times 31$      & 1.6052E-05 & \textemdash & 1.1379E-05 & \textemdash & 3.3818E-05 & \textemdash\\
$ 63 \times 63 \times 63$      & 3.0297E-06 & 2.41        & 3.0092E-06 & 1.92        & 5.5188E-06 & 2.62\\
$ 127 \times 127 \times 127$   & 4.7815E-07 & 2.66        & 6.9355E-07 & 2.12        & 8.9181E-07 & 2.63\\
$ 255 \times 255 \times 255$   & 1.1105E-07 & 2.11        & 1.6069E-07 & 2.11        & 1.7645E-07 & 1.96\\
\midrule
{$N\times N\times N$} & {$E_{\infty(\Omega)}:\nabla_xu$} & Rate   & {$E_{\infty(\Omega)}:\nabla_y u$} & Rate & {$E_{\infty(\Omega)}:\nabla_z u$} & Rate\\
\midrule
$ 31 \times 31 \times 31$      & 1.8435E-03 & \textemdash & 1.8622E-03 & \textemdash & 1.8753E-03 & \textemdash\\
$ 63 \times 63 \times 63$      & 4.8836E-04 & 1.92        & 4.5222E-04 & 2.04        & 4.6395E-04 & 2.02\\
$ 127 \times 127 \times 127$   & 1.2831E-04 & 1.93        & 1.1405E-04 & 1.99        & 1.0923E-04 & 2.09\\
$ 255 \times 255 \times 255$   & 3.4607E-05 & 1.89        & 3.2892E-05 & 1.79        & 3.1182E-05 & 1.81\\
\bottomrule
\end{tabular}
\caption{Convergence: $d$ perturbed by $\epsilon h^3$ for for the dynamic BC model \eqref{eqn:dynamic-eq}--\eqref{eqn:dynamic-ic} with exact solution $u=e^t(x^2+2y^2+3z^2)$ until final time $T=0.1$ in the sphere of $R=0.5$. The number of spherical harmonics for term $u$ is 9.}
\label{table:d-perturbed}
\end{table}

%%%%%%%%%%%%%%%%%%%%%%%%%%%%%%%%%%%%%%%%%%%%%%%%%%%%%%%%%%%%%%%%%%%%%%%

\begin{table}
\centering
\footnotesize
\sisetup{
  output-exponent-marker = \text{ E},
  exponent-product={},
  retain-explicit-plus
}
\begin{tabular}{c S[table-format=1.4e2] c S[table-format=1.4e2] c S[table-format=1.4e2] c }
\toprule
{$N\times N\times N$} & {$E_{\infty(\Omega)}: u $} & Rate   & {$E_{L^2(\Omega)}: u$} & Rate & {$E_{H^1(\Omega)}: u$} & Rate\\
\midrule
$ 31 \times 31 \times 31$      & 3.1314E-05 & \textemdash & 5.5099E-06 & \textemdash & 5.8781E-05 & \textemdash\\
$ 63 \times 63 \times 63$      & 4.9768E-06 & 2.65        & 1.0797E-06 & 2.35        & 1.1369E-05 & 2.37\\
$ 127 \times 127 \times 127$   & 9.7584E-07 & 2.35        & 2.3402E-07 & 2.21        & 2.2574E-06 & 2.33\\
$ 255 \times 255 \times 255$   & 1.8121E-07 & 2.43        & 5.9103E-08 & 1.99        & 4.7119E-07 & 2.26\\
\midrule
{$N\times N\times N$} & {$E_{\infty(\Gamma)}: u$} & Rate   & {$E_{L^2(\Gamma)}: u$} & Rate & {$E_{H^1(\Gamma)}: u$} & Rate\\
\midrule
$ 31 \times 31 \times 31$      & 3.0261E-05 & \textemdash & 2.6860E-05 & \textemdash & 7.6184E-05 & \textemdash\\
$ 63 \times 63 \times 63$      & 4.7251E-06 & 2.68        & 4.0207E-06 & 2.74        & 9.6476E-06 & 2.98\\
$ 127 \times 127 \times 127$   & 7.7908E-07 & 2.61        & 7.1346E-07 & 2.49        & 1.3434E-06 & 2.84\\
$ 255 \times 255 \times 255$   & 1.5157E-07 & 2.36        & 1.6333E-07 & 2.13        & 2.2525E-07 & 2.58\\
\midrule
{$N\times N\times N$} & {$E_{\infty(\Omega)}:\nabla_xu$} & Rate   & {$E_{\infty(\Omega)}:\nabla_y u$} & Rate & {$E_{\infty(\Omega)}:\nabla_z u$} & Rate\\
\midrule
$ 31 \times 31 \times 31$      & 4.7534E-04 & \textemdash & 3.6962E-04 & \textemdash & 4.0036E-04 & \textemdash\\
$ 63 \times 63 \times 63$      & 1.3972E-04 & 1.77        & 1.0855E-04 & 1.77        & 1.2844E-04 & 1.64\\
$ 127 \times 127 \times 127$   & 3.7348E-05 & 1.90        & 2.8852E-05 & 1.91        & 3.4981E-05 & 1.88\\
$ 255 \times 255 \times 255$   & 1.0169E-05 & 1.88        & 7.8385E-06 & 1.88        & 9.6735E-06 & 1.85\\
\bottomrule
\end{tabular}
\caption{Convergence: $\theta$ perturbed by $\epsilon h^3$ for for the dynamic BC model \eqref{eqn:dynamic-eq}--\eqref{eqn:dynamic-ic} with exact solution $u=e^t(x^2+2y^2+3z^2)$ until final time $T=0.1$ in the sphere of $R=0.5$. The number of spherical harmonics for term $u$ is 9.}\label{table:theta-perturbed}
\end{table}

%%%%%%%%%%%%%%%%%%%%%%%%%%%%%%%%%%%%%%%%%%%%%%%%%%%%%%%%%%%%%%%%%%%
\begin{table}
\centering
\footnotesize
\sisetup{
  output-exponent-marker = \text{ E},
  exponent-product={},
  retain-explicit-plus
}
\begin{tabular}{c S[table-format=1.4e2] c S[table-format=1.4e2] c S[table-format=1.4e2] c }
\toprule
{$N\times N\times N$} & {$E_{\infty(\Omega)}: u $} & Rate   & {$E_{L^2(\Omega)}: u$} & Rate & {$E_{H^1(\Omega)}: u$} & Rate\\
\midrule
$ 31 \times 31 \times 31$      & 1.2008E-05 & \textemdash & 3.5593E-006 & \textemdash & 2.1347E-005 & \textemdash\\
$ 63 \times 63 \times 63$      & 2.2351E-06 & 2.43        & 9.5723E-007 & 2.89        & 4.1701E-006 & 2.59\\
$ 127 \times 127 \times 127$   & 4.8683E-07 & 2.20        & 2.3412E-007 & 2.83        & 7.9991E-007 & 2.56\\
$ 255 \times 255 \times 255$   & 1.0523E-07 & 2.21        & 5.8696E-008 & 2.69        & 1.5425E-007 & 2.51\\
\midrule
{$N\times N\times N$} & {$E_{\infty(\Gamma)}: u$} & Rate   & {$E_{L^2(\Gamma)}: u$} & Rate & {$E_{H^1(\Gamma)}: u$} & Rate\\
\midrule
$ 31 \times 31 \times 31$      & 1.3088E-05 & \textemdash & 1.1687E-005 & \textemdash & 3.8782E-005 & \textemdash\\
$ 63 \times 63 \times 63$      & 2.2948E-06 & 2.51        & 2.6848E-006 & 1.92        & 5.2359E-006 & 2.62\\
$ 127 \times 127 \times 127$   & 4.4200E-07 & 2.38        & 6.2695E-007 & 2.12        & 8.5292E-007 & 2.63\\
$ 255 \times 255 \times 255$   & 1.0517E-07 & 2.07        & 1.5687E-007 & 2.11        & 1.8487E-007 & 1.96\\
\midrule
{$N\times N\times N$} & {$E_{\infty(\Omega)}:\nabla_xu$} & Rate   & {$E_{\infty(\Omega)}:\nabla_y u$} & Rate & {$E_{\infty(\Omega)}:\nabla_z u$} & Rate\\
\midrule
$ 31 \times 31 \times 31$      & 1.6985E-004 & \textemdash & 1.6487E-004 & \textemdash & 1.4704E-004 & \textemdash\\
$ 63 \times 63 \times 63$      & 4.8808E-005 & 1.92        & 5.2182E-005 & 2.04        & 4.5240E-005 & 2.02\\
$ 127 \times 127 \times 127$   & 1.4275E-005 & 1.93        & 1.4329E-005 & 1.99        & 1.2582E-005 & 2.09\\
$ 255 \times 255 \times 255$   & 3.9625E-006 & 1.89        & 3.9751E-006 & 1.79        & 3.6156E-006 & 1.81\\
\bottomrule
\end{tabular}
\caption{Convergence: $\varphi$ perturbed by $\epsilon h^3$ for for the dynamic BC model \eqref{eqn:dynamic-eq}--\eqref{eqn:dynamic-ic} with exact solution $u=e^t(x^2+2y^2+3z^2)$ until final time $T=0.1$ in the sphere of $R=0.5$. The number of spherical harmonics for term $u$ is 9.}
\label{table:phi-perturbed}
\end{table}

\begin{table}
\centering
\footnotesize
\sisetup{
  output-exponent-marker = \text{ E},
  exponent-product={},
  retain-explicit-plus
}
\begin{tabular}{c S[table-format=1.4e2] c S[table-format=1.4e2] c S[table-format=1.4e2] c }
\toprule
{$N\times N\times N$} & {$E_{\infty(\Omega)}: u $} & Rate   & {$E_{L^2(\Omega)}: u$} & Rate & {$E_{H^1(\Omega)}: u$} & Rate\\
\midrule
$ 31 \times 31 \times 31$      & 1.6943E-004 & \textemdash & 4.8339E-005 & \textemdash & 3.0187E-004 & \textemdash\\
$ 63 \times 63 \times 63$      & 2.4949E-005 & 2.76        & 6.5310E-006 & 2.89        & 5.0688E-005 & 2.59\\
$ 127 \times 127 \times 127$   & 3.4388E-006 & 2.80        & 9.1301E-007 & 2.83        & 8.6516E-006 & 2.56\\
$ 255 \times 255 \times 255$   & 5.5585E-007 & 2.65        & 1.4177E-007 & 2.69        & 1.5262E-006 & 2.51\\
\midrule
{$N\times N\times N$} & {$E_{\infty(\Gamma)}: u$} & Rate   & {$E_{L^2(\Gamma)}: u$} & Rate & {$E_{H^1(\Gamma)}: u$} & Rate\\
\midrule
$ 31 \times 31 \times 31$      & 3.0730E-005 & \textemdash & 2.6648E-005 & \textemdash & 8.3318E-005 & \textemdash\\
$ 63 \times 63 \times 63$      & 4.4242E-006 & 2.41        & 3.8308E-006 & 1.92        & 8.5970E-006 & 2.62\\
$ 127 \times 127 \times 127$   & 8.4725E-007 & 2.66        & 8.1565E-007 & 2.12        & 1.5920E-006 & 2.63\\
$ 255 \times 255 \times 255$   & 1.4816E-007 & 2.11        & 1.6760E-007 & 2.11        & 2.3146E-007 & 1.96\\
\midrule
{$N\times N\times N$} & {$E_{\infty(\Omega)}:\nabla_xu$} & Rate   & {$E_{\infty(\Omega)}:\nabla_y u$} & Rate & {$E_{\infty(\Omega)}:\nabla_z u$} & Rate\\
\midrule
$ 31 \times 31 \times 31$      & 2.0305E-003 & \textemdash & 1.7989E-003 & \textemdash & 1.7563E-003 & \textemdash\\
$ 63 \times 63 \times 63$      & 5.5069E-004 & 1.92        & 5.0127E-004 & 2.04        & 5.0564E-004 & 2.02\\
$ 127 \times 127 \times 127$   & 1.3554E-004 & 1.93        & 1.2851E-004 & 1.99        & 1.2397E-004 & 2.09\\
$ 255 \times 255 \times 255$   & 3.9576E-005 & 1.89        & 3.5780E-005 & 1.79        & 3.2330E-005 & 1.81\\
\bottomrule
\end{tabular}
\caption{Convergence: $d,\theta,\varphi$ perturbed by $\epsilon h^3$ for for the dynamic BC model \eqref{eqn:dynamic-eq}--\eqref{eqn:dynamic-ic} with exact solution $u=e^t(x^2+2y^2+3z^2)$ until final time $T=0.1$ in the sphere of $R=0.5$. The number of spherical harmonics for term $u$ is 9.}
\label{table:all-perturbed}
\end{table}

In Tables \ref{table:d-perturbed}--\ref{table:all-perturbed}, we present the convergence results for the dynamic BC model \eqref{eqn:dynamic-eq}--\eqref{eqn:dynamic-ic} with perturbed $d$, $\theta$, and $\varphi$ in the extension operator \eqref{eqn:extension_operator}. (The perturbations in $\theta$ and $\varphi$ mimic the ``errors'' in the normal vector $n$.) We investigated numerically with different choices of perturbations and present the results with $\epsilon h^3$ ($\epsilon$ is a pseudo-random number sampled uniformly from $[0,1]$), which preserves the second-order accuracy of the solution in  $\infty$-, $L^2$- and $H^1$-norm, and the gradient components in $\infty$-norm. Since the random perturbation is added at every point in $\gamma$ set, the total perturbation is in the order of $\mathcal{O}(h)$.
Note that tests with perturbations in Tables \ref{table:d-perturbed}--\ref{table:all-perturbed} illustrate that the proposed DPM-based algorithm preserves the second-order accuracy even in situations where the signed distances and the normal vectors to the surface boundary are not known exactly.

\begin{figure}
    \centering
    \begin{subfigure}[b]{0.49\textwidth}
        \includegraphics[width=\textwidth]{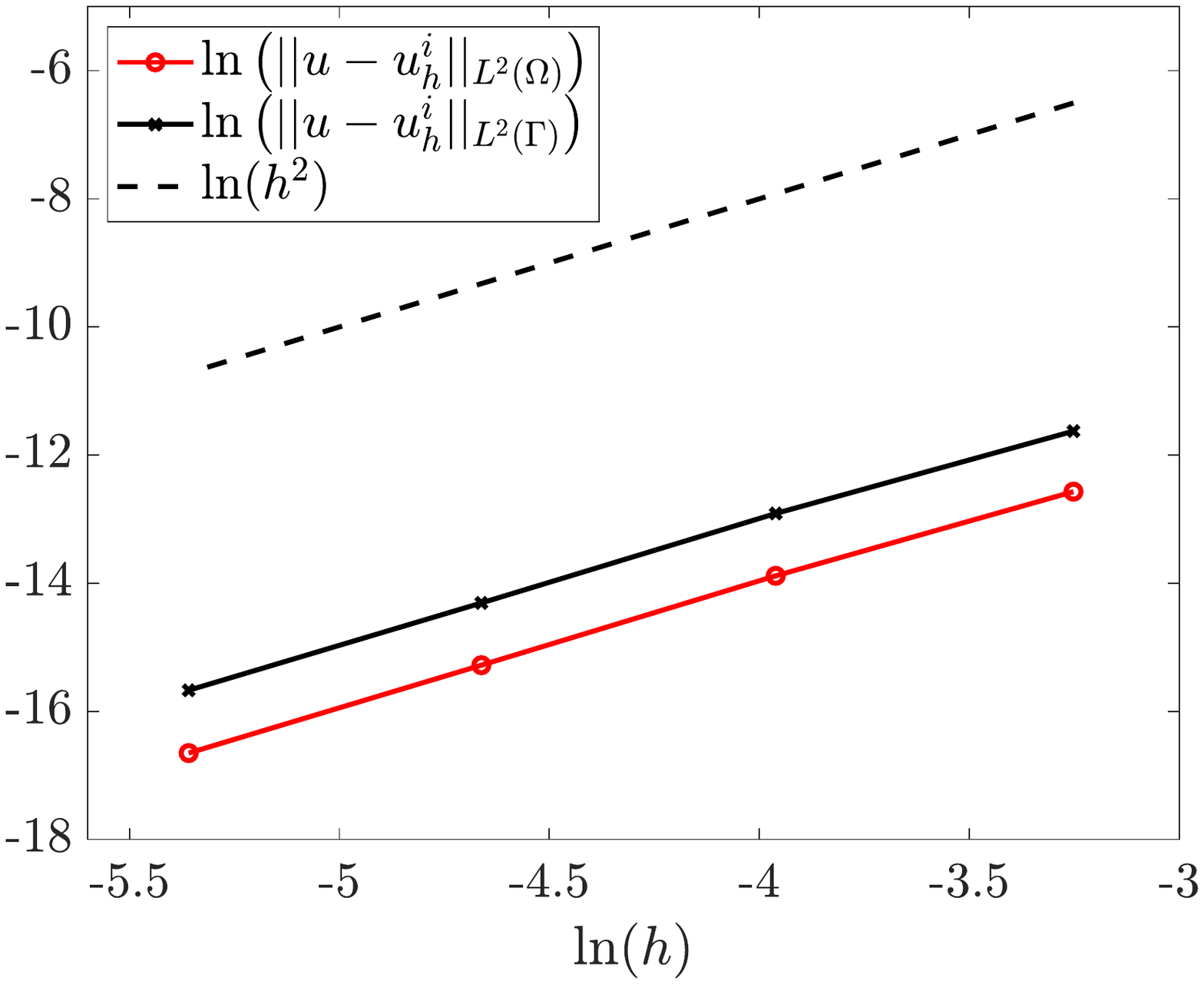}
        \caption{}
    \end{subfigure}
    \begin{subfigure}[b]{0.49\textwidth}
        \includegraphics[width=\textwidth]{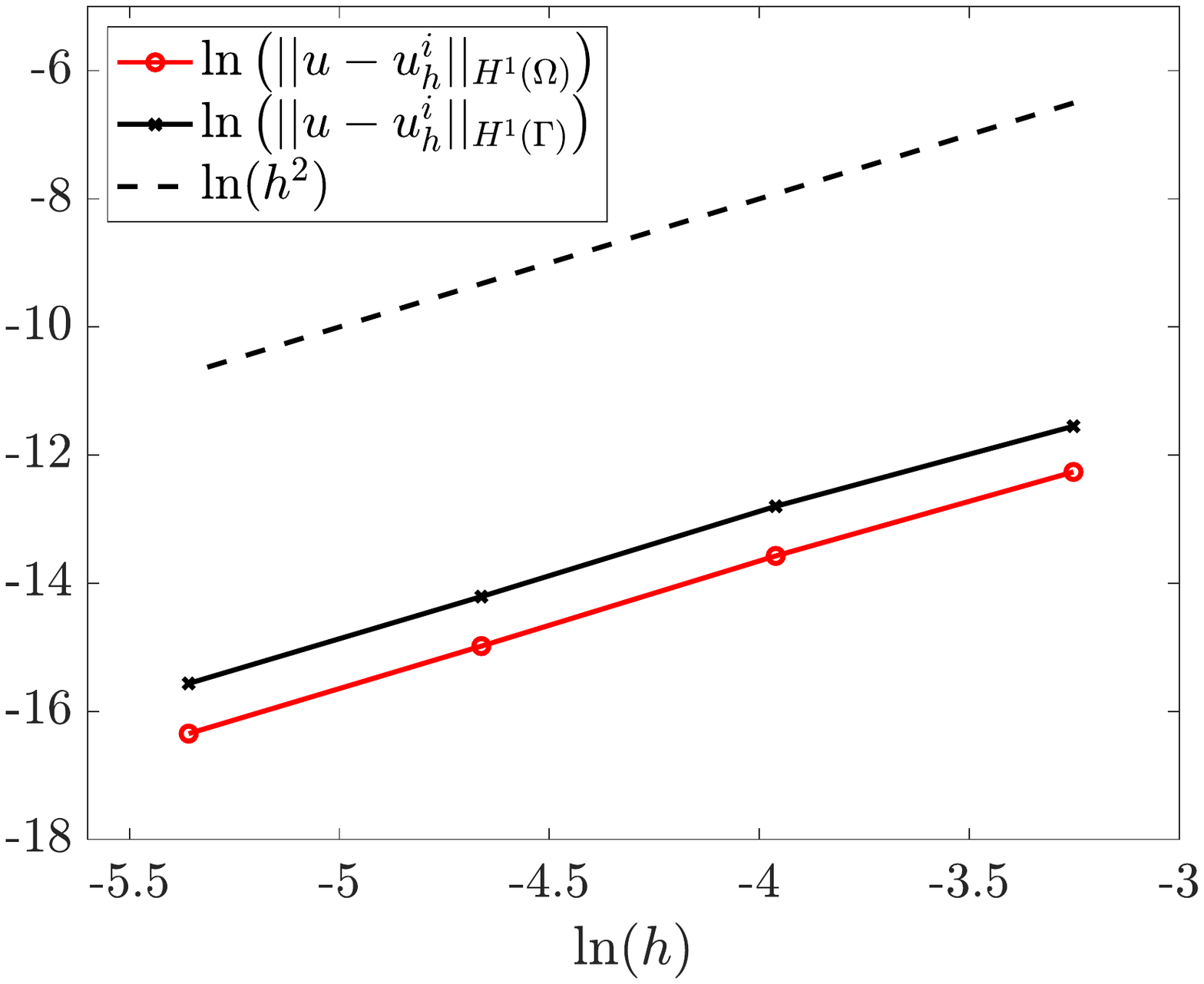}
        \caption{}
    \end{subfigure}
    \caption{Log-log plots of bulk/surface $L^2$-norm errors (left
      figure) and bulk/surface $H^1$-norm errors (right figure) for
      the dynamic BC model
      \eqref{eqn:dynamic-eq}--\eqref{eqn:dynamic-ic} with the exact solution $u=e^t(x^2+2y^2+3z^2)$ in the sphere of $R=0.5$.}
    \label{fig:dynamicBC-T1-errors}
\end{figure}

\begin{figure}
    \centering
    \begin{subfigure}[b]{0.7\textwidth}
        \includegraphics[width=\textwidth]{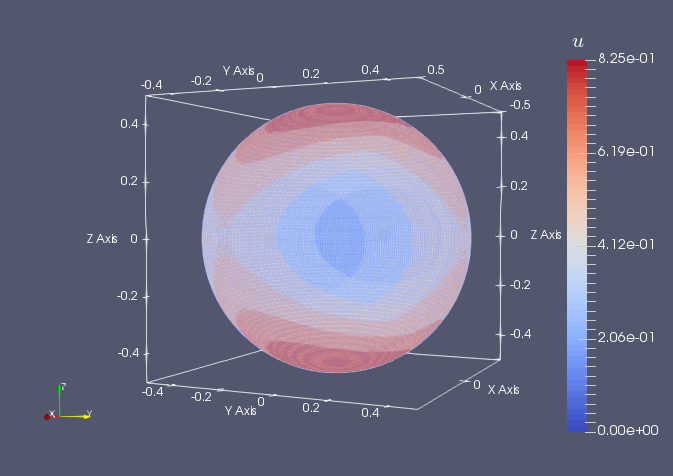}
        \caption{}
    \end{subfigure}
    \begin{subfigure}[b]{0.7\textwidth}
        \includegraphics[width=\textwidth]{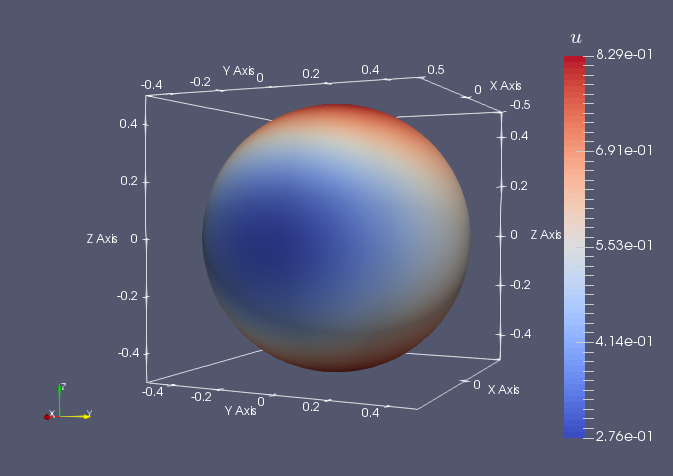}
        \caption{}
    \end{subfigure}
    \caption{3D views of the bulk (top figure) and the
      surface (bottom figure) approximations from mesh
      $255\times255\times255$ at $T=0.1$ to the dynamic BC model
      \eqref{eqn:dynamic-eq}--\eqref{eqn:dynamic-ic} with the exact solution $u=e^t(x^2+2y^2+3z^2)$ in the sphere of $R=0.5$.}
    \label{fig:dynamicBC-T1-bulk-surf-3dview}
\end{figure}

In Fig.~\ref{fig:dynamicBC-T1-errors}, we observe that the
  behavior of the $L^2$- and $H^1$-norm errors are very similar in
the bulk and on the surface. Besides, the errors in the bulk are
smaller than the errors on the surface in both norms. Also,
as already mentioned, the $H^1$-norm errors give the
second-order convergence, as opposed to the results obtained, for
example, from the finite element method, e.g.,
\cite{Hansbo_2016}. Moreover, the errors are far below the reference
dashed lines, which implies a small error constant  in \eqref{eqn:accuracy}.
In Fig. \ref{fig:dynamicBC-T1-bulk-surf-3dview}, we show the 3D isosurface plots (analogous to the contour plots in 2D) in the top figure and the plot of surface solution in the bottom figure, obtained using mesh $255\times255\times255$ at the final time $T=0.1$. 

\subsubsection{Test 2}
In this subsection, we use the exact solution $u(x,y,z,t)=e^t\sin(x)\sin(2y)\sin(3z)$. Compared to the first test, this choice of test is more oscillatory and requires a larger number of spherical harmonics to resolve $u$ and $u_{rr}$ accurately on the boundary $\Gamma$. Nevertheless, the total number of harmonics is still much less than $|\gamma_{in}|$, see Table~\ref{table:dynamicBC-T2}, for example.

\begin{table}
\centering
\footnotesize
\sisetup{
  output-exponent-marker = \text{ E},
  exponent-product={},
  retain-explicit-plus
}
\begin{tabular}{c S[table-format=1.4e2] c S[table-format=1.4e2] c S[table-format=1.4e2] c }
\toprule
{$N\times N\times N$} & {$E_{\infty(\Omega)}: u $} & Rate   & {$E_{L^2(\Omega)}: u$} & Rate & {$E_{H^1(\Omega)}: u$} & Rate\\
\midrule
$ 31 \times 31 \times 31$      & 8.7147E-06 & \textemdash & 2.4039E-06 & \textemdash & 2.8357E-05 & \textemdash\\
$ 63 \times 63 \times 63$      & 1.7811E-06 & 2.29        & 5.3478E-07 & 2.17        & 5.6563E-06 & 2.33\\
$ 127 \times 127 \times 127$   & 4.3585E-07 & 2.03        & 1.3077E-07 & 2.03        & 1.3353E-06 & 2.08\\
$ 255 \times 255 \times 255$   & 1.0849E-07 & 2.01        & 3.2659E-08 & 2.00        & 3.3032E-07 & 2.02\\
\midrule
{$N\times N\times N$} & {$E_{\infty(\Gamma)}: u$} & Rate   & {$E_{L^2(\Gamma)}: u$} & Rate & {$E_{H^1(\Gamma)}: u$} & Rate\\
\midrule
$ 31 \times 31 \times 31$      & 5.8527E-06 & \textemdash & 5.1084E-06 & \textemdash & 3.7771E-05 & \textemdash\\
$ 63 \times 63 \times 63$      & 1.4134E-06 & 2.05        & 1.2405E-06 & 2.04        & 9.0795E-06 & 2.06\\
$ 127 \times 127 \times 127$   & 3.4714E-07 & 2.03        & 3.0530E-07 & 2.02        & 2.2280E-06 & 2.03\\
$ 255 \times 255 \times 255$   & 8.5729E-08 & 2.02        & 7.5389E-08 & 2.02        & 5.5028E-07 & 2.02\\
\midrule
{$N\times N\times N$} & {$E_{\infty(\Omega)}:\nabla_xu$} & Rate   & {$E_{\infty(\Omega)}:\nabla_y u$} & Rate & {$E_{\infty(\Omega)}:\nabla_z u$} & Rate\\
\midrule
$ 31 \times 31 \times 31$      & 8.5641E-05 & \textemdash & 9.0012E-05 & \textemdash & 1.6055E-04 & \textemdash\\
$ 63 \times 63 \times 63$      & 2.3716E-05 & 1.85        & 2.0313E-05 & 2.15        & 5.1744E-05 & 1.63\\
$ 127 \times 127 \times 127$   & 5.0683E-06 & 2.27        & 5.2314E-06 & 1.96        & 1.1111E-05 & 2.22\\
$ 255 \times 255 \times 255$   & 1.3229E-06 & 1.94        & 1.1629E-06 & 2.17        & 2.8860E-06 & 1.94\\
\bottomrule
\end{tabular}
\caption{Convergence of the $\infty$-, $L^2$- and $H^1$-norm errors of the solutions in the bulk/surface, and the ${\infty}$-norm errors of gradients in the bulk for the dynamic BC model \eqref{eqn:dynamic-eq}--\eqref{eqn:dynamic-ic} with the exact solution $u=e^t\sin(x)\sin(2y)\sin(3z)$ until final time $T=0.1$ in the sphere of $R=0.5$. The number of spherical harmonics for terms $u$ is 400 per each term.}
\label{table:dynamicBC-T2}
\end{table}

\begin{figure}
    \centering
    \begin{subfigure}[b]{0.49\textwidth}
        \includegraphics[width=\textwidth]{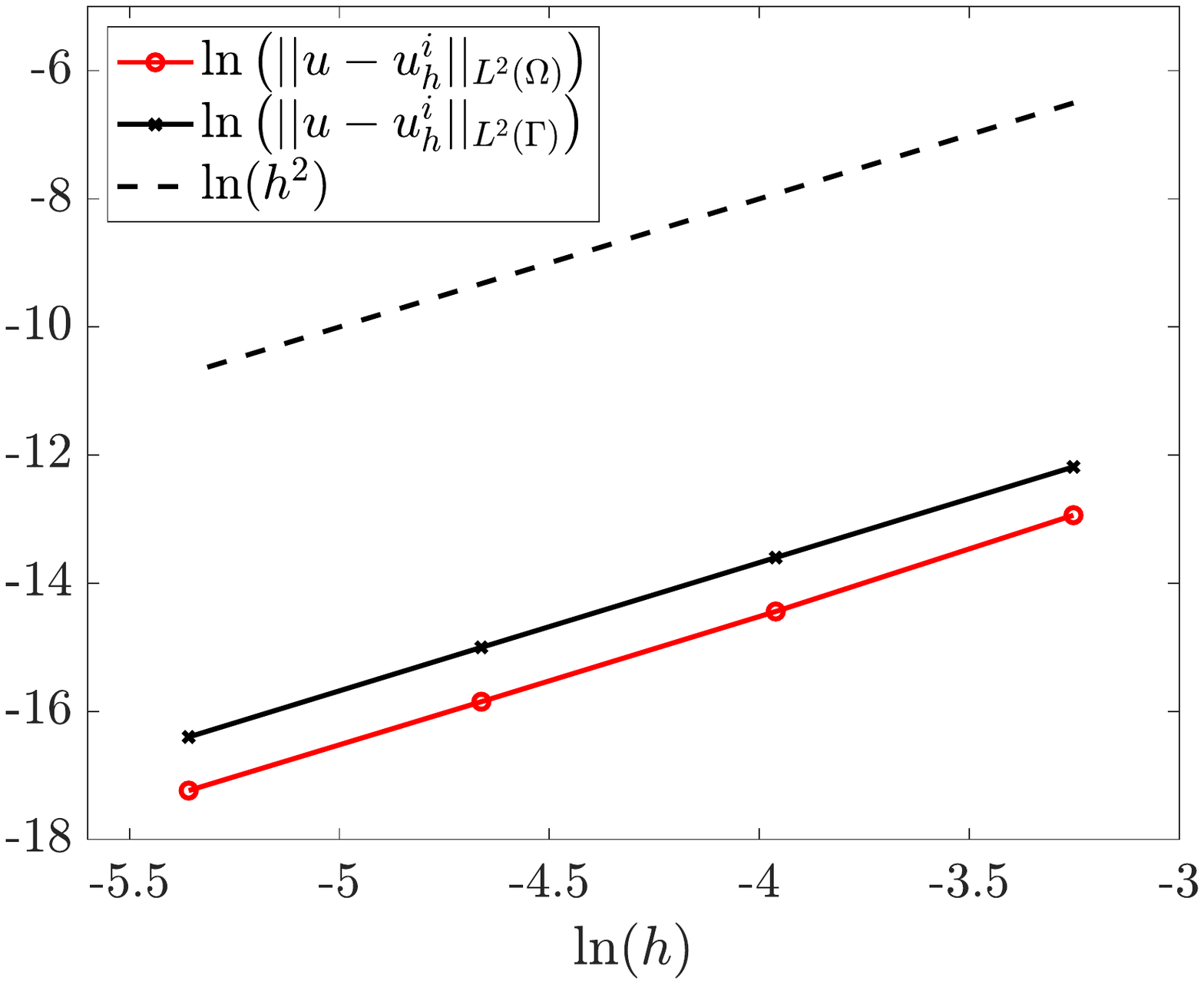}
        \caption{}
    \end{subfigure}
    \begin{subfigure}[b]{0.49\textwidth}
        \includegraphics[width=\textwidth]{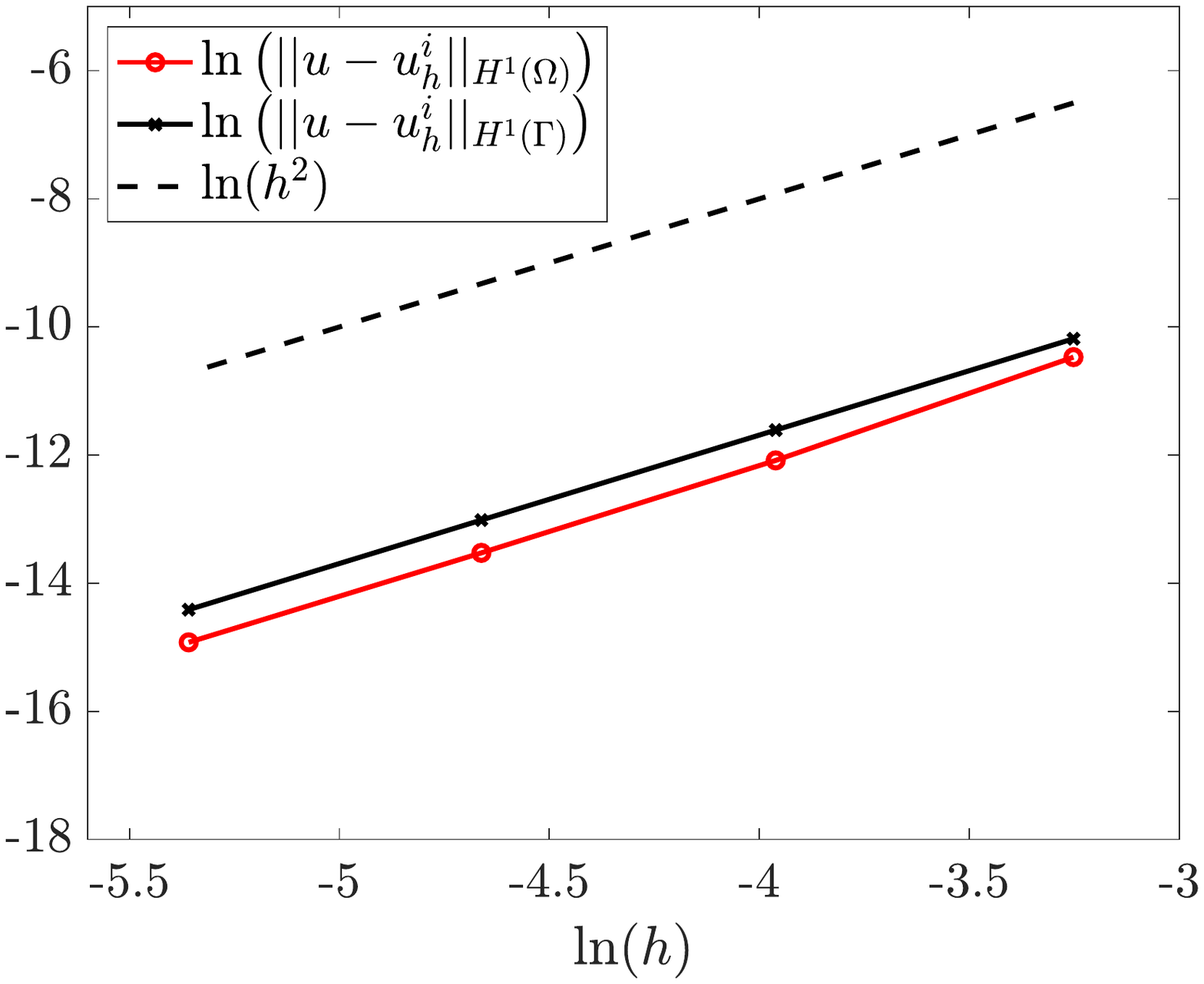}
        \caption{}
    \end{subfigure}
    \caption{Log-log plots of bulk/surface $L^2$-norm errors (left
      figure) and bulk/surface $H^1$-norm errors (right figure) for
      the dynamic BC model
      \eqref{eqn:dynamic-eq}--\eqref{eqn:dynamic-ic} with the exact solution $u=e^t\sin(x)\sin(2y)\sin(3z)$ in the sphere of $R=0.5$.}
    \label{fig:dynamicBC-T2-errors}
\end{figure}

\begin{figure}
    \centering
    \begin{subfigure}[b]{0.7\textwidth}
        \includegraphics[width=\textwidth]{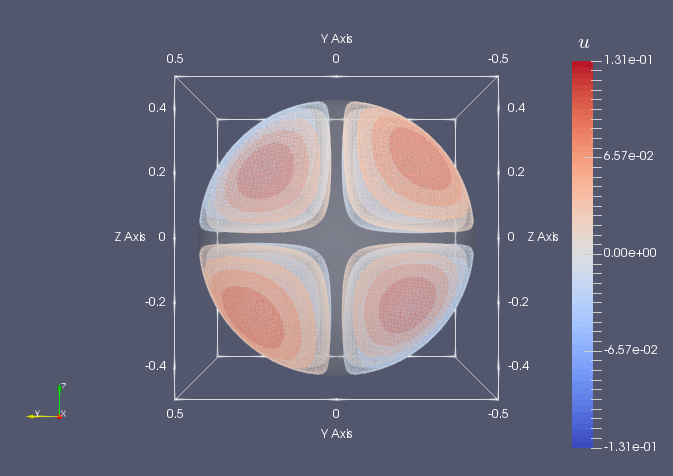}
        \caption{}
    \end{subfigure}
    \begin{subfigure}[b]{0.7\textwidth}
        \includegraphics[width=\textwidth]{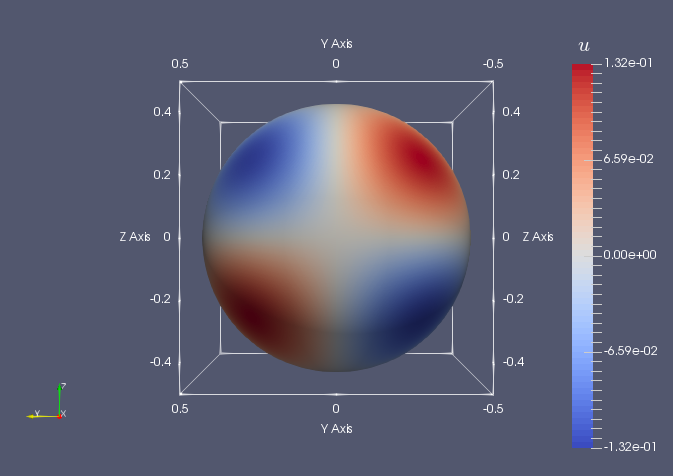}
        \caption{}
    \end{subfigure}
    \caption{3D views of the bulk (top figure) and the
      surface (bottom figure) approximations from mesh
      $255\times255\times255$ at $T=0.1$ to the dynamic BC model
      \eqref{eqn:dynamic-eq}--\eqref{eqn:dynamic-ic} with the exact solution $u=e^t\sin(x)\sin(2y)\sin(3z)$ in the sphere of $R=0.5$. }
    \label{fig:dynamicBC-T2-bulk-surf-3dview}
\end{figure} 

In Table~\ref{table:dynamicBC-T2}, again we observe second-order accuracy in all norms of the solutions in the bulk and on the surface.

Similarly, in Fig. \ref{fig:dynamicBC-T2-errors}, the errors in the bulk are smaller than the errors on the surface. In Fig. \ref{fig:dynamicBC-T2-bulk-surf-3dview}, we give the 3D isosurface plots in the top figure and the plot of surface solution in the bottom figure, obtained using mesh $255\times255\times255$ at final time $T=0.1$.

%%%%%%%%%%%%%%%%%%%%%%%%%%%%%%%%%%%%%%%%%%%%%%%%%%%%%%%%%%%%%%%%%%%
\subsection{Linear Bulk-Surface Coupling}
In this subsection, we present the numerical results for the model
\eqref{eqn:bulk}--\eqref{eqn:surface-ic}, with linear bulk-surface
coupling, i.e., $h(u,v)=u-v$ in a spherical domain of radius $R=1$. In
particular, the exact solutions
$u(x,y,z,t)=e^te^{-x(x-1)-y(y-1)}$ and $v(x,y,z,t) =
e^te^{-x(x-1)-y(y-1)}(1+x(1-2x)+y(1-2y))$ are such that the coupling
condition \eqref{eqn:coupling} is satisfied exactly on the surface
(the test is modification of the tests from \cite{Burman_2015,Elliott_2012}). Additionally, we provide numerical results to compare with the ones obtained using the cut finite element method in \cite{Burman_2015}.

\begin{remark}
We should note that the comparisons between the DPM-based
  method in this work and the cut-FEM approach in \cite{Burman_2015}
  are not precise, since the exact solutions
  $u(x,y,z)=e^{-x(x-1)-y(y-1)}$ and $v(x,y,z)
  =e^{-x(x-1)-y(y-1)}(1+x(1-2x)+y(1-2y))$ in \cite{Burman_2015}
are considered for the elliptic type bulk-surface problems. Nevertheless, we add $e^{t}$ in the exact solutions and take the $\infty$-norm errors in time, in the hope to discuss the difference and similarity between the two approaches.
\end{remark}

\begin{table}
\centering
\footnotesize
\sisetup{
  output-exponent-marker = \text{ E},
  exponent-product={},
  retain-explicit-plus
}
\begin{tabular}{c S[table-format=1.4e2] c S[table-format=1.4e2] c S[table-format=1.4e2] c }
\toprule
{$N\times N\times N$} & {$E_{\infty(\Omega)}: u $} & Rate   & {$E_{L^2(\Omega)}: u$} & Rate & {$E_{H^1(\Omega)}: u$} & Rate\\
\midrule
$ 31 \times 31 \times 31$      & 1.2537E-03 & \textemdash & 9.5344E-04 & \textemdash & 3.5388E-03 & \textemdash\\
$ 63 \times 63 \times 63$      & 2.9791E-04 & 2.07        & 2.2803E-04 & 2.06        & 7.0225E-04 & 2.33\\
$ 127 \times 127 \times 127$   & 7.2333E-05 & 2.04        & 5.5188E-05 & 2.05        & 1.7337E-04 & 2.02\\
$ 255 \times 255 \times 255$   & 1.7734E-05 & 2.03        & 1.3555E-05 & 2.03        & 4.2658E-05 & 2.02\\
\midrule
{$N\times N\times N$} & {$E_{\infty(\Gamma)}: v$} & Rate   & {$E_{L^2(\Gamma)}: v$} & Rate & {$E_{H^1(\Gamma)}: v$} & Rate\\
\midrule
$ 31 \times 31 \times 31$      & 9.3119E-05 & \textemdash & 1.2573E-04 & \textemdash & 2.9557E-04 & \textemdash\\
$ 63 \times 63 \times 63$      & 2.3982E-05 & 1.96        & 3.2923E-05 & 1.93        & 7.3660E-05 & 2.00\\
$ 127 \times 127 \times 127$   & 6.3155E-06 & 1.93        & 8.5321E-06 & 1.95        & 1.9781E-05 & 1.90\\
$ 255 \times 255 \times 255$   & 1.5774E-06 & 2.00        & 2.1147E-06 & 2.01        & 4.9585E-06 & 2.00\\
\midrule
{$N\times N\times N$} & {$E_{\infty(\Omega)}:\nabla_xu$} & Rate   & {$E_{\infty(\Omega)}:\nabla_y u$} & Rate & {$E_{\infty(\Omega)}:\nabla_z u$} & Rate\\
\midrule
$ 31 \times 31 \times 31$      & 8.2067E-03 & \textemdash & 8.2067E-03 & \textemdash & 2.5934E-03 & \textemdash\\
$ 63 \times 63 \times 63$      & 1.1452E-03 & 2.84        & 1.1452E-03 & 2.84        & 3.6088E-04 & 2.85\\
$ 127 \times 127 \times 127$   & 2.9309E-04 & 1.97        & 2.9309E-04 & 1.97        & 6.3228E-05 & 2.51\\
$ 255 \times 255 \times 255$   & 7.6635E-05 & 1.94        & 7.6635E-05 & 1.94        & 1.5690E-05 & 2.01\\
\bottomrule
\end{tabular}
\caption{Convergence of the $\infty$-, $L^2$- and $H^1$-norm errors of the solutions in the bulk/surface, and the ${\infty}$-norm errors of gradients in the bulk for the model \eqref{eqn:bulk}--\eqref{eqn:surface-ic} with linear bulk-surface coupling. The exact solutions are $u=e^te^{-x(x-1)-y(y-1)}$ and $v = e^te^{-x(x-1)-y(y-1)}(1+x(1-2x)+y(1-2y))$ until final time $T=0.1$ in the sphere of $R=1$. The number of spherical harmonics for terms $v$ and $u_{rr}$ is 529 per each term.}
\label{table:linear-coupling}
\end{table}

In Table~\ref{table:linear-coupling}, we observe second-order accuracy for all norms of the solutions in the bulk and on the surface, together with the second-order accuracy in the components of the gradients. The relative larger errors of $L^2$-norm on the surface, compared to the $\infty$-norm, again can be similarly explained by the inequalities \eqref{eqn:inequality1}--\eqref{eqn:inequality4}.

\begin{figure}
    \centering
    \begin{subfigure}[b]{0.49\textwidth}
        \includegraphics[width=\textwidth]{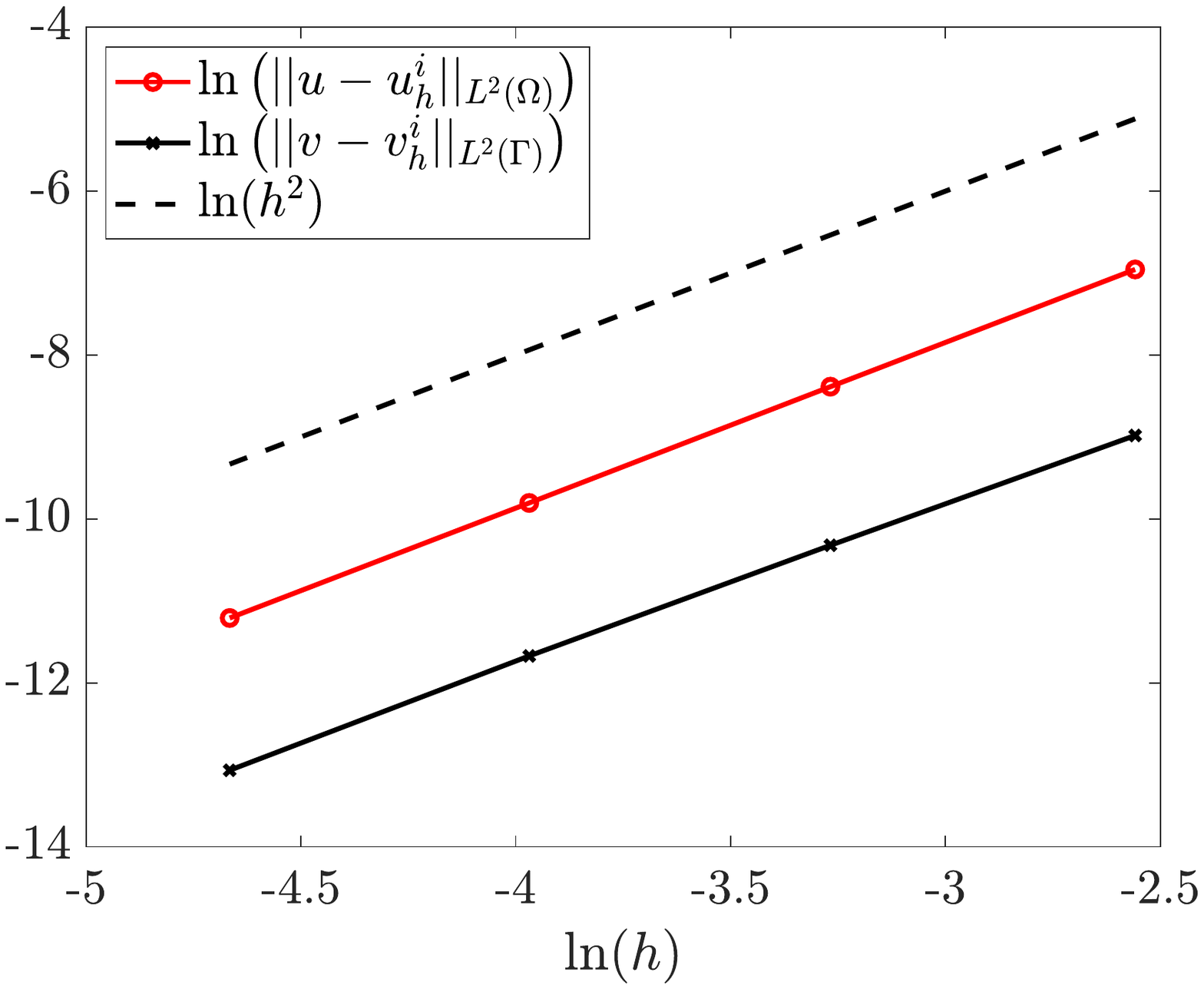}
        \caption{}
    \end{subfigure}
    \begin{subfigure}[b]{0.49\textwidth}
        \includegraphics[width=\textwidth]{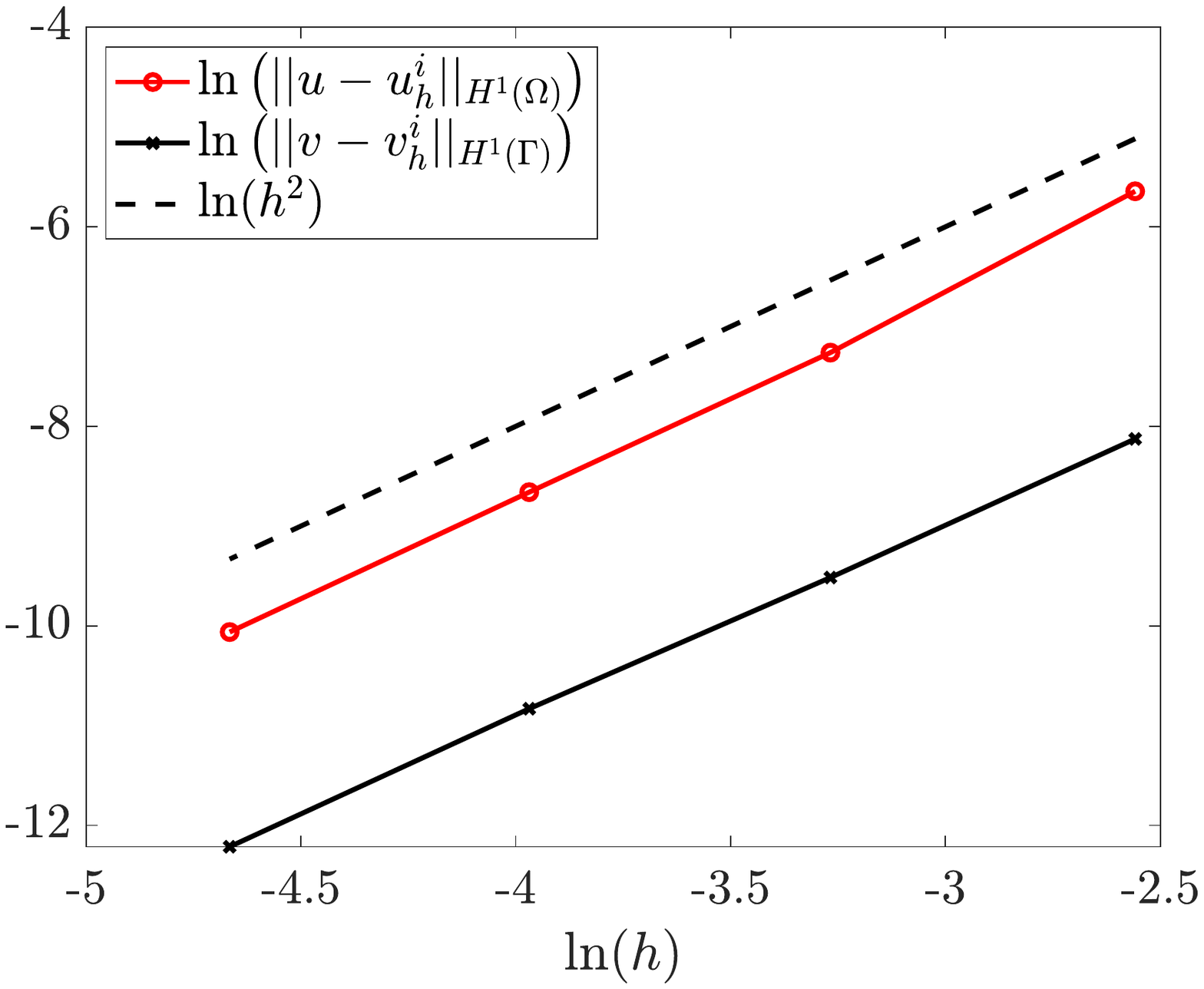}
        \caption{}
    \end{subfigure}
    \caption{Log-log plots of bulk/surface $L^2$-norm errors (left figure) and bulk/surface $H^1$-norm  errors (right figure) for the model \eqref{eqn:bulk}--\eqref{eqn:surface-ic} with linear bulk-surface coupling $h(u,v)=u-v$. The exact solutions are $u=e^te^{-x(x-1)-y(y-1)}$ and $v = e^te^{-x(x-1)-y(y-1)}(1+x(1-2x)+y(1-2y))$ with final time $T=0.1$ in the sphere of $R=1$.}
    \label{fig:linear-bulk-surf-errors}
\end{figure}

In Fig.~\ref{fig:linear-bulk-surf-errors}, we observe second-order convergence for both the $L^2$- and $H^1$-norm errors in the bulk and on the surface. In contrast, the bulk/surface $H^1$-norm errors in the cut finite element approach \cite[Fig. 4]{Burman_2015} are only first order accurate. Furthermore, compared to \cite[Fig. 4]{Burman_2015}, the approach based on DPM in this work gives much smaller $L^2$-norm errors both in the bulk and on the surface.

In the meantime, we notice that in Fig.~\ref{fig:linear-bulk-surf-errors}, the errors on the surface are smaller than the errors in the bulk, which is different from the results of the models with dynamic boundary conditions, see Figs.~\ref{fig:dynamicBC-T1-errors} and \ref{fig:dynamicBC-T2-errors}. Nevertheless, the second-order convergence rates are recovered in all cases.

\begin{figure}
    \centering
    \begin{subfigure}[b]{0.7\textwidth}
        \includegraphics[width=\textwidth]{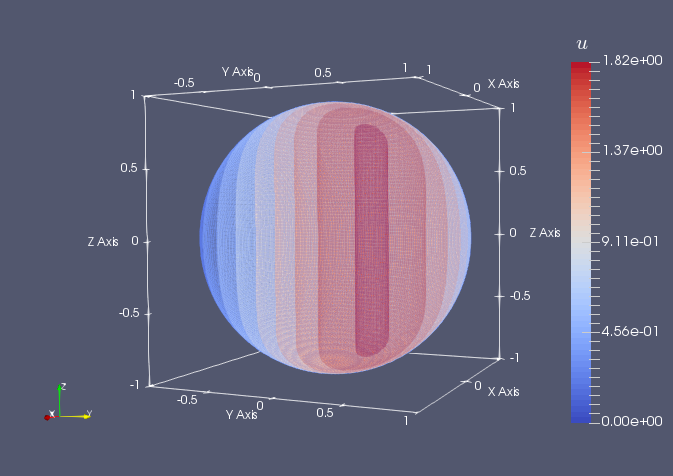}
        \caption{}
    \end{subfigure}
    \begin{subfigure}[b]{0.7\textwidth}
        \includegraphics[width=\textwidth]{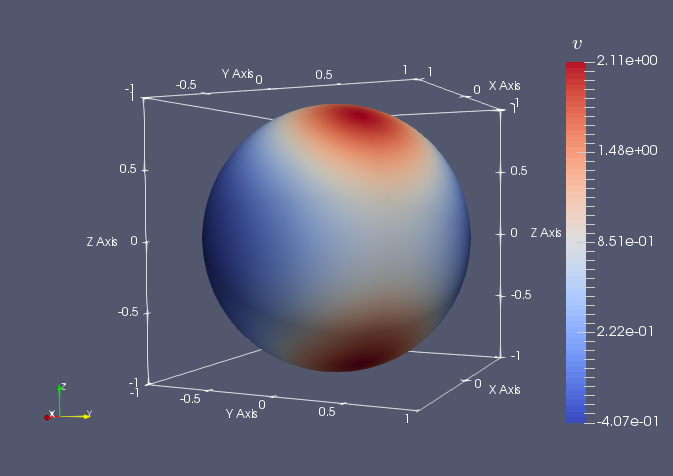}
        \caption{}
    \end{subfigure}
    \caption{3D views of the bulk (top figure) and the surface (bottom figure) approximations from mesh $255\times255\times255$ at $T=0.1$ to the model \eqref{eqn:bulk}--\eqref{eqn:surface-ic} with linear bulk-surface coupling $h(u,v)=u-v$. The exact solutions are $u=e^te^{-x(x-1)-y(y-1)}, v = e^te^{-x(x-1)-y(y-1)}(1+x(1-2x)+y(1-2y))$ in the sphere of $R=1$. }
    \label{fig:linear-bulk-surf-3d-view}
\end{figure}

In Fig. \ref{fig:linear-bulk-surf-3d-view}, we illustrate the solution via the 3D isosurface plots in the top figure and the plot of the surface solution in the bottom figure, obtained on mesh $255\times255\times255$ at final time $T=0.1$. The bottom figure in Fig. \ref{fig:linear-bulk-surf-3d-view} can also be compared to \cite[Fig. 3]{Burman_2015}. In this work, we are able to recover a better resolution of the solution on the surface using the DPM-based algorithms.

%%%%%%%%%%%%%%%%%%%%%%%%%%%%%%%%%%%%%%%%%%%%%%%%%%%%%%%%%%%%%%%%%%%%%%%%%%%%%%%%%%%%%%%%%%%%%%%%%%%%%%%%%%%%
\subsection{Nonlinear Bulk-Surface Coupling}
In this subsection, we demonstrate the numerical results for the
models \eqref{eqn:bulk}--\eqref{eqn:surface-ic} with nonlinear
bulk-surface coupling $h(u,v)=uv$ in the spherical domain of radius
$R=1$. The considered model is motivated by the examples
of the nonlinear bulk-surface coupling from \cite{Elliott_2017,Hansbo_2016}.

\subsubsection{Test 1 for Nonlinear Coupling}

As a first test here, we consider the exact solutions $u(x,y,z,t)=e^te^{-x(x-1)-y(y-1)}$ in the bulk and $v(x,y,z,t) = e^te^{-x(x-1)-y(y-1)}(1+x(1-2x)+y(1-2y))$ on the surface. The motivation to use the same exact solutions as in the linear coupling is that, we can compare the performance of the algorithm for linear/nonlinear bulk-surface coupling and test the robustness of the numerical algorithm based on DPM.

Note that, we do not have exact nonlinear coupling as in \eqref{eqn:coupling} if we use the above exact solutions. Instead, we need to supply a source function $w$ in the coupling, i.e.,
\begin{align}\label{eqn:coupling-with-forcing}
-n\cdot\nabla u = uv + \mathrm{w},\quad (x,y,z,t)\in\Gamma\times\mathbb{R}^+.
\end{align}
Here, the source function $\mathrm{w}$ is computed from the exact solutions $u$ and $v$. The discrete version of \eqref{eqn:coupling-with-forcing} is 
\begin{align}
-n\cdot\nabla u^{i+1} = u^{i+1}v^{i+1}+\mathrm{w}^{i+1}
\end{align}
which can be linearized, if the term $v^{i+1}$ is approximated by either the 2-term approximation \eqref{eqn:2-term-approx}, or the 3-term approximation \eqref{eqn:3-term-approx}.

\begin{table}
\centering
\footnotesize
\sisetup{
  output-exponent-marker = \text{ E},
  exponent-product={},
  retain-explicit-plus
}
\begin{tabular}{c S[table-format=1.4e2] c S[table-format=1.4e2] c S[table-format=1.4e2] c }
\toprule
{$N\times N\times N$} & {$E_{\infty(\Omega)}: u $} & Rate   & {$E_{L^2(\Omega)}: u$} & Rate & {$E_{H^1(\Omega)}: u$} & Rate\\
\midrule
$ 31 \times 31 \times 31$      & 2.1745E-03 & \textemdash & 1.4171E-03 & \textemdash & 4.5547E-03 & \textemdash\\
$ 63 \times 63 \times 63$      & 6.6223E-04 & 1.72        & 4.0246E-04 & 1.82        & 1.1225E-03 & 2.02\\
$ 127 \times 127 \times 127$   & 1.8343E-04 & 1.85        & 1.0690E-04 & 1.91        & 3.1453E-04 & 1.84\\\
$ 255 \times 255 \times 255$   & 4.6212E-05 & 1.99        & 2.7124E-05 & 1.98        & 7.8796E-05 & 2.00\\
\midrule
{$N\times N\times N$} & {$E_{\infty(\Gamma)}: v$} & Rate   & {$E_{L^2(\Gamma)}: v$} & Rate & {$E_{H^1(\Gamma)}: v$} & Rate\\
\midrule
$ 31 \times 31 \times 31$      & 1.2462E-04 & \textemdash & 1.7529E-04 & \textemdash & 3.6232E-04 & \textemdash\\
$ 63 \times 63 \times 63$      & 5.6149E-05 & 1.15        & 7.6767E-05 & 1.19        & 1.5940E-04 & 1.18\\
$ 127 \times 127 \times 127$   & 1.7791E-05 & 1.66        & 2.3819E-05 & 1.69        & 4.9687E-05 & 1.68\\
$ 255 \times 255 \times 255$   & 4.6461E-06 & 1.94        & 6.3801E-06 & 1.90        & 1.2982E-05 & 1.94\\
\midrule
{$N\times N\times N$} & {$E_{\infty(\Omega)}:\nabla_xu$} & Rate   & {$E_{\infty(\Omega)}:\nabla_y u$} & Rate & {$E_{\infty(\Omega)}:\nabla_z u$} & Rate\\
\midrule
$ 31 \times 31 \times 31$      & 8.5460E-03 & \textemdash & 8.5460E-03 & \textemdash & 4.2336E-03 & \textemdash\\
$ 63 \times 63 \times 63$      & 1.0922E-03 & 2.97        & 1.0922E-03 & 2.97        & 1.3177E-03 & 1.68\\
$ 127 \times 127 \times 127$   & 3.1575E-04 & 1.79        & 3.1575E-04 & 1.79        & 3.6751E-04 & 1.84\\
$ 255 \times 255 \times 255$   & 7.5933E-05 & 2.06        & 7.5933E-05 & 2.06        & 9.9683E-05 & 1.88\\
\bottomrule
\end{tabular}
\caption{Convergence of the $\infty$-, $L^2$- and $H^1$-norm errors of the solutions in the bulk/surface, and the ${\infty}$-norm errors of gradients in the bulk for the model \eqref{eqn:bulk}--\eqref{eqn:surface-ic} with nonlinear bulk-surface coupling. The exact solutions are $u=e^te^{-x(x-1)-y(y-1)}$ and $v = e^te^{-x(x-1)-y(y-1)}(1+x(1-2x)+y(1-2y))$ until final time $T=0.1$ in the sphere of $R=1$. The number of spherical harmonics for terms $u$, $v$ and $u_{rr}$ is 529 per each term, and $v^{i+1}\approx v^i+\Delta t v^i_t$.}
\label{table:nonlinear-coupling-Vt-T1}
\end{table}

\begin{table}
\centering
\footnotesize
\sisetup{
  output-exponent-marker = \text{ E},
  exponent-product={},
  retain-explicit-plus
}
\begin{tabular}{c S[table-format=1.4e2] c S[table-format=1.4e2] c S[table-format=1.4e2] c }
\toprule
{$N\times N\times N$} & {$E_{\infty(\Omega)}: u $} & Rate   & {$E_{L^2(\Omega)}: u$} & Rate & {$E_{H^1(\Omega)}: u$} & Rate\\
\midrule
$ 31 \times 31 \times 31$      & 1.2442E-03 & \textemdash & 9.5589E-04 & \textemdash & 3.6512E-03 & \textemdash\\
$ 63 \times 63 \times 63$      & 3.0007E-04 & 2.05        & 2.2875E-04 & 2.06        & 7.1388E-04 & 2.35\\
$ 127 \times 127 \times 127$   & 7.2477E-05 & 2.05        & 5.4911E-05 & 2.06        & 1.7713E-04 & 2.01\\
$ 255 \times 255 \times 255$   & 1.7687E-05 & 2.03        & 1.3390E-05 & 2.04        & 4.3475E-05 & 2.03\\
\midrule
{$N\times N\times N$} & {$E_{\infty(\Gamma)}: v$} & Rate   & {$E_{L^2(\Gamma)}: v$} & Rate & {$E_{H^1(\Gamma)}: v$} & Rate\\
\midrule
$ 31 \times 31 \times 31$      & 1.1314E-04 & \textemdash & 1.3360E-04 & \textemdash & 2.9799E-04 & \textemdash\\
$ 63 \times 63 \times 63$      & 2.9023E-05 & 1.96        & 3.4459E-05 & 1.96        & 7.6480E-05 & 1.96\\
$ 127 \times 127 \times 127$   & 7.7800E-06 & 1.90        & 9.2502E-06 & 1.90        & 2.0684E-05 & 1.89\\
$ 255 \times 255 \times 255$   & 1.9908E-06 & 1.97        & 2.3611E-06 & 1.97        & 5.2992E-06 & 1.96\\
\midrule
{$N\times N\times N$} & {$E_{\infty(\Omega)}:\nabla_xu$} & Rate   & {$E_{\infty(\Omega)}:\nabla_y u$} & Rate & {$E_{\infty(\Omega)}:\nabla_z u$} & Rate\\
\midrule
$ 31 \times 31 \times 31$      & 8.5799E-03 & \textemdash & 8.5799E-03 & \textemdash & 2.4914E-03 & \textemdash\\
$ 63 \times 63 \times 63$      & 1.1211E-03 & 2.94        & 1.1211E-03 & 2.94        & 4.0347E-04 & 2.63\\
$ 127 \times 127 \times 127$   & 2.9313E-04 & 1.94        & 2.9313E-04 & 1.94        & 1.0618E-04 & 1.93\\
$ 255 \times 255 \times 255$   & 7.6440E-05 & 1.94        & 7.6440E-05 & 1.94        & 2.6384E-05 & 2.01\\
\bottomrule
\end{tabular}
\caption{Convergence of the $\infty$-, $L^2$- and $H^1$-norm errors of the solutions in the bulk/surface, and the ${\infty}$-norm errors of gradients in the bulk for the model \eqref{eqn:bulk}--\eqref{eqn:surface-ic} with nonlinear bulk-surface coupling. The exact solutions are $u=e^te^{-x(x-1)-y(y-1)}$ and $v = e^te^{-x(x-1)-y(y-1)}(1+x(1-2x)+y(1-2y))$ until final time $T=0.1$ in the sphere of $R=1$. The number of spherical harmonics for terms $u$, $v$ and $u_{rr}$ is 529 per each term, and $v^{i+1}\approx v^i+\Delta t v^i_t+\Delta t^2 v^i_{tt}/2$.}
\label{table:nonlinear-coupling-Vtt-T1}
\end{table}

The errors in Table~\ref{table:nonlinear-coupling-Vt-T1} correspond to 2-term approximation \eqref{eqn:2-term-approx} for $v^{i+1}$. With the 2-term approximation \eqref{eqn:2-term-approx}, the ${\infty}$-, $L^2$-, $H^1$-norm errors of solutions in the bulk and ${\infty}$-norm errors of the gradients in the bulk all obey optimal second-order convergence. Meanwhile, the ${\infty}$-, $L^2$-, $H^1$-norm errors of the solution on the surface give sub-optimal second-order accuracy in the first few coarser meshes. However, the second-order accuracy is recovered on finer meshes, e.g., on mesh $255\times255\times255$.

In Table \ref{table:nonlinear-coupling-Vtt-T1}, we adopt the 3-term approximation \eqref{eqn:3-term-approx} for the $v^{i+1}$ term. There are slight improvements of the accuracy for the solutions and gradients in the bulk. In the meantime, the accuracy of the solution on the surface is improved and second-order accuracy is recovered even on coarser meshes. Note that, there are barely any added computational cost, when one switches from using 2-term approximation \eqref{eqn:2-term-approx} to 3-term approximation \eqref{eqn:3-term-approx} for the $v^{i+1}$ term.

Moreover, the errors in the bulk and on the surface for the nonlinear
bulk-surface coupling in Table \ref{table:nonlinear-coupling-Vtt-T1}
are very similar to the errors for linear bulk-surface coupling in
Table \ref{table:linear-coupling}. This illustrates the robustness of
the designed DPM-based algorithm. Also note that, the algorithm for
nonlinear coupling is very similar to the ones for linear
coupling. The only difference is that the matrix $C'$ in the least
square system \eqref{eqn:LS-system-2} needs to be updated and the resulting normal matrices need to be inverted
at each time step, which makes it more expensive. Hence, it is
advantageous to use the reduced BEPs as it is done in the current work.

\begin{figure}
    \centering
    \begin{subfigure}[b]{0.49\textwidth}
        \includegraphics[width=\textwidth]{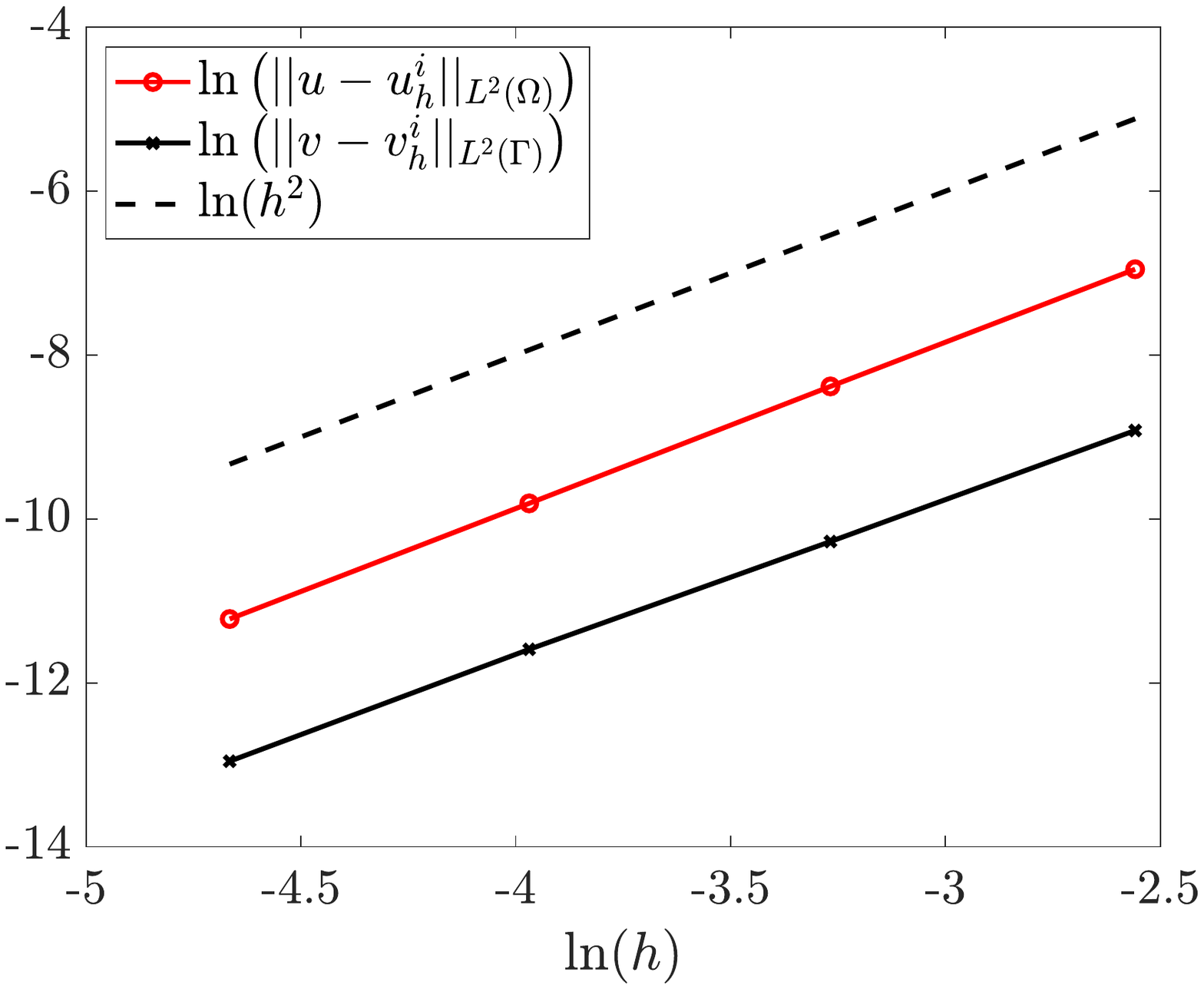}
        \caption{}
    \end{subfigure}
    \begin{subfigure}[b]{0.49\textwidth}
        \includegraphics[width=\textwidth]{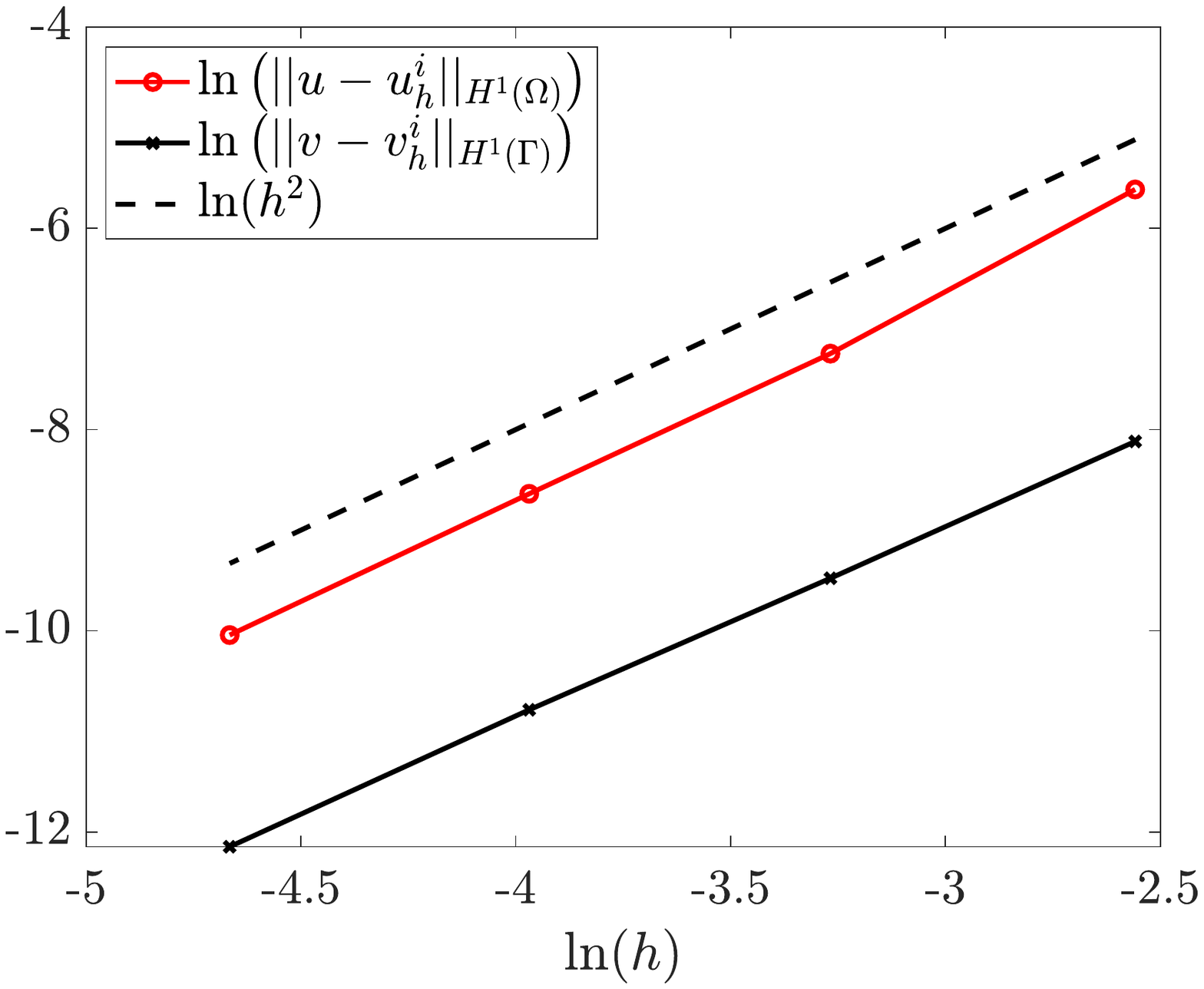}
        \caption{}
    \end{subfigure}
    \caption{Log-log plots of bulk/surface $L^2$-norm errors (left figure) and bulk/surface $H^1$-norm errors (right figure) for the model \eqref{eqn:bulk}--\eqref{eqn:surface-ic} with nonlinear bulk-surface coupling $h(u,v)=uv$. The exact solutions are $u=e^te^{-x(x-1)-y(y-1)}$ and $v = e^te^{-x(x-1)-y(y-1)}(1+x(1-2x)+y(1-2y))$ until final time $T=0.1$ in the sphere of $R=1$, and $v^{i+1}\approx v^i+\Delta t v^i_t+\Delta t^2 v^i_{tt}/2$.}
    \label{fig:nonlinear-coupling-T1-errors}
\end{figure}

Again, the plots of $L^2$- and $H^1$-norm errors of the nonlinear coupling in Fig. \ref{fig:nonlinear-coupling-T1-errors} are similar to the plots of errors in the linear coupling, see Fig.~\ref{fig:linear-bulk-surf-errors}. In Fig. \ref{fig:nonlinear-coupling-T1-3d-view}, there is no observable difference in the isosurface plots in the bulk and the surface plots from the plots for the linear-coupling case, obtained on mesh $255\times255\times255$ at final time $T=0.1$, see Fig.~\ref{fig:linear-bulk-surf-3d-view}.

\begin{figure}
    \centering
    \begin{subfigure}[b]{0.7\textwidth}
        \includegraphics[width=\textwidth]{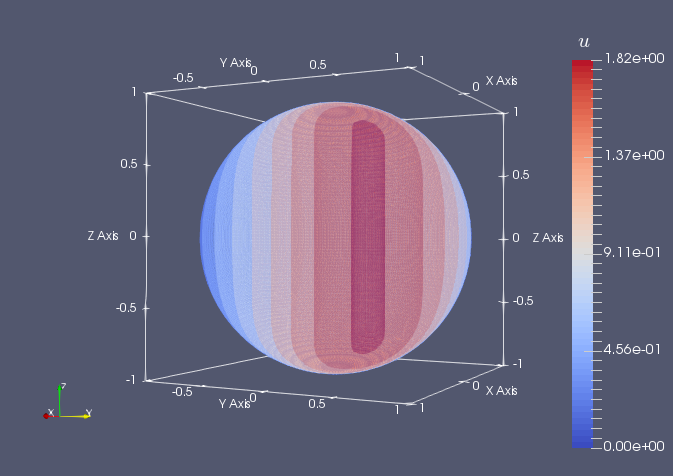}
        \caption{}
    \end{subfigure}
    \begin{subfigure}[b]{0.7\textwidth}
        \includegraphics[width=\textwidth]{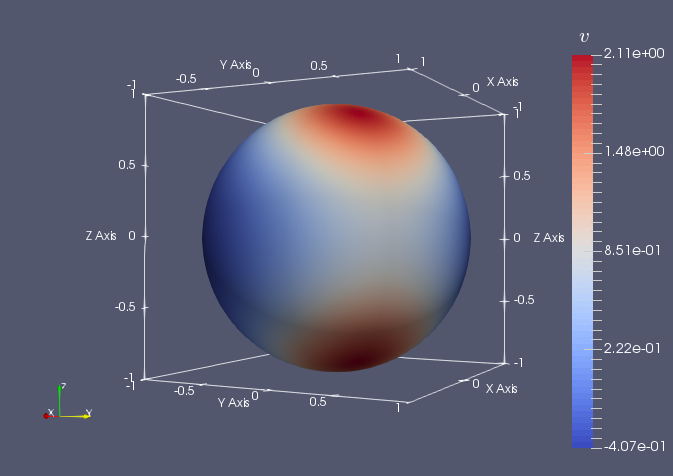}
        \caption{}
    \end{subfigure}
    \caption{3D views of the bulk (top figure) and surface (bottom figure) approximations from mesh $255\times255\times255$ at $T=0.1$ to the model \eqref{eqn:bulk}--\eqref{eqn:surface-ic} of nonlinear bulk-surface coupling $h(u,v)=uv$. The exact solutions are $u=e^te^{-x(x-1)-y(y-1)}, v = e^te^{-x(x-1)-y(y-1)}(1+x(1-2x)+y(1-2y))$in the sphere of $R=1$, and $v^{i+1}\approx v^i+\Delta t v^i_t+\Delta t^2 v^i_{tt}/2$.}
    \label{fig:nonlinear-coupling-T1-3d-view}
\end{figure}

\subsubsection{Test 2 for Nonlinear Coupling}

In this subsection, we employ the exact solutions $u=v=e^t\sin(x)\sin(2y)\sin(3z)$ both in the bulk and on the surface, as the ones we use in the second test of the models with dynamic boundary conditions. Again, second order accuracy are observed in Tables~\ref{table:nonlinear-coupling-Vt-T2} and \ref{table:nonlinear-coupling-Vtt-T2} for the ${\infty}$-, $L^2$- and $H^1$-norm errors. It is also interesting to notice that for this pair of exact solutions, 2-term approximation \eqref{eqn:2-term-approx} and 3-term approximation \eqref{eqn:3-term-approx} of the $v^{i+1}$ term give very similar convergence results, which again illustrates the robustness of the proposed DPM-based algorithms.

\begin{table}
\centering
\footnotesize
\sisetup{
  output-exponent-marker = \text{ E},
  exponent-product={},
  retain-explicit-plus
}
\begin{tabular}{c S[table-format=1.4e2] c S[table-format=1.4e2] c S[table-format=1.4e2] c }
\toprule
{$N\times N\times N$} & {$E_{\infty(\Omega)}: u $} & Rate   & {$E_{L^2(\Omega)}: u$} & Rate & {$E_{H^1(\Omega)}: u$} & Rate\\
\midrule
$ 31 \times 31 \times 31$      & 1.3914E-03 & \textemdash & 1.0543E-03 & \textemdash & 5.0230E-03 & \textemdash\\
$ 63 \times 63 \times 63$      & 3.6065E-04 & 1.95        & 2.6657E-04 & 1.98        & 1.2706E-03 & 1.98\\
$ 127 \times 127 \times 127$   & 9.4354E-05 & 1.93        & 6.6522E-05 & 2.00        & 3.1710E-04 & 2.00\\\
$ 255 \times 255 \times 255$   & 2.3408E-05 & 2.01        & 1.6608E-05 & 2.00        & 7.9165E-05 & 2.00\\
\midrule
{$N\times N\times N$} & {$E_{\infty(\Gamma)}: v$} & Rate   & {$E_{L^2(\Gamma)}: v$} & Rate & {$E_{H^1(\Gamma)}: v$} & Rate\\
\midrule
$ 31 \times 31 \times 31$      & 2.1011E-05 & \textemdash & 2.7089E-05 & \textemdash & 9.9342E-05 & \textemdash\\
$ 63 \times 63 \times 63$      & 6.5900E-06 & 1.67        & 7.7509E-06 & 1.81        & 2.6332E-05 & 1.92\\
$ 127 \times 127 \times 127$   & 1.8191E-06 & 1.86        & 2.3303E-06 & 1.73        & 7.1906E-06 & 1.87\\
$ 255 \times 255 \times 255$   & 4.6581E-07 & 1.97        & 6.0249E-07 & 1.95        & 1.8064E-06 & 1.99\\
\midrule
{$N\times N\times N$} & {$E_{\infty(\Omega)}:\nabla_xu$} & Rate   & {$E_{\infty(\Omega)}:\nabla_y u$} & Rate & {$E_{\infty(\Omega)}:\nabla_z u$} & Rate\\
\midrule
$ 31 \times 31 \times 31$      & 3.3321E-03 & \textemdash & 4.5730E-03 & \textemdash & 4.9888E-03 & \textemdash\\
$ 63 \times 63 \times 63$      & 8.3625E-04 & 1.99        & 1.0630E-03 & 2.11        & 1.1784E-03 & 2.08\\
$ 127 \times 127 \times 127$   & 2.0870E-04 & 2.00        & 2.6904E-04 & 1.98        & 2.8762E-04 & 2.03\\
$ 255 \times 255 \times 255$   & 5.2251E-05 & 2.00        & 6.7303E-05 & 2.00        & 7.3246E-05 & 1.97\\
\bottomrule
\end{tabular}
\caption{Convergence of the $\infty$-, $L^2$- and $H^1$-norm errors of the solutions in the bulk/surface, and the ${\infty}$-norm errors of gradients in the bulk for the model \eqref{eqn:bulk}--\eqref{eqn:surface-ic} with nonlinear bulk-surface coupling. The exact solutions are $u=v=e^t\sin(x)\sin(2y)\sin(3z)$ until final time $T=0.1$ in the sphere of $R=1$. The number of spherical harmonics for terms $u$, $v$ and $u_{rr}$ is 400 per each term and $v^{i+1}\approx v^i+\Delta t v^i_t$.}
\label{table:nonlinear-coupling-Vt-T2}
\end{table}

\begin{table}
\centering
\footnotesize
\sisetup{
  output-exponent-marker = \text{ E},
  exponent-product={},
  retain-explicit-plus
}
\begin{tabular}{c S[table-format=1.4e2] c S[table-format=1.4e2] c S[table-format=1.4e2] c }
\toprule
{$N\times N\times N$} & {$E_{\infty(\Omega)}: u $} & Rate   & {$E_{L^2(\Omega)}: u$} & Rate & {$E_{H^1(\Omega)}: u$} & Rate\\
\midrule
$ 31 \times 31 \times 31$      & 1.5302E-03 & \textemdash & 1.0646E-03 & \textemdash & 5.0709E-03 & \textemdash\\
$ 63 \times 63 \times 63$      & 4.1025E-04 & 1.90        & 2.6970E-04 & 1.98        & 1.2861E-03 & 1.98\\
$ 127 \times 127 \times 127$   & 1.0691E-04 & 1.94        & 6.7408E-05 & 2.00        & 3.2158E-04 & 2.00\\
$ 255 \times 255 \times 255$   & 2.6478E-05 & 2.01        & 1.6839E-05 & 2.00        & 8.0352E-05 & 2.00\\
\midrule
{$N\times N\times N$} & {$E_{\infty(\Gamma)}: v$} & Rate   & {$E_{L^2(\Gamma)}: v$} & Rate & {$E_{H^1(\Gamma)}: v$} & Rate\\
\midrule
$ 31 \times 31 \times 31$      & 3.4599E-05 & \textemdash & 4.8766E-05 & \textemdash & 1.2807E-04 & \textemdash\\
$ 63 \times 63 \times 63$      & 9.7036E-06 & 1.83        & 1.3621E-05 & 1.84        & 3.5923E-05 & 1.83\\
$ 127 \times 127 \times 127$   & 2.6424E-06 & 1.88        & 3.6432E-06 & 1.90        & 9.6663E-06 & 1.89\\
$ 255 \times 255 \times 255$   & 6.7367E-07 & 1.97        & 9.3640E-07 & 1.96        & 2.4564E-06 & 1.98\\
\midrule
{$N\times N\times N$} & {$E_{\infty(\Omega)}:\nabla_xu$} & Rate   & {$E_{\infty(\Omega)}:\nabla_y u$} & Rate & {$E_{\infty(\Omega)}:\nabla_z u$} & Rate\\
\midrule
$ 31 \times 31 \times 31$      & 3.4453E-03 & \textemdash & 4.6325E-03 & \textemdash & 5.0779E-03 & \textemdash\\
$ 63 \times 63 \times 63$      & 8.8748E-04 & 1.96        & 1.0970E-03 & 2.08        & 1.1997E-03 & 2.08\\
$ 127 \times 127 \times 127$   & 2.1871E-04 & 2.02        & 2.7758E-04 & 1.98        & 3.0294E-04 & 1.99\\
$ 255 \times 255 \times 255$   & 5.5442E-05 & 1.98        & 6.9999E-05 & 1.99        & 7.5981E-05 & 2.00\\
\bottomrule
\end{tabular}
\caption{Convergence of the $\infty$-, $L^2$- and $H^1$-norm errors of the solutions in the bulk/surface, and the ${\infty}$-norm errors of gradients in the bulk for the model \eqref{eqn:bulk}--\eqref{eqn:surface-ic} with nonlinear bulk-surface coupling. The exact solutions are $u=v=e^t\sin(x)\sin(2y)\sin(3z)$ until final time $T=0.1$ in the sphere of $R=1$. The number of spherical harmonics for terms $u$, $v$ and $u_{rr}$ is 400 per each term and $v^{i+1}\approx v^i+\Delta t v^i_t+\Delta t^2 v^i_{tt}/2$.}
\label{table:nonlinear-coupling-Vtt-T2}
\end{table}

\begin{figure}
    \centering
    \begin{subfigure}[b]{0.49\textwidth}
        \includegraphics[width=\textwidth]{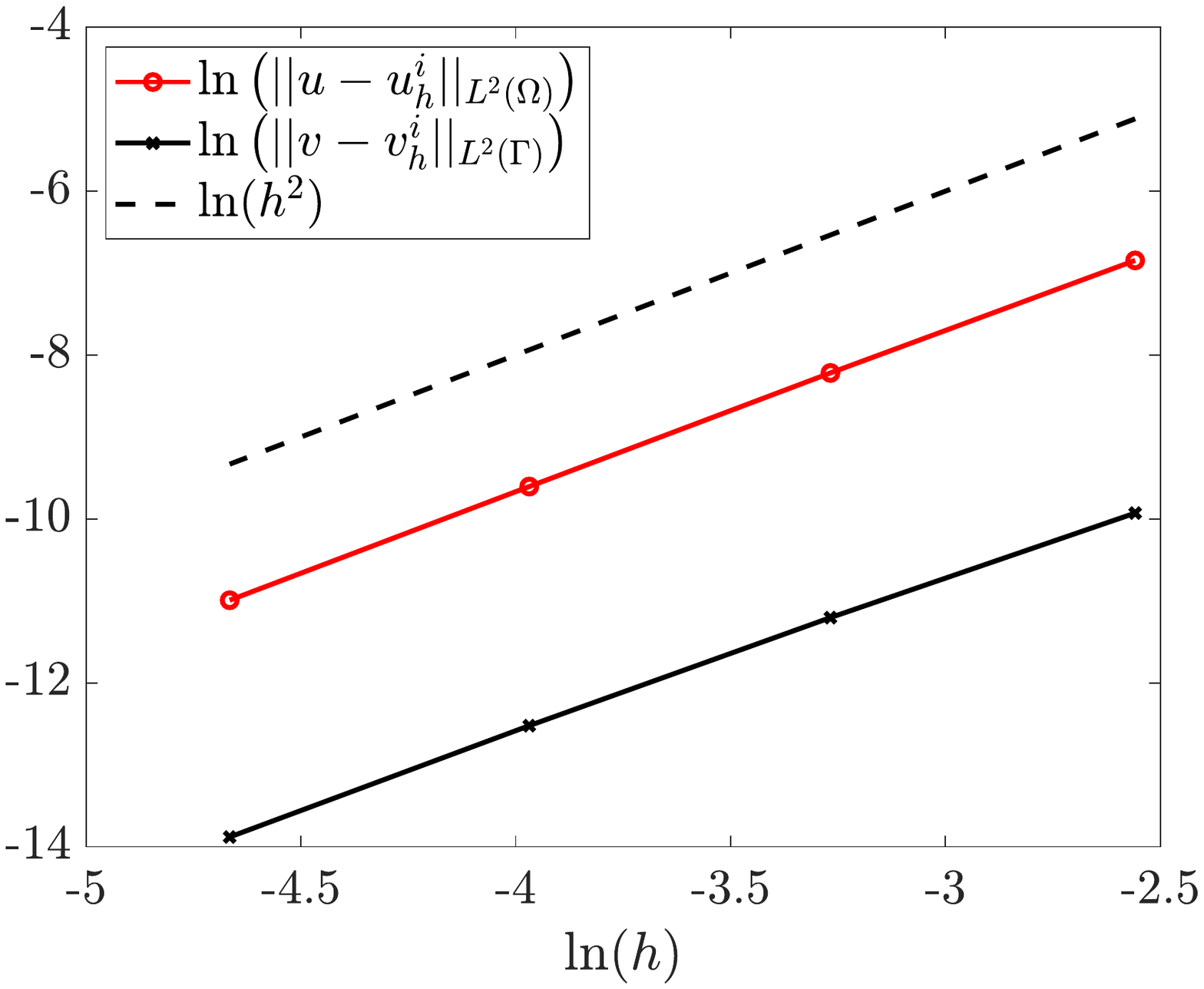}
        \caption{}
    \end{subfigure}
    \begin{subfigure}[b]{0.49\textwidth}
        \includegraphics[width=\textwidth]{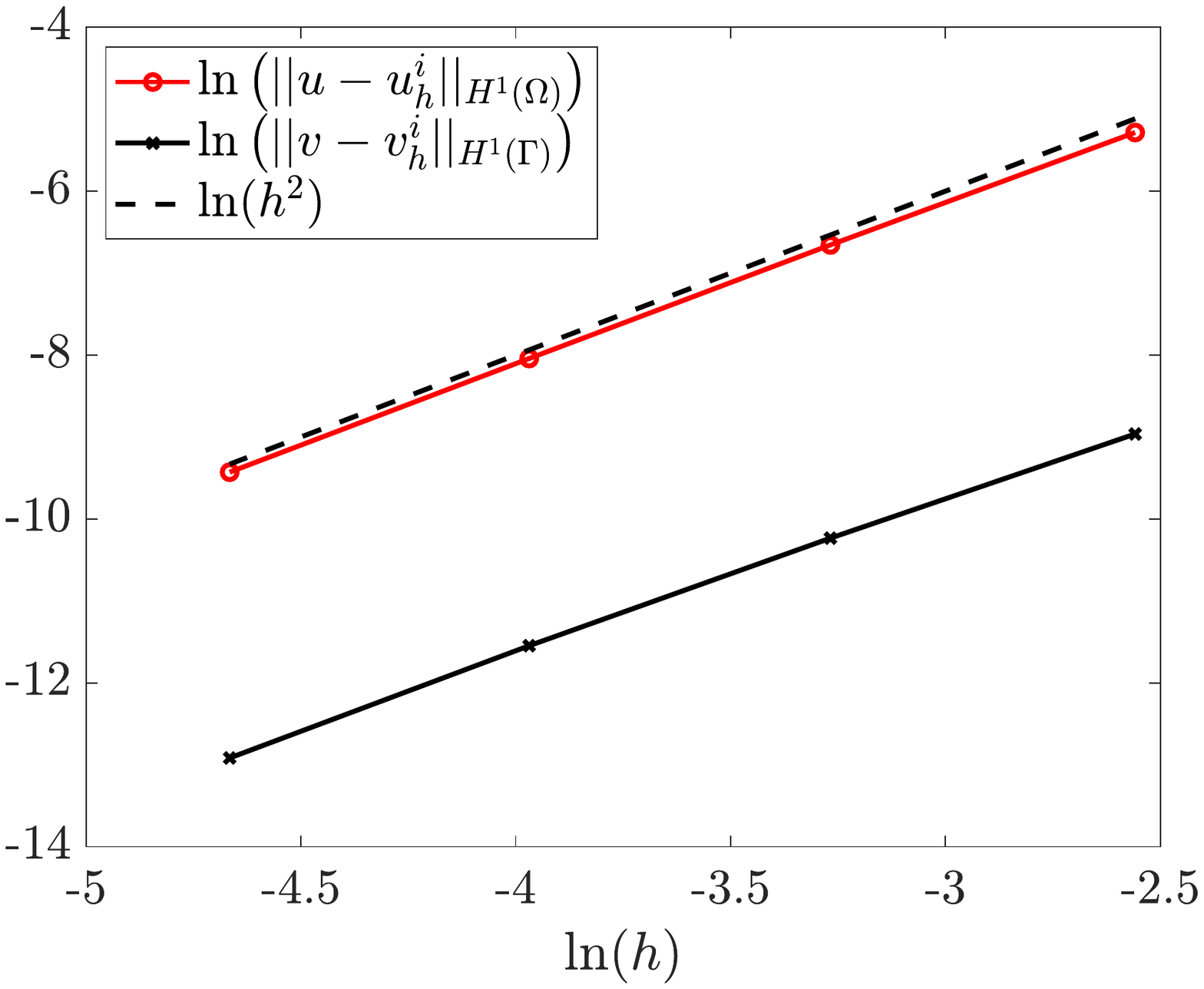}
        \caption{}
    \end{subfigure}
    \caption{Log-log plots of bulk/surface $L^2$-norm errors (left figure) and bulk/surface $H^1$-norm errors (right figure) for the model \eqref{eqn:bulk}--\eqref{eqn:surface-ic} with nonlinear bulk-surface coupling $h(u,v)=uv$. The exact solutions are $u=v= e^t\sin(x)\sin(2y)\sin(3z)$ until final time $T=0.1$ in the sphere of $R=1$, and $v^{i+1}\approx v^i+\Delta t v^i_t+\Delta t^2 v^i_{tt}/2$.}
    \label{fig:nonlinear-coupling-T2-errors}
\end{figure}

In Fig. \ref{fig:nonlinear-coupling-T2-errors}, we observe second order convergence of $L^2$- and $H^1$-norm errors both in the bulk and on the surface. Unlike the numerical results for dynamic boundary condition in Fig.~\ref{fig:dynamicBC-T2-errors}, the $L^2$- and $H^1$-norm errors in the bulk are larger than the errors on the surface in Fig. \ref{fig:nonlinear-coupling-T2-errors}, which is also observed in the first test of the nonlinear coupling in Fig. \ref{fig:nonlinear-coupling-T1-errors}, as well as in the test of linear bulk-surface coupling in Fig. \ref{fig:linear-bulk-surf-errors}.

In Fig. \ref{fig:nonlinear-coupling-T2-3d-view}, we present the 3D views of the isosurface plots in the bulk and the plot of surface solutions, obtained on mesh $255\times255\times255$ at final time $T=0.1$.

\begin{figure}
    \centering
    \begin{subfigure}[b]{0.7\textwidth}
        \includegraphics[width=\textwidth]{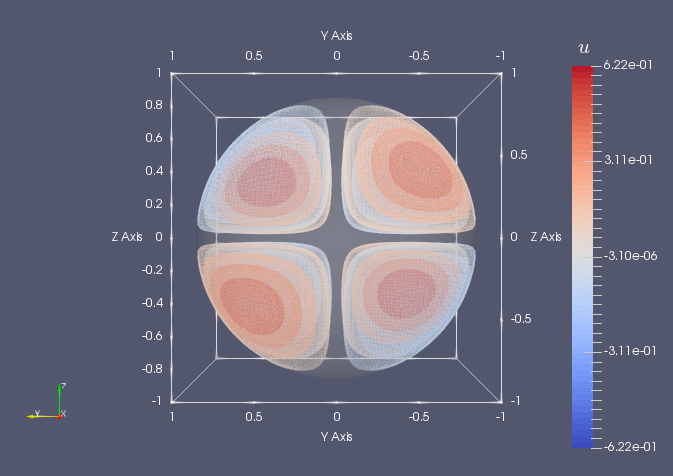}
        \caption{}
    \end{subfigure}
    \begin{subfigure}[b]{0.7\textwidth}
        \includegraphics[width=\textwidth]{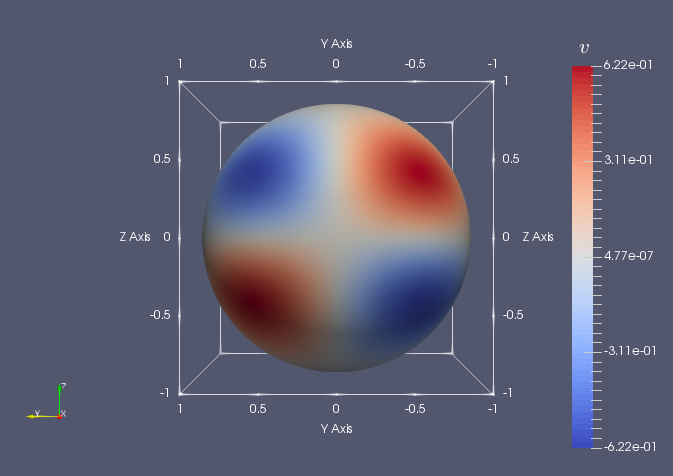}
        \caption{}
    \end{subfigure}
    \caption{3D views of the bulk (top figure) and surface (bottom figure) approximations from mesh $255\times255\times255$ at $T=0.1$ to the model \eqref{eqn:bulk}--\eqref{eqn:surface-ic} of nonlinear bulk-surface coupling $h(u,v)=uv$. The exact solutions are $u=v=e^t\sin(x)\sin(2y)\sin(3z)$ in the sphere of $R=1$, and $v^{i+1}\approx v^i+\Delta t v^i_t+\Delta t^2 v^i_{tt}/2$.}
    \label{fig:nonlinear-coupling-T2-3d-view}
\end{figure}

\subsection{Condition Numbers}
In Table~\ref{table:condition-number}, we demonstrate the condition numbers of the normal matrices from the resulting algebraic systems \eqref{eqn:BEP-reformulated} in \textit{Case 1}, \eqref{eqn:linear-coupling-reformulated-bep} in \textit{Case 2: a)}, and \eqref{eqn:LS-system}--\eqref{eqn:LS-system-2} in \textit{Case 2: b)}. Note that in \textit{Case 1)} and \textit{Case 2: a)}, the normal matrices are pre-computed outside of the time loop, while in \textit{Case 2: b)}, the normal matrices need to be assembled at each time step due to the nonlinearity. Thus, in Table~\ref{table:condition-number}, the condition numbers for \textit{Case 2: b)} are computed only at the first and last time steps. 

\begin{table}
\centering
\footnotesize
\sisetup{
  output-exponent-marker = \text{ E},
  exponent-product={},
  retain-explicit-plus
}
\begin{tabular}{c S[table-format=1.4e2] S[table-format=1.4e2] S[table-format=1.4e2] S[table-format=1.4e2] S[table-format=1.4e2]}
\toprule
{$N$} & {Case 1 Test 2} & {Case 2:a} & {Case 2:b Test 1 } & {Case 2:b Test 1 } & {Case 2:b Test 1}\\
{} & {(at $t=0$)} & {(at $t=0$)} & {(w/o $v_{tt}$ at $t=0$)} & {(w/ $v_{tt}$ at $t=0$)} & {(w/ $v_{tt}$ at $t=0.1$)}\\
\midrule
31  & 3.4838e+01 & 1.7962e+04 & 2.7494e+04 & 2.7565e+04 & 2.2402e+04\\
63  & 4.4475e+01 & 1.7659e+03 & 3.7456e+03 & 3.7483e+03 & 3.8275e+03\\
127 & 3.2709e+01 & 3.4449e+02 & 8.3878e+02 & 8.3901e+02 & 7.5131e+02\\
255 & 4.0182e+01 & 6.9272e+02 & 9.1638e+02 & 9.1639e+02 & 9.0053e+02\\
\bottomrule
\end{tabular}
\caption{Condition number of the normal matrices of BEP (notation ``w/o $v_{tt}$'' denotes 2-term approximation \eqref{eqn:2-term-approx} and notation ``w/ $v_{tt}$'' denotes 3-term approximation \eqref{eqn:3-term-approx}).}
\label{table:condition-number}
\end{table}

Furthermore, we verified that condition numbers of the normal matrices remain in similar magnitude over time for the nonlinear models in \textit{Case 2: b)}. See the last two columns in Table~\ref{table:condition-number}.

%%%%%%%%%%%%%%%%%%%%%%%%%%%%%%%%%%%%%%%%%%%%%%%%%%%%%%%%%%%%%%%%%%%%%%%%%%%%%%%%%%%%%%%%%%%%%%%%%%%%%%%%%%%%%%%%%%%%%%%%%%
\section*{Acknowledgements}
The authors wish to thank P. Bowman and M. Cuma for assistance in computing facility, the CHPC at the University of Utah for providing computing allocations. The authors are also grateful to the referees for their most valuable suggestions. Yekaterina Epshteyn also acknowledges partial support of Simons Foundation Grant No. 415673.

%%%%%%%%%%%%%%%%%%%%%%%%%%%%%%%%%%%%%%%%%%%%%%%%%%%%%%%%%%%%%%%%%%%%%%%%%%%%%%%%%%%%%%%%%%%%%%%%%%%%%%%%%%%%%%%%%%%%%%%%%%%
% BibTeX users please use one of
\bibliographystyle{plainnat}
\bibliography{Revision_ArXiv_DynamicBcBulkSurface3D.bib}   % name your BibTeX data base

\end{document}